\newtheorem{theorem}{Theorem}[section]
\newtheorem{Theorem}{Theorem}
\newtheorem{lemma}[theorem]{Lemma}
\newtheorem{proposition}[theorem]{Proposition}
\newtheorem{Conjecture}[Theorem]{Conjecture}
\theoremstyle{definition}
\newtheorem{definition}[theorem]{Definition}
\newtheorem{question}{Question}[section]
\theoremstyle{remark}
\def\F{\mathbb{F}}
\def\Z{\mathbb{Z}}
\def\bbT{\mathbb{T}}
\def\cC{\mathcal{C}}
\def\cC{\mathcal{C}}
\def\cH{\mathcal{H}}
\def\cR{\mathcal R}
\def\cU{\mathcal{U}}
\def\cV{\mathcal{V}}
\def\CFKUV{\CFK_\cR} 
\newcommand{\simeqd}{\mathrel{\rotatebox[origin=c]{-180}{$\simeq$}}}
\def\CFh{\widehat{\operatorname{CF}}}
\def\HFh{\widehat{\operatorname{HF}}}
\def\CFK{\operatorname{CFK}}
\def\CFKi{\operatorname{CFK}^{\infty}}
\def\CFKh{\widehat{\operatorname{CFK}}}
\def\HFKh{\widehat{\operatorname{HFK}}}
\def\CFAh{\widehat{\operatorname{CFA}}}
\def\CFDh{\widehat{\operatorname{CFD}}}
\def\CFDDh{\widehat{\operatorname{CFDD}}}
\def\CFDAh{\widehat{\operatorname{CFDA}}}
\def\CFAAh{\widehat{\operatorname{CFAA}}}
 \DeclareMathOperator{\gr}{gr}
\def\muv{\hspace{.25cm} \text{mod} \hspace{.15cm}(\cU, \cV)}
\definecolor{darkgreen}{rgb}{0,0.5,0}
\definecolor{purple}{rgb}{0.5,0,0.5}
\author[]{Sally Collins}{\thanks{The author was supported by NSF GRFP grant DGE-2039655, and by NSF grant DMS-1928930 while in residence at the Simons Laufer Mathematical Sciences Institute in Berkeley, California during the Fall 2022 semester}}
\address {School of Mathematics, Georgia Institute of Technology, Atlanta, GA 30332}
\email{sallycollins@gatech.edu}
\title{Homology cobordism, smooth concordance, and the figure eight knot}
\begin{document}
\maketitle

\begin{abstract}
The $0$-surgeries of two knots $K_1$ and $K_2$ are homology cobordant rel meridians if there exists a $\Z$-homology cobordism $X$ between them such that the two knot meridians are in the same homology class in $H_{1}(X,\mathbb{Z})$. In this paper, we give a pair of rationally slice knots which are not smoothly concordant but whose $0$-surgeries are homology cobordant rel meridians. One knot in the pair is the figure eight knot, which has concordance order two; all previous examples of such pairs of knots are infinite order.

\end{abstract}

\section{Introduction}
\label{sec:1intro}

Two oriented knots $K_1$ and $K_2$ in $S^3$ are {\em smoothly concordant}, denoted $K_1 \sim K_2$, if they cobound a smooth annulus in $S^3\times [0,1]$.  The set of isotopy classes of knots forms a monoid, and that monoid modulo this smooth concordance relation yields an abelian group with binary operation connected sum. We call this group the {\em smooth concordance group}, denoted $\cC$, whose identity element is the class of {\em smoothly slice knots}. There are several variations on {$\cC$}, including the {\em rational concordance group} {$\cC_{\mathbb{Q}}$}, where two knots are equivalent if they cobound a smooth annulus in a rational homology cobordism, and the {\em topological concordance group} {$\cC_{\text{TOP}}$}, where we relax the conditions on the annulus the knots cobound to only require that it be locally flat. 

We let $S^3_n(K)$ denote the 3-manifold that results from performing $n$-framed surgery on a knot $K$ in $S^3$. In general, two 3-manifolds $Y_1$ and $Y_2$ are {\em $\Z$-homology cobordant} if there exists a smooth, compact, oriented 4-manifold $X$ such that $\partial X = (-Y_1) \cup Y_2$ and $H_{*}(X, Y_i; \Z) = 0$ for $i= 1,2$. If we let $Y_1 = S^3_n(K_1)$ and $Y_2 = S^3_n(K_2)$ for some $K_1, K_2$ knots in  $S^3$ and some $n \in \Z$, then the $n$-surgeries of $K_1$ and $K_2$ are {\em $\mathbb{Z}$-homology cobordant rel meridians} if they are $\Z$-homology cobordant by cobordism $X$ and the positively oriented meridians of the $K_i$ are representatives of the same homology class in $H_1(X; \Z)$.

It is a natural question to ask whether there is a relationship between the homeomorphism type of $S^3_n(K)$ and the knot type of $K$; in the Kirby problem list \cite{Kirby}, Akbulut and Kirby conjectured in Problem 1.19 and subsequent remarks that two knots with the same $0$-surgeries will be concordant knots. This conjecture inspires the following question:

\begin{question}\label{bigQ}
If $S^3_0(K_1)$ and $S^3_0(K_2)$ are $\Z$-homology cobordant rel meridians, are $K_1$ and $K_2$ smoothly concordant?
\end{question}

In \cite{Cochran_2013}, Cochran-Franklin-Hedden-Horn give a negative answer to this question by providing a family of topologically slice knots which are not pairwise smoothly concordant, but whose $0$-surguries satisfy the hypothesis of \ref{bigQ}. All knots in this family have infinite order in $\mathcal{C}$, and are distinguishable as non-concordant in $\mathcal{C}$ by Ozsv{\'a}th-Szab{\'o}'s $\tau$ invariant \cite{OSknots} and Rasmussen's $s$ invariant \cite{Rasmussen-thesis}. 

In \cite{Yasui_2015}, Yasui gives a condition on knots such that their images under the Mazur pattern and under a second closely related pattern gives a pair of knots with the same 0-surgery which are not concordant for any orientations. 


In this paper, we will expand on this work in the smooth category. We show the answer to Question 1.1 is ``No"  even when one knot in the pair is torsion in $\mathcal{C}$. 
Interestingly, most of the smooth concordance invariants coming from the knot Floer package vanish for our pair of knots, and both knots are rationally slice. We summarize our results thusly:

\begin{theorem}
\label{theorem:main}
There exist rationally slice knots whose zero surgeries are smoothly $\Z$-homology cobordant rel meridians, but which are not smoothly concordant, and furthermore have different orders in $\mathcal{C}$.
\end{theorem}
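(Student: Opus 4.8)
The plan is to exhibit an explicit pair of knots and verify the three required properties separately. For one knot I would take the figure eight knot $4_1$, which is well-known to be rationally slice (indeed amphichiral, so it has order two in $\cC$, and it is not slice since its Alexander polynomial is not a norm). For the second knot $K$ I would look for a satellite-type construction — most naturally a knot obtained by applying a carefully chosen pattern $P$ (a winding-number-one pattern in a solid torus, so that $0$-surgery behavior is controlled) to $4_1$ or to a related rationally slice companion, so that $K$ is again rationally slice but sits in a different concordance class, ideally of infinite order or at least a different finite order than $4_1$. The key construction constraint is that the pattern be chosen so that $S^3_0(4_1)$ and $S^3_0(K)$ are $\Z$-homology cobordant rel meridians: this is the kind of condition that holds automatically for winding-number-one satellites, since gluing the trace of the pattern cobordism to the surgery solid tori produces a homology cobordism carrying the meridian to the meridian. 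So the three tasks are: (i) build $X$ realizing the homology cobordism rel meridians between the two $0$-surgeries; (ii) show both knots are rationally slice; (iii) show the two knots are not smoothly concordant and have different orders in $\cC$.

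For step (i), I would construct $X$ by hand from a relative cobordism coming from the satellite operation: the Mazur-type or other winding-number-one pattern $P$ yields a concordance-like cobordism in $S^1\times D^2 \times [0,1]$ which, after filling, gives a $\Z$-homology cobordism between the $0$-surgeries, and a Mayer--Vietoris / half-lives-half-dies argument shows the meridians are homologous in $H_1(X;\Z)\cong\Z$. For step (ii), rational sliceness of $4_1$ is classical (e.g.\ it bounds a disk in a rational homology ball, being order two with a concordance to its reverse-mirror), and rational sliceness of $K$ would follow from a rational-sliceness-preservation property of the chosen pattern applied to a rationally slice companion — this is where I would need a lemma saying the pattern sends rationally slice knots to rationally slice knots, proved by an explicit rational homology ball built from the pattern's Seifert-like data.

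Step (iii) is the main obstacle, precisely because, as the introduction emphasizes, most knot-Floer concordance invariants ($\tau$, $\varepsilon$, $\Upsilon$, $\nu^+$, and Rasmussen's $s$) vanish for these knots, so the standard toolkit is useless. The plan is therefore to use a more refined invariant sensitive to the full $\CFK_\cR$ (or $\CFK^\infty$) chain homotopy type — I would compute $\CFK_{\mathcal R}$ of $K$ via a satellite/bordered-Floer formula (pairing the type-$A$ module of the pattern with the type-$D$ module of $4_1$, using the defined $\CFA$, $\CFD$, $\CFDA$ machinery), and extract a concordance obstruction such as the local equivalence class of $\CFK_\cR$, equivalently the $\varphi_j$ or secondary-type invariants / the Hendricks--Manolescu--type or involutive invariants, or a ``knot-like complex'' local equivalence invariant that detects a difference from the (locally trivial, order-two) complex of $4_1$. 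Distinguishing the orders in $\cC$ then amounts to showing this local equivalence class of $K$ is not $2$-torsion (or not torsion at all), by exhibiting that no finite connected sum of copies of $\CFK_\cR(K)$ is locally trivial; this reduces to a combinatorial computation with the explicit complex, which I expect to be the technical heart of the paper.
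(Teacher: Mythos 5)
Your proposal matches the paper's argument in all essentials: the pair is $4_1$ and its image under the winding-number-one Mazur pattern $Q$, the homology cobordism rel meridians comes from the winding-number-one property (via Cochran--Franklin--Hedden--Horn), rational sliceness of $Q(4_1)$ follows because $Q(U)$ is unknotted, and the concordance/order obstruction is obtained by computing $\CFK_{\cR}(Q(4_1))$ through the bordered Floer pairing with Levine's bimodule for the Mazur link complement and then applying the involutive (almost) local equivalence machinery of Hendricks--Manolescu and Hom--Kang--Park--Stoffregen to show no connected sum of copies of $Q(4_1)$ is locally trivial. This is essentially the same approach as the paper, with the technical heart being exactly where you locate it.
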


The figure eight knot, shown in Figure \ref{fig8}, is negative amphicheiral and is not smoothly slice, so it represents a 2-torsion element of $\cC$. A proof by Cochran, based on earlier work of Fintushel and Stern \cite{FS}, showed that the figure eight knot is rationally slice. The figure eight knot will be the finite-order knot in our pair, and we will recall the satellite construction to build our second knot.

\begin{figure}[ht!]
\centering
\includegraphics[width=35mm]{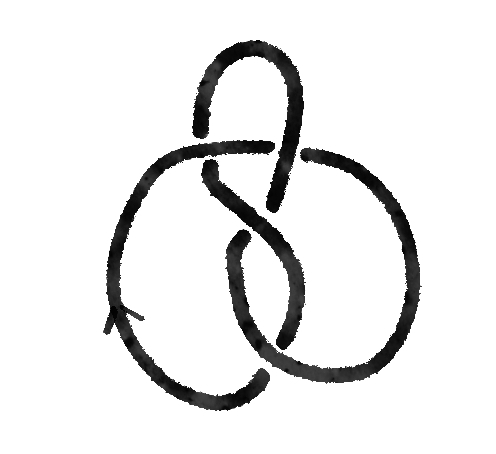}
\caption{The figure eight knot, $4_{1}$} \label{fig8}
\end{figure}

Let $P$ be a knot embedded in $S^1 \times D^2$, let $l$ be the longitude given by $S^{1} \times x$ for $x \in \partial D^{2}$, and let $\nu(K)$ be a tubular neighborhood of $K$. Define $i:S^{1} \times D^{2} \rightarrow \nu(K)$ as the map that sends $l$ to $\lambda_{K}$, the Seifert longitude of $K$. Then, the image of $i(P)$ gives the {\em satellite knot} $P(K)$, with {\em companion} $K$ and {\em pattern} $P$.

The maps given by satellite constructions descend to well-defined functions on $\cC$, and thus satellite operators are useful in the study of the group structure of $\cC$. Of particular interest is knots with winding number $\pm 1$, where the {\em winding number} is computed as the algebraic intersection number of $P$ with a meridional disc in $S^1 \times D^2$. Such knots have historically been of interest due to their use in constructing Mazur $4$-manifolds, and have been extensively studied in such papers as \cite{Cochran_2014} and \cite{Cochran_2013}. We will focus on one specific winding number $+1$ pattern, the Mazur pattern, depicted in Figure \ref{mazurinST}.

\begin{figure}[ht!]
\centering
\includegraphics[width=50mm]{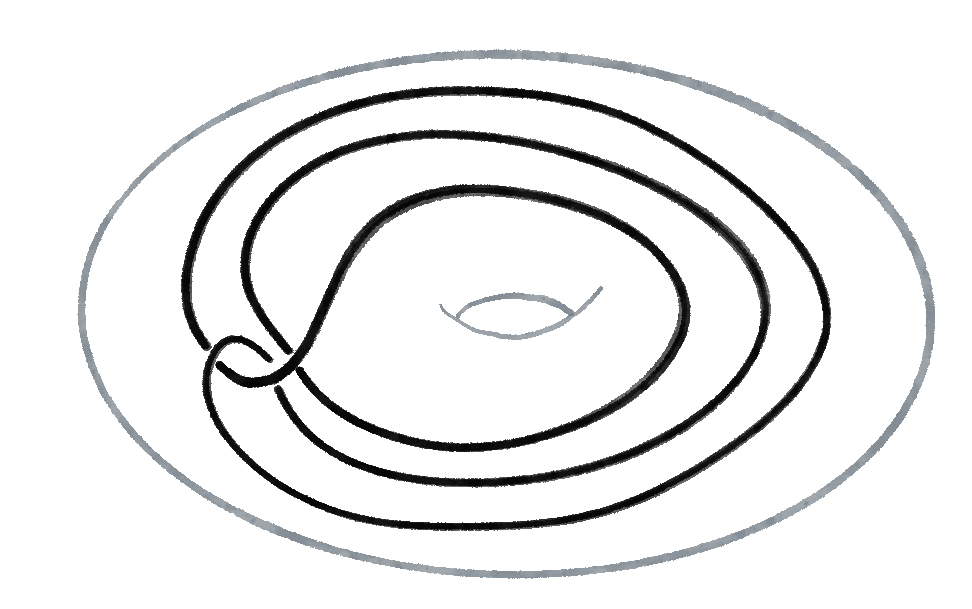}
\caption{The Mazur pattern in the solid torus.} \label{mazurinST}
\end{figure}

We note that the figure eight has Thurston-Bennequin number sufficiently negative to disqualify it from satisfying the conditions outlined for the construction in \cite{Yasui_2015}.

Throughout this paper, we will borrow notation from Levine in \cite{Lev14}. In particular, we let $Q$ denote the Mazur pattern and let $X(L_Q)$ denote the complement of the two-component link $L_Q$, shown in Figure \ref{mazurlink}. The figure eight knot $4_1$ is rationally slice; then $Q(4_1)$ will likewise be rationally slice as $Q(U)$, where $U$ denotes the unknot, is unknotted in $S^3$.

\begin{figure}[ht!]
\centering
\includegraphics[width=40mm]{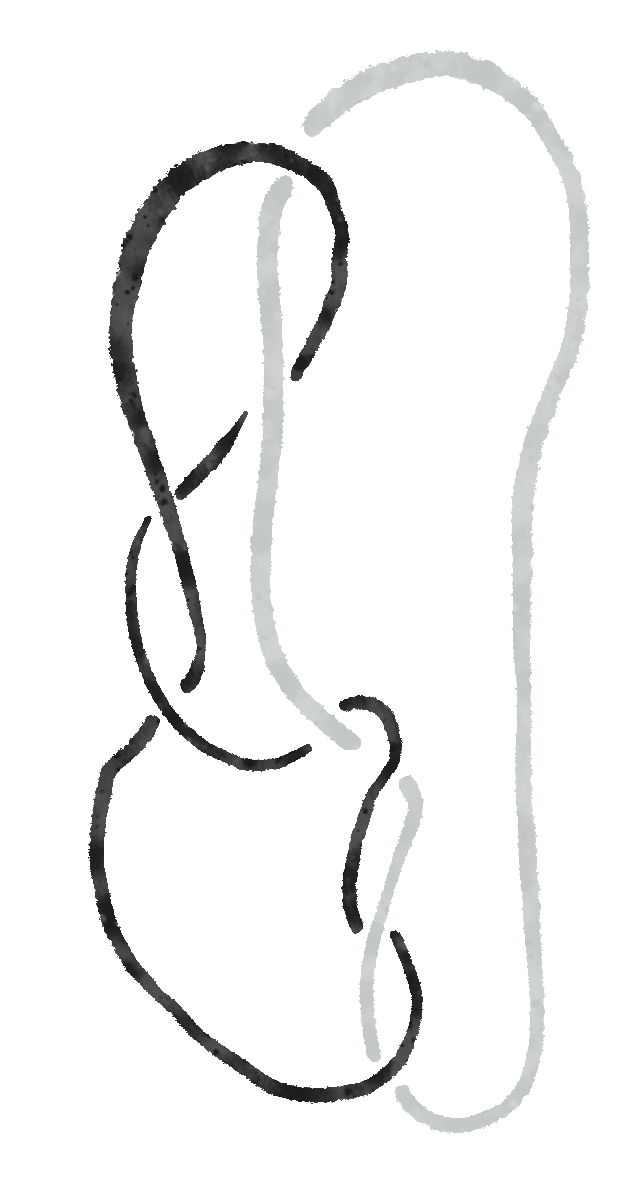}
\caption{$L_{Q}$, the link of the Mazur pattern (black) with the unkot (grey).}
\label{mazurlink}
\end{figure}

Because the Mazur pattern has winding number $+1$, we can appeal to Corollary 2.2 in \cite{Cochran_2013} to conclude that for any knot $K$ in $S^3$, the $0$-surgeries of $K$ and $Q(K)$ are smoothly $\Z$-homology cobordant rel meridians. In particular:

\begin{proposition} 
\label{prop:myknots}
Let K be the figure eight knot in $S^3$. Then, the 0-framed surgery manifolds $S^3_0(K)$ and $S^3_0(Q(K))$ are homology cobordant rel meridian, but $K$ and $Q(K)$ are not smoothly concordant. In particular, $K$ has order two and $Q(K)$ has infinite order in $\cC$.
\end{proposition}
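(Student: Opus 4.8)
The homology cobordism statement is already handed to us: since $Q$ has winding number $+1$, Corollary 2.2 of \cite{Cochran_2013} gives that $S^3_0(K)$ and $S^3_0(Q(K))$ are smoothly $\Z$-homology cobordant rel meridians for every knot $K$, in particular for $K = 4_1$. So the content of the proposition is entirely the \emph{non}-concordance assertion, together with the order computation, and that is where all the work goes.

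The plan is to distinguish $K = 4_1$ from $Q(K)$ using an invariant that survives into the rational concordance group (or at least detects something the classical Floer-theoretic invariants miss, since $\tau$, $\epsilon$, $\Upsilon$, etc. all vanish here because both knots are rationally slice). The natural candidate is an order obstruction: I would show that $4_1$ has order exactly two in $\cC$ while $Q(4_1)$ has infinite order. The figure eight side is classical: $4_1$ is negative amphicheiral, so $4_1 \conn 4_1 \sim 4_1 \conn (-4_1^r)$, which is slice; hence $4_1$ is $2$-torsion, and it is not slice (e.g.\ its Alexander polynomial $-t + 3 - t^{-1}$ is not a Fox--Milnor norm, or its signature/Arf obstruction), so its order is exactly two. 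The real work is proving $Q(4_1)$ has infinite order. Since $Q(4_1)$ is rationally slice, I cannot use $\tau$ or $s$; instead I would turn to an invariant sensitive to the $\U,\V$-module structure of the full knot Floer complex $\CFK_\cR(Q(4_1))$ over $\F[\U,\V]$ — for instance $\varphi_j$-type invariants, the $\nu^+$/$\Upsilon$ refinements, or more likely an argument through the bordered/satellite formula of Levine \cite{Lev14} computing $\CFK$ of a Mazur satellite from $\CFK(4_1)$, extracting a local-equivalence class (in the sense of Hendricks--Hom--Stoffregen--Zemke / Dai--Hom--Stoffregen--Truong) that is not torsion. Concretely: compute $\CFK_\cR(Q(4_1))$ via the pairing $X(L_Q)$ with the knot complement of $4_1$, identify its local equivalence class, and show that class has infinite order in the relevant local equivalence group, which maps from $\cC$.

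The main obstacle, and the technical heart, will be this $\CFK$ computation for the Mazur satellite of the figure eight and the subsequent extraction of a nonvanishing, non-torsion invariant from it. The figure eight has $\CFK_\cR(4_1)$ equal to a "box" plus a generator (the stably trivial part $\F[\U,\V]$ plus a square complex), which is locally trivial — that is exactly why $4_1$ is rationally slice and why naive invariants vanish — so after applying the winding-number-one Mazur pattern one must see that the pattern operation \emph{creates} nontrivial local equivalence data out of locally trivial input. This is plausible because the Mazur pattern is known (Levine, Hom--Levine--Lidman, Chen) to be far from an isomorphism on $\cC$, but making it rigorous here requires carefully tracking the bordered invariant $\CFD$ or $\CFA$ of $4_1$'s complement through the $X(L_Q)$ pairing, keeping the Maslov and Alexander gradings, and then computing whatever concordance-homomorphism-to-a-group (local equivalence class, or an $\epsilon$-type or $\varphi$-type numerical invariant of the resulting complex) is large on $nQ(4_1)$ for all $n$. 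A secondary, smaller obstacle is confirming that the chosen invariant really is a homomorphism out of $\cC$ (so that infinite order of $Q(4_1)$'s invariant forces infinite order of $Q(4_1)$) and that it is compatible with connected sums of satellites; these are standard but must be cited precisely. Once infinite order of $Q(4_1)$ is in hand, non-concordance to $4_1$ is immediate from the order mismatch, giving the proposition and, combined with Proposition~\ref{prop:myknots}'s hypotheses, Theorem~\ref{theorem:main}.
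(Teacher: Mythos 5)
Your overall architecture matches the paper's: the rel-meridians homology cobordism is quoted from Corollary 2.2 of \cite{Cochran_2013}, the order-two statement for $4_1$ is the classical amphicheirality-plus-non-sliceness argument, and the satellite complex $\CFKUV(Q(4_1))$ is computed by pairing $\CFAh$ of the figure eight complement with Levine's bimodule $\CFDDh(X(L_Q))$. The gap is in the step where you extract an obstruction from that complex. The candidate invariants you name --- the ordinary local equivalence class, $\varphi_j$, $\nu^+$, $\Upsilon$ --- all provably vanish here, for exactly the reason you yourself identify for $4_1$: the computation shows that $\CFKUV(Q(4_1))$ is a single free generator $a$ (carrying the homology) plus acyclic summands (an L-shaped acyclic piece and six box complexes), so its non-involutive local equivalence class is that of the unknot. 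The Mazur pattern does \emph{not} ``create nontrivial local equivalence data out of locally trivial input'' at the level of the chain complex alone, and any homomorphism factoring through ordinary local equivalence returns zero on $Q(4_1)$. So the primary route you sketch terminates in a trivial invariant and cannot distinguish the orders.

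What actually carries the obstruction is the conjugation involution: the paper computes (partially, modulo the ideal $(\cU,\cV)$, i.e.\ the \emph{almost} $\iota_K$-complex of Hom--Kang--Park--Stoffregen \cite{HKPS_2020}) that $\iota_K(a) = a + g \muv$, where $g$ is a cycle in the acyclic L-shaped summand that is not a boundary, and that $g$ can be normalized out of $\iota_K$ of every other generator. The infinite-order statement then follows by showing that for every $N$ the element $g \otimes a \otimes \cdots \otimes a$ survives in $(1+\iota_K)\bigl(f(\eta)\bigr)$ for any putative almost local equivalence $f$ from the unknot complex to the $N$-fold tensor power, contradicting $f(1+\iota_K) \simeq (1+\iota_K)f$. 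Your one parenthetical nod to Hendricks--Hom--Stoffregen--Zemke-style local equivalence points in the right direction, but you would need to commit to the involutive (indeed almost-involutive, since $\iota_K$ is only pinned down mod $(\cU,\cV)$) framework and to the nontrivial computation of $\iota_K$ itself; without that, the argument has no surviving obstruction.
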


In Section \ref{sec:HF}, we will provide the necessary relevant background in knot Floer homology \cite{OSknots} and bordered Floer homology \cite{Lipshitz_2018}. In Section \ref{sec:3comp}, we will use the bordered Floer homology package to compute the knot Floer complex of $Q(K)$ by first computing the bordered invariants of its complement in $S^3$. We make use of the bimodule $\CFDDh(X(L_Q))$ computed by Levine in \cite{Lev14}, which corresponds to the complement of the Mazur pattern $Q$ in the solid torus, to perform the following $A_\infty$-tensor product:

$$\CFAh(S^3 \smallsetminus \text{nbd}(4_1)) \hspace{.25cm} \widetilde{\otimes}_{A_\infty} \hspace{.25cm} \CFDDh(X(L_Q)) \hspace{.25cm} \simeq \hspace{.25cm} \CFDh(S^3 \smallsetminus \text{nbd}(Q(4_1)))$$

Then, using an algorithm outlined in \cite{Lipshitz_2018} and appealing to the formal properties of knot Floer homology, we convert $\CFDh(S^3 \smallsetminus$ nbd($Q(4_1)$)) into  $\CFKUV(Q(4_1))$ over the ring $\cR = \mathbb{F}_2[\cU,\cV]$. The complex we compute contains the data of $\CFKi(Q(4_1))$ modulo some bigrading information; the fact that knot Floer homology categorifies the Alexander polynomial $\Delta_{K}(t)$ and detects the genus of $K$ allows us to ascertain relative gradings of our basis elements to the extent that we require for the next and final step. In Section \ref{sec:4invol}, we use the involutive knot Floer homology package \cite{HendricksManolescu} to partially compute the almost $\iota_K$-complex of $Q(4_1)$. Once we have this partial computation in hand, in Section \ref{sec:5final} we make use of an algebraic notion of equivalence to make an argument similar to the proof of Theorem 1.3 in \cite{HKPS_2020} to prove that $Q(4_1)$ cannot have finite order in $\cC$. Demonstrating this fact gives us a proof of Proposition \ref{prop:myknots}which immediately implies Theorem \ref{theorem:main}.

Considering the results of this paper, one might find it useful to ask what happens when the Mazur pattern is iterated on the figure eight knot. Based on some further computations we are able to make the following claim:

\begin{Conjecture} \label{conj}
The family of knots $Q^{n}(4_1)$, where $n$ denotes iterations of the Mazur pattern, gives a family of rationally slice knots which are linearly independent in $\mathcal{C}$. Within this family, $\forall \hspace{.1cm} n \geq 1$, $\hspace{.1cm} Q^{n}(4_1)$ has infinite order in $\mathcal{C}$.
\end{Conjecture}

Proof of this conjecture is closely related to the work in this paper, and will appear in an upcoming paper. It follows immediately from Conjecture \ref{conj} that $Q^{n}(4_1)$ for $n \geq 0$ gives an infinite family of rationally slice knots that pairwise have 0-surgery manifolds which are all homology cobordant relative to meridian, but no two knots in this family are smoothly concordant. Proof of this conjecture is also supported by recent work of Hedden and Pinz{\'o}n-Caicedo in \cite{Hedden_2021}.

We set some conventions here that we will abide by for the duration of this paper, unless otherwise indicated. We will assume all knots and knot complements have framing $0$; that is, have framing given by a canonical meridian and a $0$-framed longitude. We let $\F$ denote the field of two elements $\mathbb{Z}/2\mathbb{Z}$, and we use the notation $\CFKUV{(K)}$ for the knot Floer complex of $K$ over the ring $\cR$.
\vspace{.2in}

\noindent \textbf{Acknowledgments:}
The author would like to thank Jennifer Hom for her guidance and support throughout the course of this paper, and throughout the past five years. The author would also like to thank JungHwan Park for helpful conversations, and Arunima Ray for helpful comments on a draft of this paper.
\section{Background: knot and bordered Floer homology}
\label{sec:HF}

We assume familiarity with the Heegaard Floer homology package of Ozsv{\'a}th-Szab{\'o} \cite{HF}. We will provide here an overview of knot Floer homology, defined by Ozsv{\'a}th-Szab{\'o} \cite{OSknots} and independently by J. Rasmussen \cite{Rasmussen-thesis}, and bordered Heegaard Floer homology, defined by Lipshitz-Ozsv{\'a}th-Thurston \cite{Lipshitz_2018}. Section 4 will provide the necessary background on involutive knot Floer homology, defined by Hendricks and Manolescu \cite{HendricksManolescu}. For a more in-depth consideration of any of these theories, we direct the reader to the papers cited, or to the following helpful application and survey papers: \cite{HomSurvey}, \cite{BFHnotes}, and \cite{Hendricks_2019}.

\subsection{Knot Floer Homology}
Knot Floer homology is a Heegaard Floer theory for null-homologous knots embedded in 3-manifolds. We will restrict our attention to knots in $S^3$. To such an oriented knot $K$, we associate a doubly pointed Heegaard diagram $\cH = (\Sigma, \alpha, \beta, w, z)$, with base points $w$ and $z$, where $\Sigma$ is the genus-g closed, oriented Heegaard surface. This surface is decorated with \textbf{$\alpha$}, a g-tuple of $\alpha$-circles, and \textbf{$\beta$}, a g-tuple of $\beta$-circles, and these attaching circles satisfy a series of intersection conditions amongst same sets and between the two sets. Then, the {\em knot Floer complex} is the chain complex we build using the intersection data of two Lagrangian submanifolds $\bbT_{\alpha} = \alpha_{1} \times \alpha_{2} \times \cdots \times \alpha_{g}$ and $\bbT_{\beta} = \beta_{1} \times \beta_{2} \times \cdots \times \beta_{g}$ in the symmetric product of $\Sigma_{g}$. Intersection points between the two tori give generators of our complex, and the differential counts the pseudoholomorphic discs with boundary on $\bbT_{\alpha} \cup \bbT_{\beta}$ that connect these points. The knot Floer complex of $K$ is a $\mathbb{Z} \oplus \mathbb{Z}$-filtered chain complex, whose filtered chain homotopy type is a knot invariant of $K$.

We will use a variation on the full knot Floer complex described in \cite{OSknots} and \cite{Rasmussen-thesis}. In this setting, the {\em knot Floer complex over} $\cR$, denoted $\CFKUV(K)$, is the complex finitely and freely generated over the polynomial ring $\mathcal{R} = \mathbb{F}[\cU,\cV]/ (\cU\cV)$.

Given a generator $\textbf{x} \in \bbT_{\alpha} \cap \bbT_{\beta}$, our chain complex admits a bigrading given by: 

$$\text{gr}(\textbf{x}) = (\text{gr}_{\mathcal{U}}(\textbf{x}), \text{gr}_{\mathcal{V}}(\textbf{x}))$$

\noindent Multiplication by $\mathcal{U}$ lowers the $\mathcal{U}$-grading by 2, multiplication by $\mathcal{V}$ lowers the $\mathcal{V}$-grading by 2, and the differential lowers both gradings by 1. 
Now, given two generators \textbf{x}, \textbf{y} $\in \mathbb{T}_{\alpha} \cap \mathbb{T}_{\beta}$, let $\pi_2(\textbf{x},\textbf{y})$ be the space of homotopy classes of discs connecting \textbf{x} and \textbf{y}. Then, we can define a relative bigrading on $\CFKUV(K)$ as follows:

$$\text{gr}_{\mathcal{U}}(\textbf{x}) -\text{gr}_{\mathcal{U}}(\textbf{y}) = \mu(\phi) - 2n_w(\phi)$$
$$\text{gr}_{\mathcal{V}}(\textbf{x}) -\text{gr}_{\mathcal{V}}(\textbf{y}) = \mu(\phi) - 2n_z(\phi)$$

\noindent
where $\phi \in \pi_2(\textbf{x}, \textbf{y})$, and $\mu(\phi))$ is the Maslov index of this homotopy class. 
\vskip.1in

The set of generators of $\CFKUV{(K)}$ admits a $\mathbb{Z}\oplus \mathbb{Z}$-bigrading $(M,A)$ with {\em M} the {\em Maslov grading} equivalent to the $\mathcal{U}$-grading as described above, and {\em A} the {\em Alexander grading} defined as: 

$$A = \frac{1}{2}(\text{gr}_{\mathcal{U}} - \text{gr}_{\mathcal{V}})$$
\vspace{.05cm}

\noindent We note that it is possible in general to pin down absolute Maslov and Alexander gradings for the generators, but we will only need to make use of absolute Alexander and relative Maslov gradings for our purposes.

We recall some additional properties of Knot Floer homology. The knot Floer complex of a knot detects genus \cite{OSknots} in the following sense. Let $g(K)$ denote the genus of the knot $K$; then

$$g(K) = \text{max}\{i \in \mathbb{Z} : \HFKh(K, i) \neq 0\}$$
\vskip.1in
\noindent Additionally, knot Floer homology categorifies the Alexander polynomial; that is, the graded Euler characteristic of the knot Floer homology of a knot $K$ gives $\Delta_{K}(t)$, the Alexander polynomial of $K$, up to multiplication by a unit.

\subsection{Bordered Floer homology}
Bordered Floer homology is a version of Heegaard Floer theory developed by Lipshitz-Ozsv{\'a}th-Thurston \cite{Lipshitz_2018} for 3-manifolds with boundary. We equip the boundary surface with a basepoint and choose a parameterization of its handlebody decomposition. We then associate to it a differential graded algebra, which we will denote $\mathcal{A}$, which is determined by $\partial M$ equipped with this data. We suppress from notation the choice of parameterization of $\partial M$, and note that our invariants will be independent of this choice up to homotopy equivalence. 

Throughout this paper, all 3-manifolds with boundary are knot or link complements in $S^3$ and thus all boundaries are toroidal. In this case, $\mathcal{A}$ is called the {\em torus algebra}, which we can denote specifically as $\mathcal{A}(\bbT)$. The ring of idempotents of the torus algebra is given by $\mathcal{A}(\mathcal{I}) = (\iota_0, \iota_1)$ where \textbf{1}$= \iota_{0} + \iota_{1}$ gives the unit of $\mathcal{A}$, and we provide an $\mathbb{F}_2$-basis for the algebra: $$\{ \rho_1, \hspace{.1cm} \rho_2, \hspace{.1cm} \rho_3, \hspace{.1cm} \rho_{12},\hspace{.1cm} \rho_{23}, \hspace{.1cm} \rho_{123} \}$$
We describe multiplication in this algebra using the following quiver:

\begin{center}
\begin{tikzpicture}
[>=stealth,
   shorten >=1pt,
   node distance=3cm,
   on grid,
   auto,
   every state/.style={draw=black!60, fill=black!5, very thick}
  ]

 \node[draw=none, fill=none] (mid)               {$\iota_0$};
 \node[draw=none, fill=none] (right) [right=of mid] {$\iota_1$};
 
\path[->]
(right) edge    node[swap]    {$\rho_2$}  (mid)
;

\path[->, draw]
(mid) to [in=120, out=60]    node  {$\rho_1$} (right)
;
\path[->, draw]
(mid) to [in=240, out=300]   node  {$\rho_3$}  (right)

;
\end{tikzpicture}
\end{center}
\noindent The nonzero products of this algebra are given as $\rho_1\rho_2 = \rho_{12}$, $\rho_2\rho_3 = \rho_{23}$, and $\rho_1\rho_2\rho_3 = \rho_{123}$. All other products are zero.

Note that this algebra is 8-dimensional over $\mathbb{F}_2$ and is in fact just a graded algebra; it has no differential. Note also that, as described in the quiver, there are compatibility conditions with the idemptotents; e.g., we have that $\rho_{1} = \iota_{0}\rho_{1}\iota_{1}$, $\rho_{2} = \iota_{1} \rho_{2} \iota_{0}$, etc. Further consideration of the torus algebra is given in Section 11.1 of \cite{Lipshitz_2018}.

With our algebra in hand, we can compute the {\em bordered Floer invariants} of $M$, which are differential graded modules over $\mathcal{A}$. The first type of invariant is called a {\em type D structure}, denoted $\CFDh(M)$, which is a left differential module over $\mathcal{A}$ equipped with a structure map:

$$\delta^{1}: \CFDh(M) \rightarrow \mathcal{A} \otimes \CFDh(M)$$
\vskip.1in
\noindent that satisfies the condition:

$$(\mu_{1} \otimes \text{id}_{\CFDh(M)}) \circ (\text{id}_{\mathcal{A}} \otimes \delta_{1})\circ (\delta_{1}) + (\mu_{2} \otimes \text{id}_{\CFDh(M)}) \circ \delta_{1} = 0$$

\noindent where $\mu_{2}: \mathcal{A} \otimes \mathcal{A} \rightarrow \mathcal{A}$ is (associative) multiplication on $\mathcal{A}$, and $\mu_{1}$ is the differential, in the cases it does not vanish.

The second type of invariant is called a {\em type A structure}, denoted $\CFAh(M)$, which is a right $\mathcal{A}_{\infty}$ module over $\mathcal{A}$ equipped with multiplication maps:

$$m_{i+1}: \CFAh(M) \otimes \underbrace{\mathcal{A} \otimes \dots \otimes \mathcal{A}}_{\text{i copies}} \rightarrow \CFAh(M)$$ 
\vskip.1in
\noindent These $m_i$ maps satisfy a series of compatibility conditions, called the $\mathcal{A}_\infty$ relations. For any element $x \in \CFAh(M)$ and for $a_1, \cdots a_n \in \mathcal{A}$, we have that:

\[
\begin{aligned}
0 &= \sum_{i=0}^{n}m_{n-i+1}(m_{i+1}(x \otimes a_{1} \otimes \cdots \otimes a_{i}) \otimes a_{i+1} \otimes \cdots \otimes a_n) \\
 &+ \sum_{i=1}^{n}m_{n+1}(x \otimes a_{1} \otimes \cdots \otimes a_{i-1} \otimes d(a_i) \otimes a_{i+1} \otimes \cdots \otimes a_n) \\
&+ \sum_{i=1}^{n-1}m_{n}(x \otimes a_1 \otimes a_{i}a_{i+1} \otimes a_{i+2} \otimes \cdots \otimes a_{n}) 
\end{aligned}
\]

These modules are shown to recover the closed Heegaard Floer invariants via a series of pairing theorems. When a closed three-manifold $Y$ is the union of two three-manifolds with boundary $M_{1}$ and $M_{2}$ identified along their boundaries, then the Heegaard Floer complex $\HFh(Y)$ can be recovered via the pairing theorems outlined in \cite{Lipshitz_2018}.  $\HFh(Y)$ is homotopy equivalent to the $A_{\infty}$ tensor product of the type A structure of $Y_i$ with the type D structure of $Y_{i+1 \hspace{.1cm} \text{mod} \hspace{.1cm} 2}$.
When our modules satisfy appropriate conditions of boundedness, we can use the box tensor product $\boxtimes$ in place of the $A_{\infty}$ tensor product $\widetilde{\otimes}$. We describe the pairing theorem, formalized in the Theorem 1.3 of \cite{Lipshitz_2018}, via the following homotopy equivalence:

$$\CFAh(M_1) \hspace{.2cm} \boxtimes \hspace{.2cm} \CFDh(M_2) \hspace{.2cm} \simeq \hspace{.2cm} \CFh(M_1 \cup_{h} M_2)$$ 
where $h$ is an orientation-reversing homeomorphism, $h: \partial M_1 \rightarrow \partial M_2$.

The bordered Floer invariants generalize for connected, oriented three-manifolds with $n$ connected boundary components, where we choose an assignment of a type A or type D structure for each boundary component. For example, when $n=2$, we have three equivalent bimodules: $\CFDDh, \CFAAh,$ or $\CFDAh$. Where there are two boundary components, we will differentiate between the two copies of the torus algebra as $\mathcal{A}_{\rho}$ with basis elements $\rho_i$ and $\mathcal{A}_{\sigma}$ with basis elements $\sigma_j$. In this paper, we will not consider manifold with more than two boundary components. The earlier pairing theorems extend across n-modules; relevant to this paper there is, for $M_1$ a manifold with single boundary component and $M_2$ a manifold with two boundary components, an $\mathcal{A}_\infty$-homotopy equivalence [Theorem 11, \cite{Lipshitz_2011}]: 

$$\CFAh(M_1) \hspace{.2cm} \widetilde{\otimes}_{A_\infty}\hspace{.2cm} \CFDDh(M_2) \hspace{.2cm} \simeq \hspace{.2cm} \CFDh(M_1 \cup_{h} M_2)$$

\noindent where we match the symbols of the algebras to specify which boundary component of $M_2$ is paired with $\partial M_1$. We note that this tensor product only makes sense when the boundary components being matched are surfaces of the same genus. As we will see in Section 3, performing such a computation gives us something homotopy equivalent to a differential module, which we can easily simplify until it more clearly resembles a type D structure. 
\vspace{.1cm}

A distinct advantage of working over the ring $\mathcal{R} = \mathbb{F}[\cU, \cV](\cU\cV)$
is that $\CFKUV(K)$ is equivalent to the bordered invariant $\CFDh(S^3 \smallsetminus nbd(K))$ \cite{Lipshitz_2018}. a straightforward algorithm tells us how to convert between the knot Floer complex of a knot and the bordered Floer module associated to its complement in $S^3$, which is described in Theorem A.11 of \cite{Lipshitz_2018}. As it is highly relevant to this algorithm, we remind the reader that we have assumed a 0-framing on all knots and knot complements.

In section 3 we will provide a detailed overview of performing the tensor product described above; a reader interested in thoroughly understanding the intricacies of such a computation and the underlying algebraic theory is encouraged to consult \cite{Lipshitz_2011}, \cite{Lipshitz_2015}, and Chapter 2 of \cite{Lipshitz_2018}.
\section{The Knot Floer complex of Q(K)}
\label{sec:3comp}

For our computation we will use a model for the $\mathcal{A}_\infty$ tensor product outlined in Section 2.4 of \cite{Lipshitz_2018}. We recall here some conventions and notations.

\begin{definition}[\cite{Lipshitz_2018}, Definition 2.22]
Let $\mathcal{A}$ be an $\mathcal{A}_\infty$ algebra and $N$ a graded left $\mathcal{A}(\mathcal{I})$-module with structure map $\delta^{1}: N \rightarrow \mathcal{A} \otimes N$. We can build higher order maps $\delta^{n}: N \rightarrow \mathcal{A}^{\otimes n} \otimes N $ 
by iterating $\delta^{1}$ inductively. 
Then, $\delta^{1}$ is said to be {\em bounded} if for every $\textbf{x} \in N$, $\delta^{k}(\textbf{x}) = 0$ for $0 \lll k \in \mathbb{Z}.$
\end{definition}
\noindent We then define a bounded type D structure: 

\begin{definition}
Let $(N, \delta_1)$ be a type D structure over $\mathcal{A}$ with base ring $\mathcal{A}(\mathcal{I})$. Then, $(N, \delta_1)$ is said to be {\em bounded} if $\delta^{1}$ is bounded as defined in Definition 3.1.
\end{definition}

\noindent Definitions 3.1 and 3.2 generalize to the case of bimodules in the following way. Let $N$ be a type $DD$ structure over the differential graded algebras $\mathcal{A}$ and $\mathcal{B}$. Then, $N$ comes with a structure map 

$$\delta^{1}: N \rightarrow \mathcal{A}  \otimes N \otimes \mathcal{B}$$

Higher order maps $\delta^{n}$ can be computed inductively as carefully described in \cite{Lipshitz_2015}, with $$\delta^{j}: N \rightarrow \mathcal{A}^{\otimes j} \otimes N \otimes \mathcal{B}^{\otimes j}$$ 

\noindent Then, a(n operationally) bounded type $DD$ structure can be defined analogously to Definition 3.2. We note that the codomain of the structure map suggests notions of left- and right-boundedness; in our case, our module will be both left- and right-bounded simultaneously, hence we will just call it bounded.

\noindent Finally, a bit more convention must be set up:

\begin{definition}[\cite{Lipshitz_2018}, Definition 11.3] 
Let $V_0 \oplus V_1$ be a $\mathbb{F}$-graded vector space. Then, we can define {\em coefficient maps} $D_{\mathcal{I}}$  which are a collection of maps

$$D_{\mathcal{I}}: (V_0 \oplus V_1)^{(i_0 - 1) \hspace{.05cm} \text{mod} \hspace{.05cm} 2} \rightarrow (V_0 \oplus V_1)^{(i_n) \hspace{.05cm} \text{mod} \hspace{.05cm} 2}$$
\noindent where $\mathcal{I}$ is a sequence of consecutive integers $\{i_0, \dots i_k\} \subset \{ 1, 2, 3\}$.
\end{definition}

\vskip.1in
\noindent The way this is interpreted is as follows. Given some portion of a type D structure diagram:
\begin{center}
\begin{tikzpicture}
[>=stealth,
   shorten >=1pt,
   node distance=2cm,
   on grid,
   auto,
   every state/.style={draw=black!60, fill=black!5, very thick}
  ]  
\node[draw=none,fill=none]      (mid)                                  {x};
\node[draw=none,fill=none]      (right)       [right=of mid]              {y};
\node[draw=none,fill=none]      (left)          [left=of mid]              {z};

\path[->]
(mid)     edge        node        {$D_{i_0i_1i_2}$}    (right)
(mid)   edge        node[swap]            {$D_{j_0j_1}$} (left)
;\
\end{tikzpicture}
\end{center}

\noindent we read this as:  $$\delta^{1}(x) = \rho_{j_0j_1} \otimes z + \rho_{i_0i_1i_2} \otimes y$$
\vskip.2in
The purpose of introducing this notation is to allow us to loosely describe how our tensor product will be performed. We note that the following definition is given in \cite{Lipshitz_2018} in greater generality; we restrict to the relevant case for our purposes. 

\begin{definition}[\cite{Lipshitz_2018}, Definition 2.26] Let $\mathcal{A}$ and $\mathcal{B}$ be differential graded algebras, and let $\mathcal{M}$ be a right $A_\infty$ module over $\mathcal{B}$. Let $(\mathcal{N}, \delta^{1})$ be a bounded type DD structure over $\mathcal{A}$ and $\mathcal{B}$ with $\delta^{1}: \mathcal{N} \rightarrow \mathcal{A} \otimes \mathcal{N} \otimes \mathcal{B}$. Then, we can form the differential graded module $\mathcal{M} \boxtimes \mathcal{N}$ called the {\em box tensor product}. Its differential is given by, for all $x \in \mathcal{M}$ and $y \in \mathcal{N}$,

$$ \partial^{\boxtimes} (x \otimes y) = \sum_{\mathcal{I}_j} m_{i+1}(x \otimes \rho_{\mathcal{I}_1} \otimes \cdots \rho_{\mathcal{I}_n}) \otimes (D_{\mathcal{I}_n} \circ \cdots \circ D_{\mathcal{I}_1})(y)$$ 

\noindent where all $\mathcal{I}_j$ are strictly increasing sequences of integers in $\{1, 2, 3\}$. We note that the box tensor product involving a type DD structure is not precisely associative; however it is associative up to homotopy equivalence.
\end{definition}

Recall that type A and type D structures each admit a splitting as a direct sum $V_0 \oplus V_1$ of $\mathbb{F}_2$-vector spaces, with the vector space $V_j$ containing the generators with non-zero $\iota_j$ action. Then, two generators will be tensored only if they are both in the vector space $V_0$ (resp. $V_1$) corresponding to the idempotent $\iota_0$ (resp. $\iota_1$) in the direct sums of their respective structures.

In \cite{Lipshitz_2018}, the authors show that when $\mathcal{M}$ or $\mathcal{N}$ are bounded, then $\mathcal{M} \widetilde{\otimes} \mathcal{N}$ is homotopy equivalent to $\mathcal{M} \boxtimes \mathcal{N}$. As it has several computational advantages, we will use $\boxtimes$ for our calculation.

To illustrate how the box tensor works in practice we give a quick example calculation here. Let $\mathcal{M}$ and $(N, \delta^{1})$ be as described in Definition 3.4. Let $\rho_i$ denote the basis elements of a copy of the torus algebra $\mathcal{A}$, and $\sigma_j$ denote the basis elements of another copy of the torus algebra $\mathcal{B}$. Let $x \in \mathcal{M}$ and suppose its multiplication maps are given by

$$m_2(x, \sigma_1) = w$$
$$m_4(x, \sigma_3, \sigma_2, \sigma_1) = z$$

\vskip.2in
\noindent Let $q \in N$ and let the relevant components of the image of the type D structure map be given by:

$$\delta^{1}(q) = \rho_2 \otimes r + \sigma_{1}\rho_{123} \otimes s + \sigma_{3} t$$
$$\delta^{1}(t) = \sigma_{2} \otimes u$$
$$\delta^{1}(u) = \sigma_{1} \otimes v$$

\noindent Then, 
$$\partial^{\boxtimes}(x \otimes q) = \sigma_{123} \otimes (w \otimes s) + (z \otimes v) + \sigma_2 \otimes (x \otimes r)$$
\vskip.2in

We note that this box tensor product operation results in unlabeled edges of our diagram, which are in general not an issue, but which are useful to remove before attempting to produce the knot Floer complex. We will require some edge cancellation lemmas to address these unlabeled edges that arise, and we will describe those presently. For simplicity, we will henceforth use the notation $X_K$ to denoted the complement of the knot $K$ in $S^3$, with 0-framing. 

\vskip.2in
\noindent \textbf{\textit{Partial Tensor Product Computation:}} In order to perform our desired box tensor product, we will reference the bimodule computed by Levine, denoted as $\CFDDh(X(L_{Q}))$ and described in Theorem 3.4 of \cite{Lev14}. In keeping with the notation used by the author, $\mathcal{A}_\sigma$ is assigned to the boundary component of the complement of the pattern and $\mathcal{A}_\rho$ is assigned to the boundary component of the complement of the unknot.

We will tensor the type A structure of the figure eight complement with the type DD structure of the Mazur link complement. We can use the algorithm outlined in section A.4 of \cite{Lipshitz_2018} to build $\CFDh(X_{4_1})$, then use the process described in \cite{Hedden_2016} to convert the type D structure to a type A structure. The type A structure of the figure eight complement is pictured in Figure \ref{fig:TypeA}. Note that the dotted edges denote valid multiplication maps of our type A structure, but they will not contribute anything to our specific $\partial^{\boxtimes}$ since there is not, for example, a sequence of coefficient maps $D_1 \circ D_2 \circ D_{123}$  in $\CFDDh(X(L_Q))$ for any generators associated to $\iota_0^{\sigma}.$ Note also that we labeled our edges with $\sigma_i$ basis elements, to indicate to which boundary component of the link complement $X(L_{Q})$ we are gluing the knot complement $X_{4_1}$.

\begin{figure}
\resizebox{16cm}{!}{

\begin{tikzpicture} 
[>=stealth,
   shorten >=1pt,
   node distance=3cm,
   on grid,
   auto,
   every state/.style={draw=black!60, fill=black!5, very thick}
  ]
 \node[draw=none,fill=none] (mid)                             {w};
 \node[draw=none,fill=none] (left)      [left=of mid]          {a};
 \node[draw=none,fill=none] (right)     [right=of mid]          {b};
 \node[draw=none,fill=none] (right2)    [below=of right]        {x};
 \node[draw=none,fill=none] (right3)    [below=of right2]       {c};
 \node[draw=none,fill=none] (mid3)      [left=of right3]        {y};
 \node[draw=none,fill=none] (left2)     [below=of left]         {z};
 \node[draw=none,fill=none] (left3)     [below=of left2]        {d};
 
\path[->]
    (mid)       edge                node[swap]          {$\sigma_2$}                            (left)
    (right)     edge                node[swap]          {$\sigma_1$}                            (mid)
    (right)     edge                node                {$\sigma_3$}                            (right2)
    (right3)    edge                node[swap]          {$\sigma_{3}, \sigma_{2}, \sigma_{1}$}  (right2)
    (right3)    edge                node                {$\sigma_1$}                            (mid3)
    (mid3)      edge                node                {$\sigma_2$}                            (left3)
    (left3)     edge                node                {$\sigma_{3}, \sigma_{2}, \sigma_{1}$}  (left2)
    (left)      edge                node[swap]          {$\sigma_{3}$}                          (left2)
    (mid)       edge[bend left]     node[swap]          {$\sigma_{23}$}                         (left2)
    (right)     edge[bend left]     node[swap]          {$\sigma_{123}$}                        (left2)
    (mid3)      edge                node[rotate=-45, xshift=1cm]   {$\sigma_{23}, \sigma_{2}, \sigma_{1}$} (left2)
    ;
    
\path[->, dashed]

    (right)     edge[bend right]        node[swap]      {$\sigma_{12}$}                         (left)
    (right3)    edge[bend left]         node            {$\sigma_{12}$}                         (left3)
   
;

\path[->, draw, dashed]
   
   (right3)   to [in=-30, out=145]       node[swap, rotate=-20, xshift=-.5cm]   {$\sigma_{123}, \sigma_2, \sigma_1$}    (left2)

;

\end{tikzpicture}

\tikzset{every loop/.style={min distance=10mm,looseness=10}}
\begin{tikzpicture}

        \node [] (alone) {e} ;

        \path[->, draw] (alone) to [in=10,out=90,loop, distance=1cm] node[auto, rotate=-45, xshift=-.5cm] {$\sigma_{3}, \sigma_{2}$} (alone);

        \path[->,draw] (alone) to [in=0, out=100, loop, distance=4cm] node[auto, rotate=-45, xshift=-.5cm] {$\sigma_{3}, \sigma_{23}, \sigma_{2}$} (alone);

        \path[->,draw] (alone) to  [in=-10,out=110,loop,distance=9cm] node[auto, rotate=-45, xshift=-1cm] {$\sigma_{3}, \sigma_{23}, \sigma_{23}, \sigma_{23}, \dots$} (alone);

        ;
    \end{tikzpicture}
    
}
\caption{The module $\CFAh(X_{4_1}))$ where $4_1$ has 0-framing}
\label{fig:TypeA}
\end{figure}

\noindent Recall that the goal of this section is to provide the result of the box tensor product:

$$ \CFAh(X_{4_1}) \hspace{.2cm} \boxtimes \hspace{.2cm}  \CFDDh(X(L_{Q})) \simeq \CFDh(X_{Q(4_1)})$$
\vskip.1in
\noindent We will not perform all of the painful computations here. Rather, as a means of illustrating the procedure by which this full answer is produced, we will carefully compute here $\partial^{\boxtimes}(a \otimes g_{i})$ for $g_i$ in the set: 

$$\{g_i \in \CFDDh(X(L_Q)) \hspace{.25cm} | \hspace{0.25cm} (\iota_0^{\sigma} \otimes \iota_j^{\rho}) \otimes g_i = g_i \}$$
\vskip.1in
\noindent Namely, we will box tensor $a \in \CFAh(X_{4_1})$ with each generator in the set: $$\{g_2,\hspace{.1cm} g_4,\hspace{.1cm} g_5,\hspace{.1cm} g_9,\hspace{.1cm} g_{10},\hspace{.1cm} g_{16},\hspace{.1cm} g_{21},\hspace{.1cm} g_{23},\hspace{.1cm} g_{25},\hspace{.1cm} g_{27},\hspace{.1cm} g_{29},\hspace{.1cm} g_{32},\hspace{.1cm} g_{34}\}$$
\vskip.1in

\noindent Note that the idempotent action from the other copy of the algebra $\mathcal{A}_\rho$ does not impact how generators are tensored in $\partial^{\boxtimes}.$ A reader disinterested in or otherwise well-acquainted with taking a box tensor product who wishes to skip this explanation is directed to the final full tensor product given in Figures \ref{fig:dotfig} and \ref{fig:bigfig}. 
For simplification, we establish the following convention: we will write $\alpha \otimes g_i$ as simply $\alpha g_i$ for any generator $\alpha \in \CFAh(X_{4_1})$.

It will hopefully be clear to the reader from context where a $\otimes$ sign has been omitted. We note that we will maintain using the $\otimes$ sign between an algebraic element and a generator; we omit it only between two generators.
Following this notation, we are now ready to compute $\partial^{\boxtimes}(ag_i)$.  The only nonzero multiplication map on $a$ is
$m_2(a,\sigma_3) = z$.
The relevant parts of the differential map of the type DD module are given by:
\vskip.2in
\begin{itemize}
\item[] $d(g_2) = \sigma_{3}\rho_3 \otimes g_6$
\item[] $d(g_4) = \rho_2 \otimes g_{21} + \sigma_{3} \otimes g_{26}$
\item[] $d(g_9) = \rho_{23} \otimes g_{25}$
\item[] $d(g_{10}) = \sigma_{3} \otimes g_{33}$
\item[] $d(g_{16}) = \sigma_{3} \otimes g_{22}$
\item[] $d(g_{21}) = \sigma_{3} \otimes g_{15}$
\item[] $d(g_{25}) = \sigma_{3}\rho_{23} \otimes g_8 +  \sigma_{3}\rho_{23} \otimes g_{24}$
\item[] $d(g_{27}) = \rho_3 \otimes g_9$
\item[] $d(g_{29}) = \rho_3 \otimes g_4 + \sigma_{3} \otimes g_{18}$
\item[] $d(g_{32}) = \rho_2 \otimes g_2 + \sigma_{3} \otimes g_{20}$
\item[] $d(g_{34}) = \rho_3 \otimes g_{32} + \sigma_{3} \otimes g_{13}$
\end{itemize}

\begin{figure}
\resizebox{16cm}{!}{

\begin{tikzpicture}
[>=stealth,
   shorten >=1pt,
   node distance=3cm,
   on grid,
   auto,
   every state/.style={draw=black!60, fill=black!5, very thick}
 ]

\node[draw=none,fill=none] (mid)                                {$ag_2$};
\node[draw=none,fill=none] (right)      [right=of mid]          {$zg_6$};
\node[draw=none, fill=none] (mid2)      [below=1cm of mid]          {$ag_4$};     
\node[draw=none, fill=none] (right2)    [right=of mid2]         {$zg_{26}$};
\node[draw=none, fill=none] (mid2')     [below=1cm of right2]         {$ag_{21}$};
\node[draw=none, fill=none] (mid3)      [right=2cm of right]        {$ag_9$};
\node[draw=none, fill=none] (right3)    [right=of mid3]         {$ag_{25}$};
\node[draw=none, fill=none] (mid4)      [below=1cm of mid3]         {$ag_{10}$};
\node[draw=none, fill=none] (right4)    [right=of mid4]         {$zg_{33}$};
\node[draw=none, fill=none] (mid5)      [below=1cm of mid4]         {$ag_{16}$};
\node[draw=none, fill=none] (right5)    [right=of mid5]         {$zg_{22}$};
\node[draw=none, fill=none] (mid6)      [below=1cm of mid5]         {$ag_{21}$};  
\node[draw=none, fill=none] (right6)    [right=of mid6]         {$zg_{15}$};
\node[draw=none, fill=none] (mid7)      [right=2cm of right3]         {$ag_{25}$};
\node[draw=none, fill=none] (right7)    [right=of mid7]         {$zg_8$};
\node[draw=none, fill=none] (mid7')     [below=1cm of right7]         {$zg_{24}$};
\node[draw=none, fill=none] (mid8)      [below=2cm of mid7]        {$ag_{27}$};   
\node[draw=none, fill=none] (right8)    [right=of mid8]         {$ag_9$};
\node[draw=none, fill=none] (mid9)      [below=1cm of mid8]         {$ag_{29}$};  
\node[draw=none, fill=none] (right9)    [right=of mid9]         {$ag_4$};
\node[draw=none, fill=none] (mid9')     [below=1cm of right9]         {$zg_{18}$};
\node[draw=none, fill=none] (mid10)      [right=2cm of right7]       {$ag_{32}$};    
\node[draw=none, fill=none] (right10)    [right=of mid10]       {$ag_2$};
\node[draw=none, fill=none] (mid10')    [below=1cm of right10]        {$zg_{20}$};
\node[draw=none, fill=none] (mid11)      [below=2cm of mid10]      {$ag_{34}$};     
\node[draw=none, fill=none] (right11)    [right=of mid11]       {$ag_{32}$};
\node[draw=none, fill=none] (mid11')    [below=1cm of right11]        {$zg_{13}$};

 \path[->]
 
(mid)            edge               node            {$D_3$}      (right)
(mid2)          edge                node            {}              (right2)
(mid2)          edge[bend right]     node            {$D_2$}      (mid2')
(mid3)          edge                node            {$D_{23}$}   (right3)
(mid4)          edge                node            {}              (right4)
(mid5)          edge                node            {}              (right5)
(mid6)          edge                node            {}              (right6)
(mid7)          edge                node            {$D_{23}$}   (right7)
(mid7)          edge[bend right]    node            {$D_{23}$}   (mid7')
(mid8)          edge                node            {$D_3$}      (right8)
(mid9)          edge                node            {$D_3$}      (right9)
(mid9)         edge[bend right]     node            {}              (mid9')
(mid10)         edge                node            {$D_2$}      (right10)
(mid10)         edge[bend right]    node            {}              (mid10')
(mid11)         edge                node            {$D_3$}      (right11)
(mid11)         edge[bend right]    node            {}              (mid11')

 ;

\end{tikzpicture}

}
\caption{Result of $\partial^{\boxtimes}(a \otimes g_{i})$}
\label{fig:egtensor}
\end{figure}
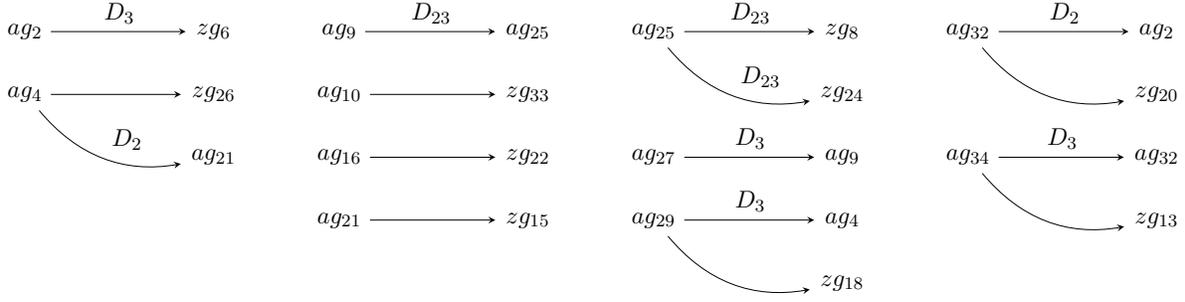
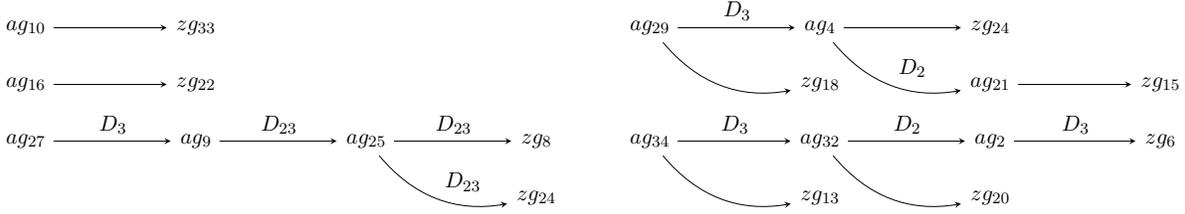
\begin{figure}
\resizebox{16cm}{!}{

\begin{tikzpicture}
[>=stealth,
   shorten >=1pt,
   node distance=3cm,
   on grid,
   auto,
   every state/.style={draw=black!60, fill=black!5, very thick}
 ]

\node[draw=none, fill=none]     (mid)                                   {$ag_{10}$};
\node[draw=none, fill=none]     (right)     [right=of mid]              {$zg_{33}$};
\node[draw=none, fill=none]     (mid2)      [below=1cm of mid]          {$ag_{16}$};
\node[draw=none, fill=none]     (right2)    [right=of mid2]             {$zg_{22}$};
\node[draw=none, fill=none]     (left3)     [below=1cm of mid2]         {$ag_{27}$};
\node[draw=none, fill=none]     (mid3)      [right=of left3]              {$ag_{9}$}; 
\node[draw=none, fill=none]     (right3)    [right=of mid3]              {$ag_{25}$}; 
\node[draw=none, fill=none]     (right3')    [right=of right3]          {$zg_{8}$}; 
\node[draw=none, fill=none]     (right3'')   [below=1cm of right3']     {$zg_{24}$};
\node[draw=none, fill=none]     (left4)     [right=8cm of right]        {$ag_{29}$};
\node[draw=none, fill=none]     (mid4)     [right=of left4]             {$ag_{4}$};
\node[draw=none, fill=none]     (mid4')     [below=1cm of mid4]         {$zg_{18}$};
\node[draw=none, fill=none]     (right4)     [right=of mid4]            {$zg_{24}$};
\node[draw=none, fill=none]     (right4')     [below=1cm of right4]     {$ag_{21}$};
\node[draw=none, fill=none]     (right4'')     [right=of right4']         {$zg_{15}$};
\node[draw=none, fill=none]     (left5)     [below=2cm of left4]        {$ag_{34}$};
\node[draw=none, fill=none]     (mid5)     [right=of left5]             {$ag_{32}$};
\node[draw=none, fill=none]     (mid5')     [below=1cm of mid5]         {$zg_{13}$};
\node[draw=none, fill=none]     (right5)     [right=of mid5]            {$ag_{2}$};
\node[draw=none, fill=none]     (right5')     [below=1cm of right5]     {$zg_{20}$};
\node[draw=none, fill=none]     (right5'')     [right=of right5]        {$zg_{6}$};

 \path[->]
 
(mid)          edge                node                {}                      (right)
(mid2)         edge                node                {}                      (right2)
(left3)         edge                node                {$D_3$}              (mid3)
(mid3)          edge                node                {$D_{23}$}           (right3)
(right3)         edge               node                {$D_{23}$}           (right3')
(right3)        edge[bend right]     node                {$D_{23}$}           (right3'')
(left4)         edge                node                {$D_3$}              (mid4)
(left4)         edge[bend right]    node                {}                      (mid4')
(mid4)          edge                node                {}                      (right4)
(mid4)          edge[bend right]    node                {$D_2$}              (right4')
(right4')       edge                node                {}                      (right4'')
(left5)         edge                node                {$D_3$}              (mid5)
(left5)         edge[bend right]    node                {}                      (mid5')
(mid5)          edge                node                {$D_2$}              (right5)
(mid5)          edge[bend right]    node                {}                      (right5')
(right5)         edge                node                {$D_3$}              (right5'')
;

\end{tikzpicture}

}
\caption{Result of $\partial^{\boxtimes}(a \otimes g_{i})$ after concatenation}
\label{fig:egtensorcon}
\end{figure}

Note that we excluded any generators $g_i \in \CFDDh(X(L_Q))$ for which $\partial^{\boxtimes}(a \otimes g_i) = 0$, whether by mismatched idempotent action or mismatched algebra elements as they will not contribute any edges to the tensor complex. By performing the box tensor product as described in Definition 3.4, we generate the edges shown in Figure \ref{fig:egtensor}. We concatenate edges to arrive at the complex section shown in Figure \ref{fig:egtensorcon}. We continue with the box tensor process for all of the generators in $\CFAh(X_{4_1})$ until we arrive at the tensor complex, pictured in Figures \ref{fig:dotfig} and \ref{fig:bigfig}. Figure \ref{fig:bigfig} gives the result of tensoring the type DD structure with the box component of $\CFAh(X_{4_1})$, and Figure \ref{fig:dotfig} gives the result of tensoring the type DD structure with the lone generator component of $\CFAh(X_{4_1})$.

We note that these are not the complete tensor complexes in a sense; performing the tensor product results in some additional standalone unlabeled edges, but we can just homotope these edges away. In fact, our goal will be to homotope away all unlabeled edges in the two diagrams, so we can more easily produce the knot Floer complex from this data. We accomplish this by utilizing the following edge cancellation lemma, which makes use of the fact that we are working over $\mathbb{F}_2$ to peel away any edges into or out of an unlabelled edge.

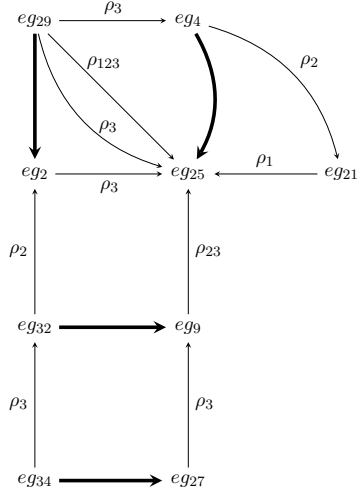
\begin{figure}
    \centering
    \resizebox{5cm}{!}{

\begin{tikzpicture} 
[>=stealth,
   shorten >=1pt,
   node distance=3cm,
   on grid,
   auto,
   every state/.style={draw=black!60, fill=black!5, very thick}
  ]
 \node[draw=none,fill=none] (mid)                             {$eg_{25}$};
 \node[draw=none,fill=none] (left)      [left=of mid]          {$eg_{2}$};
 \node[draw=none,fill=none] (right)     [right=of mid]          {$eg_{21}$};
 \node[draw=none,fill=none] (mid2)      [below=of mid]        {$eg_{9}$};
 \node[draw=none,fill=none] (mid3)    [below=of mid2]       {$eg_{27}$};
 \node[draw=none,fill=none] (left3)      [left=of mid3]        {$eg_{34}$};
 \node[draw=none,fill=none] (left2)     [above=of left3]         {$eg_{32}$};
 \node[draw=none,fill=none] (aleft)     [above=of left]        {$eg_{29}$};
 \node[draw=none,fill=none] (amid)     [above=of mid]        {$eg_{4}$};
 
\path[->]
    (left)       edge               node[swap]          {$\rho_3$}         (mid)
    (right)     edge                node[swap]           {$\rho_1$}      (mid)
    (mid2)      edge                 node[swap]               {$\rho_{23}$}     (mid)
    (mid3)      edge                node[swap]          {$\rho_3$}       (mid2)
    (left3)    edge[line width=2]   node                  {}         (mid3)
    (left3)     edge                node                {$\rho_3$}       (left2)
    (left2)     edge                node                {$\rho_2$}         (left)
    (aleft)     edge[line width=2]  node                {}              (left)
    (aleft)     edge               node[near start, yshift=-.2cm]           {$\rho_{123}$}     (mid)
    (aleft)     edge[bend right]    node[yshift=-.4cm, xshift=.2cm]             {$\rho_3$}         (mid)
    (aleft)     edge                node                {$\rho_3$}         (amid)
    (amid)      edge[bend left, line width=2]     node                {}              (mid)
    (amid)      edge[bend left]    node                {$\rho_2$}         (right)
    (left2)     edge[line width=2]                node                {}              (mid2)
    ;
    
\end{tikzpicture}

}
    \caption{Tensor complex part 1}
    \label{fig:dotfig}
\end{figure}

\begin{figure}
\centering
\input{BIGmodule.tex}
\caption{Tensor complex part 2}
\label{fig:bigfig}
\end{figure}

\begin{table}[hb]
\begin{tabular}{c c c}
\hline
  \sc{Case I:}  &  & \begin{tikzpicture} 
[>=stealth,
   shorten >=1pt,
   node distance=2cm,
   on grid,
   auto,
   every state/.style={draw=black!60, fill=black!5, very thick}
  ]

\node[draw=none, fill=none] (mid)                           {\textbf{$a$}};
\node[draw=none, fill=none] (right)     [right=1.5cm of mid]      {\textbf{$b$}};
\node[draw=none, fill=none] (down)      [below=of mid]      {\textbf{$c$}};
\node[draw=none, fill=none] (diag)      [right=1.5cm of down]     {\textbf{$d$}};
\node[draw=none, fill=none] (symb)      [right=3cm of mid, yshift=-1cm]   {\large \textbf{$\simeq$}};

\node[draw=none, fill=none] (mid2)      [right=4.5cm of mid]  {\textbf{$a+ \rho_{i} \otimes b$}};
\node[draw=none, fill=none] (right2)    [right=of mid2, xshift=-1cm, yshift=-.5cm]     {\textbf{$b$}};
\node[draw=none, fill=none] (down2)     [below=of mid2]              {\textbf{$c+ \rho_{j} \otimes d$}};
\node[draw=none, fill=none] (diag2)     [right=of down2]    {\textbf{$d$}};

\path[->]
(mid)           edge            node[swap]      {$D_{i}$}        (down)
(right)         edge[line width=1.5]            node            {}                  (down)
(right)         edge            node            {$D_{j}$}        (diag)
(right2)        edge[line width=1.5]            node            {}                  (down2)
;

\path[->, draw]
(mid2)  to [out=0, in=90, looseness=1.5]    node    {$D_{ij}$} (diag2)
;
\end{tikzpicture} \\
  \hline
  \sc{Case II:} & &  \resizebox{8cm}{!}{

\begin{tikzpicture} 
[>=stealth,
   shorten >=1pt,
   node distance=2cm,
   on grid,
   auto,
   every state/.style={draw=black!60, fill=black!5, very thick}
  ]

\node[draw=none, fill=none] (midA)                               {\textbf{$a$}};
\node[draw=none, fill=none] (rightA)         [right=of midA]      {\textbf{$b$}};
\node[draw=none, fill=none] (downA)          [below=of midA]      {\textbf{$c$}};
\node[draw=none, fill=none] (diagA)          [right=of downA]     {\textbf{$d$}};
\node[draw=none, fill=none] (symbA)      [right=3.5cm of midA, yshift=-1cm]   {\large \textbf{$\simeq$}};

\node[draw=none, fill=none] (midA2)          [right=5cm of midA]     {\textbf{$a$}};
\node[draw=none, fill=none] (rightA2)        [right=of midA2]         {\textbf{$b$}};
\node[draw=none, fill=none] (downA2)         [below=of midA2]        {\textbf{$c$}};
\node[draw=none, fill=none] (diagA2)         [right=of downA2]       {\textbf{$d + \rho_{k} \otimes a$}};

\path[->]
(midA)       edge[line width=1.5]       node        {}             (downA)
(rightA)     edge[line width=1.5]       node        {}             (diagA)
(rightA)     edge                       node[swap]  {$D_{k}$}   (midA)
(diagA)      edge                       node        {$D_{k}$}   (downA)
(midA2)      edge[line width=1.5]       node        {}             (downA2)
(rightA2)    edge[line width=1.5]       node        {}             (diagA2)
;

\end{tikzpicture}

} \\
  \hline
  \sc{Case III:} & & \resizebox{9cm}{!}{

\begin{tikzpicture} 
[>=stealth,
   shorten >=1pt,
   node distance=2cm,
   on grid,
   auto,
   every state/.style={draw=black!60, fill=black!5, very thick}
  ]  
  
  \node[draw=none, fill=none]   (midB)                          {\textbf{$c$}};
  \node[draw=none, fill=none]   (downB)     [below=2cm of midB]      {\textbf{$b$}};
  \node[draw=none, fill=none]   (rightB)    [right=1.75cm of downB]     {\textbf{$a$}};
  \node[draw=none, fill=none]   (leftB)     [left=1.75cm of downB]      {\textbf{$d$}};
  \node[draw=none, fill=none]   (symbB)     [right=2.5cm of midB, yshift=-1cm]   {\large \textbf{$\simeq$}};

  \node[draw=none, fill=none]   (midB2)   [right=6cm of midB, yshift=-.5cm] {\textbf{$c+ \rho_{i} \otimes a$}};
  \node[draw=none, fill=none]   (downB2)  [below=1cm of midB2]      {\textbf{$b$}};
  \node[draw=none, fill=none]   (rightB2) [right=1.75cm of downB2]     {\textbf{$a$}};
  \node[draw=none, fill=none]   (leftB2)  [left=1.75cm of downB2]      {\textbf{$d+ \rho_{j} \otimes a$}};
  
  \path[->]
  (leftB)        edge                           node        {$D_{j}$}   (downB)
  (rightB)       edge[line width=1.5]           node        {}             (downB)
  (midB)         edge                           node        {$D_{i}$}   (downB)
  (rightB2)      edge[line width=1.5]           node        {}             (downB2)
  ;

 \end{tikzpicture}
 
 } \\
  \hline
  \caption{Our three cases of edge cancellation}
  \label{edgecx}
\end{tabular}
\end{table}

\begin{lemma}\label{lem:edgecx} In our module, let $(v_2, v_1)$ be the unlabeled edge from basis element $v_2$ to basis element $v_1$. Then, for each edge into $v_1$, denoted $(a_j, v_1)$ and labeled by $D_j$ with $j \in \{1, 2, 3, 12, 23, 123\}$, for all edges out of $v_2$, denoted $(v_2, b_i)$ and labeled by $D_i$ with $i \in \{1, 2, 3, 12, 23, 123\}$, we perform the following change of basis:
$$a_{j}^{'} = a_j + \rho_{j} \otimes v_2, \hspace{1in} v_{1}^{'} = v_1 + \rho_{i} \otimes b_i$$

The first change lets us cancel the edge $(a_j, v_1)$, since $char(\mathbb{F}_2) = 2$. Whenever $ij \neq 0$ in $\mathcal{A}$, this will also generate a new edge $(a_j+ D_{j}(v_2), b_i)$ labeled with $D_{ij}$. The second change lets us cancel the edge $(v_2, b_i)$.
Then, by the compatibility conditions satisfied by $\mathcal{A}_{\infty}$ modules, once all of the basis changes are performed, all other edges out of $v_1$ and all other edges into $v_2$ will be eliminated from the diagram. At the end of the process, the unlabeled edge $$(v_2, v_1 + \sum_{i \in I} D_{i}(b_i))$$ can be homotoped away from the diagram. 
\end{lemma}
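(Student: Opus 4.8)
The statement is an instance of Gaussian elimination (equivalently, the cancellation lemma for type D structures, carried out inside the box tensor product), where an ``unlabeled edge'' $(v_2,v_1)$ is exactly a component of the structure map whose algebra coefficient is the idempotent $1\in\mathcal{A}$. The plan is to realize the prescribed substitutions as a genuine isomorphism of the structure, track its effect arrow by arrow, and then split off the resulting two-step acyclic summand. First I would fix notation by writing the structure map as $\delta^1(x)=\sum_I \rho_I\otimes D_I(x)$ with $I\in\{\varnothing,1,2,3,12,23,123\}$, $\rho_\varnothing=1$, and $D_\varnothing=\partial$ the ``internal'' differential recording the unlabeled edges. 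Since the torus algebra has $\mu_1=0$, the structure relation $(\mu_2\otimes\id)\circ(\id\otimes\delta^1)\circ\delta^1=0$ decomposes, according to the value $\rho_K$ of the algebra product, into the family $\sum_{\rho_J\rho_I=\rho_K}D_J D_I=0$; in particular $\partial^2=0$, each $D_k$ anticommutes with $\partial$, and $D_jD_i=\partial D_{ij}+D_{ij}\partial$ whenever $\rho_i\rho_j=\rho_{ij}\neq 0$. These are the ``$\mathcal{A}_\infty$ compatibility conditions'' the lemma invokes.

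Next I would package the two substitutions into a single endomorphism $f=\id+h$, where $h$ sends $a_j\mapsto\rho_j\otimes v_2$ for each edge $(a_j,v_1)$ into $v_1$, sends $v_1\mapsto\sum_i\rho_i\otimes b_i$ over the edges $(v_2,b_i)$ out of $v_2$, and vanishes on the remaining basis elements. Because $h$ takes values in $\mathcal{A}_+\otimes N$ and the augmentation ideal $\mathcal{A}_+=\langle\rho_1,\rho_2,\rho_3,\rho_{12},\rho_{23},\rho_{123}\rangle$ of the torus algebra is nilpotent (every product of four basis elements vanishes), the map $f$ is invertible over $\F$ with $f^{-1}=\id+h+h^2+h^3$, so conjugating $\delta^1$ by $f$ yields an isomorphic structure on the same underlying space. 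Computing this conjugate term by term: the $I=\varnothing$ contribution of $\rho_j\cdot\delta^1(v_2)$ is $\rho_j\otimes v_1$, which cancels the original arrow $(a_j,v_1)$ since $\operatorname{char}\F=2$; the $I=\{i\}$ contributions give $\rho_{ij}\otimes b_i$, the new arrow $(a_j',b_i)$ labeled $D_{ij}$, present exactly when $\rho_{ij}\neq 0$, i.e. $ij\in\{12,23,123\}$; and the symmetric computation for the substitution $v_1\mapsto\sum_i\rho_i\otimes b_i$ cancels each arrow $(v_2,b_i)$. It then remains to check, using the relations above read off at the vertices adjacent to $v_1$ and $v_2$, that all other edges out of $v_1$ and into $v_2$ are killed by $f$; granting this, $v_2$ and $w:=v_1+\sum_i D_i(b_i)$ span a subcomplex on which $\delta^1$ acts only by the unlabeled arrow $v_2\mapsto w$, with nothing connecting it to the rest, i.e. an acyclic direct summand, which we discard to obtain the reduced complex. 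Isomorphism (hence homotopy) type is preserved at every step.

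The main obstacle is precisely that last verification: showing that the substitutions, whose stated purpose is only to cancel the edges into $v_1$ and out of $v_2$, simultaneously annihilate \emph{every} other edge incident to $v_1$ or $v_2$. This is not mere bookkeeping---it genuinely uses the structure relations---and I would organize it by the local configurations of Table~\ref{edgecx}: in each of Cases I, II, III one reads the relation $\sum_{\rho_J\rho_I=\rho_K}D_JD_I=0$ at the vertices neighboring the unlabeled edge and finds that the spurious arrows are exactly the composites $f$ has already absorbed, so they cancel over $\F$. Two further points deserve care: when several edges enter $v_1$ or leave $v_2$ the substitutions must be performed consistently (which is why one packages them into the single map $f$ rather than applying them sequentially), and when $\rho_i\rho_j=0$ in $\mathcal{A}$ no new arrow is created, so the labels $D_{ij}$ appearing in the conclusion range only over $ij\in\{12,23,123\}$.
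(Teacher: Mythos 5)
Your proposal is correct and follows essentially the same route as the paper: the paper likewise treats the unlabeled edge by a change of basis (Gaussian elimination over $\F$), generating the new $D_{ij}$ arrows from the zig-zags through the canceled pair and then homotoping away the resulting two-step acyclic summand, with the elimination of the remaining edges incident to $v_1$ and $v_2$ attributed, exactly as you do, to the type D structure relations. Your packaging of the substitutions into a single invertible morphism $f=\id+h$ and the explicit use of the relations $\sum D_JD_I=0$ is in fact more careful than the paper's treatment, which only states the procedure and illustrates it via the three cases of Table~\ref{edgecx} and a worked example.
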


We provide in Table \ref{edgecx} a diagrammatic description of the three cases in which we use Lemma \ref{lem:edgecx} for our particular diagram. Note that these three cases are all generalized by the above lemma.

We zoom in on a portion of the unsimplified complex and explicitly provide the changes of basis that allow us to isolate and then homotope away our unlabeled edges. In Figure \ref{fig:zoomUnsimp}, we isolate a section of $\CFDh(X_{Q(4_1)})$. Then, in Figure \ref{fig:zoomSimp}, we illustrate the way our basis should be changed to isolate the unlabeled edges.

\begin{figure}[h!]
\begin{tikzpicture}
[>=stealth,
   shorten >=1pt,
   node distance=1.85cm,
   on grid,
   auto,
   every state/.style={draw=black!60, fill=black!5, very thick}
  ]  

\node[draw=none,fill=none]      (11)                        {$xg_{31}$};
\node[draw=none,fill=none]      (12)        [right=of 11]   {$cg_{23}$};
\node[draw=none,fill=none]      (13)        [right=of 12]   {$yg_{8}$};
\node[draw=none,fill=none]      (14)        [right=of 13]   {$yg_{20}$};
\node[draw=none,fill=none]      (15)        [right=of 14]   {$yg_{13}$};
\node[draw=none,fill=none]      (16)        [right=of 15]   {$dg_{27}$};

\node[draw=none,fill=none]      (21)        [below=of 11]   {$cg_{10}$};
\node[draw=none,fill=none]      (24)        [right=of 23]   {$yg_{6}$};
\node[draw=none,fill=none]      (25)        [right=of 24]   {$dg_{25}$};
\node[draw=none,fill=none]      (26)        [right=of 25]   {$dg_{9}$};

\node[draw=none,fill=none]      (31)        [below=of 21]   {$yg_{12}$};
\node[draw=none, fill=none]     (32)        [right=of 31]   {};
\node[draw=none, fill=none]     (33)        [right=of 32]   {};
\node[draw=none,fill=none]      (34)        [right=of 33]   {$yg_{30}$};
\node[draw=none,fill=none]      (35)        [right=of 34]   {$yg_{15}$};

\node[draw=none, fill=none]     (44)        [below=of 34]   {};
\node[draw=none, fill=none]     (45)        [below=of 35]   {};
\node[draw=none, fill=none]     (36)        [below=of 26]   {};

\node[draw=none, fill=none]     (41)        [below=of 31] {};

\path[->]
(12)        edge[line width=1.5]   node            {}              (13)
(15)        edge                 node[swap]     {$D_{3}$}         (14)
(15)        edge[line width=1.5]  node            {}              (16)

(21)        edge[line width=1.5]  node            {}              (11)
(14)        edge                node            {$D_{23}$}      (24)
(16)        edge                node            {$D_{3}$}       (26)

(24)        edge                node            {$D_{2}$}       (34)
(21)        edge[line width=1.5]  node            {}              (31)

(24)        edge[line width=1.5]  node            {}              (25)
(26)        edge                node            {$D_{23}$}      (25)
(35)        edge                node[swap]            {$D_{1}$}       (25)
;

\path[->, loosely dashed, ultra thin]
(45)        edge    node        {}      (35)
(36)        edge    node        {}      (25)
(44)        edge    node        {}      (34)
(41)        edge    node        {}      (31)
(32)        edge    node        {}      (31)
(34)        edge    node        {}      (33)
;

\path[->, draw]
(21)  to [in=230, out=45] node[near start, xshift = 15pt] {$D_{23}$}  (13)
;

\path[->, draw]
(34)  to [in=0, out=160]    node[near end, xshift=4pt]  {$D_{123}$}   (13)
;

\path[->, draw, line width=1.5]
(14)  to [in=135, out=315]  node               {}              (26)
;

\end{tikzpicture}
\caption{Unsimplified portion of $\CFDh(X_{Q(4_1)}$}
\label{fig:zoomUnsimp}
\end{figure}

\begin{figure}[h!]
\begin{tikzpicture}
[>=stealth,
   shorten >=1pt,
   node distance=1.85cm,
   on grid,
   auto,
   every state/.style={draw=black!60, fill=black!5, very thick}
  ]

\node[draw=none,fill=none, text centered, text width=4.5em]       (11)    {$xg_{31}\hspace{.1cm}+\rho_{23}\otimes yg_{8}$\\ $+\hspace{.1cm}yg_{12}$};
\node[draw=none,fill=none]      (12)        [right=of 11]   {$cg_{23}$};
\node[draw=none,fill=none]      (13)        [right=of 12]   {$yg_{8}$};
\node[draw=none,fill=none]      (14)        [right=of 13]   {$yg_{20}$};
\node[draw=none,fill=none]      (15)        [right=of 14]   {$yg_{13}$};
\node[draw=none,fill=none, text centered, text width=4.5em, xshift=0.8cm]      (16)        [right=of 15]   {$dg_{27}+ \rho_{3} \otimes yg_{20}$};

\node[draw=none,fill=none]      (21)        [below=of 11]   {$cg_{10}$};
\node[draw=none,fill=none, xshift=-.7cm]      (24)        [right=of 23]   {$yg_{6}$};
\node[draw=none,fill=none, text centered, text width=4.5em]      (25)        [right=2.55cm of 24]   {$dg_{25}+\rho_{2} \otimes yg_{30}$};
\node[draw=none,fill=none, text centered, text width=4.5em]      (26)        [right=2.1cm of 25]   {$dg_{9}+\rho_{23} \otimes y_{6}$};

\node[draw=none,fill=none]      (31)        [below=of 21]   {$yg_{12}$};
\node[draw=none, fill=none]     (32)        [right=of 31]   {};
\node[draw=none, fill=none]     (33)        [right=of 32]   {};
\node[draw=none,fill=none,text centered, text width=4.5em]      (34)        [right=of 33]   {$yg_{30}+\rho_{123}\otimes cg_{23}$};
\node[draw=none,fill=none]      (35)        [right=of 34]   {$yg_{15}$};

\node[draw=none, fill=none]     (44)        [below=of 34]   {};
\node[draw=none, fill=none]     (45)        [below=of 35]   {};
\node[draw=none, fill=none]     (36)        [below=of 26]   {};

\node[draw=none, fill=none]     (41)        [below=of 31] {};

\path[->]
(12)        edge[line width=1.5]   node            {}              (13)
(15)        edge[line width=1.5]  node            {}              (16)

(21)        edge[line width=1.5]  node            {}              (11)


(24)        edge[line width=1.5]  node            {}              (25)
;

\path[->, loosely dashed, ultra thin]
(45)        edge    node        {}      (35)
(36)        edge    node        {}      (25)
(44)        edge    node        {}      (34)
(41)        edge    node        {}      (31)
(32)        edge    node        {}      (31)
(34)        edge    node        {}      (33)
;



\path[->, draw, line width=1.5]
(14)  to [in=135, out=315]  node               {}              (26)
;

\end{tikzpicture}
\caption{Simplified portion of $\CFDh(X_{Q(4_1)})$}
\label{fig:zoomSimp}
\end{figure}

We continue this process until all unlabeled edges are pruned away from the main complex and can be homotoped away. The final result of this process is shown as the connected components in Figure \ref{fig:boxes}.

\begin{figure}[hp]
\begin{center}
\resizebox{16cm}{!}{
\begin{tikzpicture} 
[>=stealth,
   shorten >=1pt,
   node distance=4cm,
   on grid,
   auto,
   every state/.style={draw=black!60, fill=black!5, very thick}
  ]
 \node[draw=none,fill=none] (mid)                             {\Large $xg_{28}$};
 \node[draw=none,fill=none] (left)      [left=of mid]          {\Large $xg_{19}$};
 \node[draw=none,fill=none] (right)     [right=of mid]          {\Large $xg_{11}$};
 \node[draw=none,fill=none] (right2)    [below=of right]        {\Large $xg_{17}$};
 \node[draw=none,fill=none] (right3)    [below=of right2]       {\Large $cg_{29}$};
 \node[draw=none,fill=none] (mid3)      [left=of right3]        {\Large $cg_4$};
 \node[draw=none,fill=none] (left2)     [below=of left]         {\Large $xg_{14}$};
 \node[draw=none,fill=none] (left3)     [below=of left2]        {\Large $cg_{21}$};
 
\path[->]
    (mid)       edge                node[swap]          {\Large $D_2$}     (left)
    (right)     edge                node[swap]          {\Large $D_3$}       (mid)
    (right)     edge                node                {\Large $D_1$}     (right2)
    (right3)    edge                node[swap]          {\Large $D_{123}$}  (right2)
    (right3)    edge                node                {\Large $D_3$}     (mid3)
    (mid3)      edge                node                {\Large $D_2$}     (left3)
    (left3)     edge                node              {\Large $D_{123}$} (left2)
    (left)      edge                node[swap]          {\Large $D_{1}$}   (left2)
    ;
    
\end{tikzpicture}

\begin{tikzpicture} 
[>=stealth,
   shorten >=1pt,
   node distance=4cm,
   on grid,
   auto,
   every state/.style={draw=black!60, fill=black!5, very thick}
  ]
 \node[draw=none,fill=none] (mid)                             {\Large $xg_{3}$};
 \node[draw=none, fill=none,text width=5.5em, text centered] (left)      [left=of mid]  {\Large $xg_{1}+$ \\ $\rho_{1}\otimes cg_{16}$};
 \node[draw=none,fill=none] (right)     [right=of mid]          {\Large $xg_{7}$};
 \node[draw=none,fill=none, text width=7em, text centered] (right2)    [below=of right]        {\Large $xg_{12}+$ \\ $\rho_{23}\otimes xg_{24}$};
 \node[draw=none,fill=none] (right3)    [below=of right2]       {\Large $cg_{34}$};
 \node[draw=none,fill=none] (mid3)      [left=of right3]        {\Large $cg_{32}$};
 \node[draw=none,fill=none, text width=7em, text centered] (left2)     [below=of left]         {\Large $xg_{24}+$\\$xg_{8}+yg_{17}$};
 \node[draw=none,fill=none, text width=7em, text centered] (left3)     [below=of left2]        {\Large $cg_{2}+xg_{30}$};
 
\path[->]
    (mid)       edge                node[swap]          {\Large $D_2$}     (left)
    (right)     edge                node[swap]          {\Large $D_3$}       (mid)
    (right)     edge                node                {\Large $D_1$}     (right2)
    (right3)    edge                node[swap]          {\Large $D_{123}$}  (right2)
    (right3)    edge                node                {\Large $D_3$}     (mid3)
    (mid3)      edge                node                {\Large $D_2$}     (left3)
    (left3)     edge                node              {\Large $D_{123}$} (left2)
    (left)      edge                node[swap]          {\Large $D_{1}$}   (left2)
    ;
    
\end{tikzpicture}

\begin{tikzpicture} 
[>=stealth,
   shorten >=1pt,
   node distance=4cm,
   on grid,
   auto,
   every state/.style={draw=black!60, fill=black!5, very thick}
  ]
 \node[draw=none,fill=none] (mid)                             {\Large $zg_{28}$};
 \node[draw=none,fill=none] (left)      [left=of mid]          {\Large $zg_{19}$};
 \node[draw=none,fill=none] (right)     [right=of mid]          {\Large $zg_{11}$};
 \node[draw=none,fill=none] (right2)    [below=of right]        {\Large $zg_{17}$};
 \node[draw=none,fill=none] (right3)    [below=of right2]       {\Large $dg_{29}$};
 \node[draw=none,fill=none] (mid3)      [left=of right3]        {\Large $dg_4$};
 \node[draw=none,fill=none] (left2)     [below=of left]         {\Large $zg_{14}$};
 \node[draw=none,fill=none] (left3)     [below=of left2]        {\Large $dg_{21}$};
 
\path[->]
    (mid)       edge                node[swap]          {\Large $D_2$}     (left)
    (right)     edge                node[swap]          {\Large $D_3$}       (mid)
    (right)     edge                node                {\Large $D_1$}     (right2)
    (right3)    edge                node[swap]          {\Large $D_{123}$}  (right2)
    (right3)    edge                node                {\Large $D_3$}     (mid3)
    (mid3)      edge                node                {\Large $D_2$}     (left3)
    (left3)     edge                node              {\Large $D_{123}$} (left2)
    (left)      edge                node[swap]          {\Large $D_{1}$}   (left2)
    ;
    
\end{tikzpicture}

}
\vskip.4in
\resizebox{16cm}{!}{
\begin{tikzpicture} 
[>=stealth,
   shorten >=1pt,
   node distance=4cm,
   on grid,
   auto,
   every state/.style={draw=black!60, fill=black!5, very thick}
  ]
 \node[draw=none,fill=none] (mid)                             {\Large $yg_{3}$};
 \node[draw=none,fill=none] (left)      [left=of mid]          {\Large $yg_{1}$};
 \node[draw=none,fill=none] (right)     [right=of mid]          {\Large $yg_{7}$};
 \node[draw=none,fill=none,text width=7em, text centered] (right2)    [below=of right]        {\Large $yg_{12}+$\\$\rho_{23}\otimes yg_{24}$};
 \node[draw=none,fill=none] (right3)    [below=of right2]       {\Large $cg_{27}$};
 \node[draw=none,fill=none] (mid3)      [left=of right3]        {\Large $cg_9$};
 \node[draw=none,fill=none] (left2)     [below=of left]         {\Large $yg_{24}$};
 \node[draw=none,fill=none, text width=7em, text centered] (left3)     [below=of left2]        {\Large $yg_{30}+$\\$\rho_{3}\otimes cg_{25}+$\\$\rho_{123}\otimes cg_{23}$};
 
\path[->]
    (mid)       edge                node[swap]          {\Large $D_2$}     (left)
    (right)     edge                node[swap]          {\Large $D_3$}       (mid)
    (right)     edge                node                {\Large $D_1$}     (right2)
    (right3)    edge                node[swap]          {\Large $D_{123}$}  (right2)
    (right3)    edge                node                {\Large $D_3$}     (mid3)
    (mid3)      edge                node                {\Large $D_2$}     (left3)
    (left3)     edge                node              {\Large $D_{123}$} (left2)
    (left)      edge                node[swap]          {\Large $D_{1}$}   (left2)
    ;
    
\end{tikzpicture}

\begin{tikzpicture} 
[>=stealth,
   shorten >=1pt,
   node distance=4cm,
   on grid,
   auto,
   every state/.style={draw=black!60, fill=black!5, very thick}
  ]
 \node[draw=none,fill=none] (mid)                             {\Large $wg_{3}$};
 \node[draw=none,fill=none] (left)      [left=of mid]          {\Large $wg_{1}$};
 \node[draw=none,fill=none,text width=7em, text centered] (right)     [right=of mid]          {\Large $wg_{7}+$\\$\rho_{1}\otimes dg_{10}$};
 \node[draw=none,fill=none,text width=7em, text centered] (right2)    [below=of right]        {\Large $wg_{12}+$\\$\rho_{23}\otimes wg_{24}$};
 \node[draw=none,fill=none] (right3)    [below=of right2]       {\Large $bg_{27}$};
 \node[draw=none,fill=none] (mid3)      [left=of right3]        {\Large $bg_9$};
 \node[draw=none,fill=none] (left2)     [below=of left]         {\Large $wg_{24}$};
 \node[draw=none,fill=none, text width=7em, text centered] (left3)     [below=of left2]        {\Large $wg_{30}+$\\$\rho_{3}\otimes bg_{25}+$\\$\rho_{123}\otimes bg_{23}+$\\$\rho_{1} \otimes dg_{10}$};
 
\path[->]
    (mid)       edge                node[swap]          {\Large $D_2$}     (left)
    (right)     edge                node[swap]          {\Large $D_3$}       (mid)
    (right)     edge                node                {\Large $D_1$}     (right2)
    (right3)    edge                node[swap]          {\Large $D_{123}$}  (right2)
    (right3)    edge                node                {\Large $D_3$}     (mid3)
    (mid3)      edge                node                {\Large $D_2$}     (left3)
    (left3)     edge                node              {\Large $D_{123}$} (left2)
    (left)      edge                node[swap]          {\Large $D_{1}$}   (left2)
    ;
    
\end{tikzpicture}

\begin{tikzpicture} 
[>=stealth,
   shorten >=1pt,
   node distance=4cm,
   on grid,
   auto,
   every state/.style={draw=black!60, fill=black!5, very thick}
  ]
 \node[draw=none,fill=none,text width=5em, text centered] (mid)                             {\Large $wg_{28}+$\\$xg_3$};
 \node[draw=none,fill=none,text width=5em, text centered] (left)      [left=of mid]          {\Large $wg_{19}+$\\$xg_1$};
 \node[draw=none,fill=none,text width=7em, text centered] (right)     [right=of mid]          {\Large $wg_{11}+xg_{7}+$\\$\rho_{1}\otimes bg_{25}$};
 \node[draw=none,fill=none,text width=7em, text centered] (right2)    [below=of right]        {\Large $wg_{17}+\rho_{23}\otimes xg_{12}+$\\$\rho_{23}\otimes wg_{14}+xg_{8}+xg_{24}$};
 \node[draw=none,fill=none,text width=7em, text centered] (right3)    [below=of right2]       {\Large $zg_{7}+ cg_{34}$\\$+ bg_{2}+\rho_{123}\otimes bg_{25}$};
 \node[draw=none,fill=none] (mid3)      [left=of right3]        {\Large $zg_3$};
 \node[draw=none,fill=none,text width=7em, text centered] (left2)     [below=of left]           {\Large $wg_{14}+$\\$wg_{8}+wg_{24}$};
 \node[draw=none,fill=none, text width=6em, text centered] (left3)     [below=of left2]       {\Large $zg_{1}+$\\$\rho_{1}\otimes bg_{25}$};
 
\path[->]
    (mid)       edge                node[swap]          {\Large $D_2$}     (left)
    (right)     edge                node[swap]          {\Large $D_3$}       (mid)
    (right)     edge                node                {\Large $D_1$}     (right2)
    (right3)    edge                node[swap]          {\Large $D_{123}$}  (right2)
    (right3)    edge                node                {\Large $D_3$}     (mid3)
    (mid3)      edge                node                {\Large $D_2$}     (left3)
    (left3)     edge                node              {\Large $D_{123}$} (left2)
    (left)      edge                node[swap]          {\Large $D_{1}$}   (left2)
    ;
    
\end{tikzpicture}

}
\vskip.4in
\resizebox{10cm}{!}{

\begin{tikzpicture} 
[>=stealth,
   shorten >=1pt,
   node distance=3cm,
   on grid,
   auto,
   every state/.style={draw=black!60, fill=black!5, very thick}
  ]         
 \node[draw=none,fill=none] (mid)                                           {\Large $yg_{28}$};
 \node[draw=none,fill=none] (left)      [left=of mid]                       {\Large $yg_{19}$};
 <vshift
 \node[draw=none, fill=none]    (left2)     [below=4cm of left]        {\Large $yg_{14}$};
  \node[draw=none, fill=none]    (left3)     [below=4cm of left2]       {\Large $cg_{25}$};
  \node[draw=none,fill=none, text width=5em, text centered]  (left4)     [below=4cm of left3]          {\Large $yg_{15}+\rho_{1}\otimes cg_{9}+$\\$\rho_{3} \otimes zg_{14}+\rho_{1}\otimes yg_{6}$};
 \node[draw=none,fill=none] (right)     [right=of mid]                             {\Large $yg_{11}$};
 \node[draw=none,fill=none] (right2)    [below=of right]                             {\Large $yg_{17}$};
 \node[draw=none,fill=none] (right3)    [below=of right2]                         {\Large $cg_{2}$};
 \node[draw=none,fill=none, text width=2.5em, text centered] (mid2)      [right=of right3]        {\Large $xg_6+$\\$cg_{32}$};
 \node[draw=none,fill=none, text width=5.5em, text centered]  (right4)    [right=of mid2]         {\Large $bg_{2}+cg_{34}$\\+$\rho_{123}\otimes dg_{16}$};
 \node[draw=none,fill=none, text width=5em, text centered] (right5)    [below=of right4]            {\Large $zg_{12}+$\\$\rho_{23}\otimes wg_{17}+$\\$\rho_{23} \otimes xg_{12}$};
 \node[draw=none,fill=none] (right6)     [below=of right5]                                          {\Large $dg_{34}$};
 \node[draw=none,fill=none, text width=5em, text centered] (mid3)       [left=4cm of right6]        {\Large $dg_{32}+$\\$\rho_{2}\otimes yg_{18}+$\\$\rho_{2} wg_{15}$};
 \node[draw=none,fill=none]     (mid4)      [left=4cm of mid3]          {\Large $yg_{26}$};
 \node[draw=none,fill=none]     (alone)     [left=6cm of left2, yshift=-3cm]         {\Large $eg_{21}$};

\path[->]
    (mid)       edge                node[swap]          {\Large $D_2$}           (left)
    (right)     edge                node[swap]          {\Large $D_3$}            (mid)
    (right)     edge                node                {\Large $D_1$}          (right2)
    (right3)    edge                node[swap]          {\Large $D_{123}$}      (right2)
    (mid2)      edge                node                {\Large $D_2$}          (right3)
    (right4)      edge              node                {\Large $D_3$}          (mid2)
    (right4)     edge               node                 {\Large $D_{1}$}      (right5)
    (right6)      edge              node[swap]          {\Large $D_{123}$}     (right5)
    (right6)    edge                node                {\Large $D_{3}$}       (mid3)
    (mid3)      edge                node                {\Large $D_{23}$}      (mid4)
    (mid4)      edge                node                {\Large $D_{2}$}       (left4)
    (left4)     edge                node                {\Large $D_{123}$}     (left3)
    (left3)     edge                node                {\Large $D_{23}$}      (left2)
    (left)     edge                node[swap]           {\Large $D_{1}$}       (left2)
    
    ;
    
\path[->, draw]
    (alone)   to [in=-90, out=45, loop, distance=3cm]   node[auto]  {\Large ${D_{12}}$}      (alone)
    ;

\end{tikzpicture}
}
\end{center}
\caption{$\CFDh{(S^{3} \setminus \nu(Q(4_1)))}$}
\label{fig:boxes}
\end{figure}

The final step in our overall computation is to compute the knot Floer complex of the knot in $S^3$ from the bordered invariants of its complement in $S^{3}$. This process involves the following main steps:

\begin{enumerate}[label={[\Roman*]}]
    \item \label{step1} Reverse the process described in Section A.4 of \cite{Lipshitz_2018} to convert the $\delta^{1}$ map into the $\partial$ map, and to eliminate generators in the idempotent $\iota_{1}$.
    \item \label{step2} Identify which, if any, diagonal differentials we might gain when switching from the ring $\mathbb{F}[\cU, \cV]/(\cU\cV)$ to the ring $\mathbb{F}[\cU, \cV]$; and of these, which can be eliminated via a change of basis.
    \item \label{step3} Use what we know about how knot invariants such as the genus and Alexander polynomial behave under satellite operations to pin down the absolute Alexander gradings and the relative Maslov gradings of our generators.
\end{enumerate}

To begin, the reader is directed to consult A.4 of \cite{Lipshitz_2018} to complete Step \ref{step1} of this process. The result of this procedure, which gives us $\CFKUV{(Q(4_1))}$, is shown in Figure \ref{fig:HFKmodUV}.

\begin{figure}
    \centering
    \begin{tikzpicture}

\draw[step=1cm,gray,very thin] (0.1,0.1) grid (3.9,3.9);

\draw [fill] (1,3) circle [radius=0.05];
\draw [fill] (2,3) circle [radius=0.05];
\draw [fill] (2,2) circle [radius=0.05];
\draw [fill] (3,2) circle [radius=0.05];
\draw [fill] (3,1) circle [radius=0.05];
\draw [fill] (1,1) circle [radius=0.05];
\draw [fill] (0.85, 0.85)   circle  [radius=0.05];

\node [above left]           at (1,3)    {$\cU^{2}e$};
\node [above right]           at (2,3)    {$\cU c$};
\node [above right]     at (2.1,2.1)    {$\cU\cV b$};
\node [above right]     at (3,2)    {$\cV d$};
\node [right]           at (3,1)    {$\cV^{2} f$};
\node [above left]     at (1,1)    {$\cU^{2}\cV^{2}g$};
\node [below left]      at (0.85,0.85)    {$\cU^{2}\cV^{2}a$};   

\draw[->, line width=1.1] (1.9,3) -- (1.1,3);
\draw[->, line width=1.1] (2,2.9) -- (2,2.1);
\draw[->, line width=1.1] (2.9,2) -- (2.1,2);
\draw[->, line width=1.1] (3,1.9) -- (3,1.1);
\draw[->, line width=1.1] (2.9,1) -- (1.1,1);
\draw[->, line width=1.1] (1,2.9) -- (1,1.1);

\draw[step=1cm,gray,very thin] (7.1,1.1) grid (9.9,3.9);

\draw [fill] (8,2) circle [radius=0.05];
\draw [fill] (9,2) circle [radius=0.05];
\draw [fill] (8,3) circle [radius=0.05];
\draw [fill] (9,3) circle [radius=0.05];

\node [below left]  at (8,2)    {$\cU \cV z_{i}$};
\node [below right]  at (9,2)    {$\cV y_{i}$};
\node [above left]  at (8,3)    {$\cU w_{i}$};
\node [above right]  at (9,3)    {$x_{i}$};

\draw[->, line width=1.1] (8.9,2) -- (8.1,2);
\draw[->, line width=1.1] (9,2.9) -- (9,2.1);
\draw[->, line width=1.1] (8.9,3) -- (8.1,3);
\draw[->, line width=1.1] (8,2.9) -- (8,2.1);

\end{tikzpicture}
    \caption{$\CFKUV{(Q(4_1))}$ where $\cR = \mathbb{F}[\cU, \cV]/(\cU\cV), i = 1$ through $6$}
    \label{fig:HFKmodUV}
\end{figure}

For Step \ref{step2}, we claim: the only diagonal differential which cannot be removed via some change of basis is the arrow corresponding to $\partial(b) = \cU\cV g$. Note that this arrow is required to ensure that $\partial^{2} =0$, and in particular that $\partial^{2}(c) = 0$ and $\partial^{2}(d) = 0$. So certainly, it must be present in the knot Floer complex. The complex at this stage is presented in Figure \ref{fig:HFKwDiag}.

\begin{figure}[h!]
    \centering
    \begin{tikzpicture}

\draw[step=1cm,gray,very thin] (0.1,0.1) grid (3.9,3.9);

\draw [fill] (1,3) circle [radius=0.05];
\draw [fill] (2,3) circle [radius=0.05];
\draw [fill] (2,2) circle [radius=0.05];
\draw [fill] (3,2) circle [radius=0.05];
\draw [fill] (3,1) circle [radius=0.05];
\draw [fill] (1,1) circle [radius=0.05];
\draw [fill] (0.85, 0.85)   circle  [radius=0.05];

\node [above left]           at (1,3)    {$\cU^{2}e$};
\node [above right]           at (2,3)    {$\cU c$};
\node [above right]     at (2.1,2.1)    {$\cU\cV b$};
\node [above right]     at (3,2)    {$\cV d$};
\node [right]           at (3,1)    {$\cV^{2} f$};
\node [above left]     at (1,1)    {$\cU^{2}\cV^{2}g$};
\node [below left]      at (0.85,0.85)    {$\cU^{2}\cV^{2}a$};   

\draw[->, line width=1.1] (1.9,3) -- (1.1,3);
\draw[->, line width=1.1] (2,2.9) -- (2,2.1);
\draw[->, line width=1.1] (2.9,2) -- (2.1,2);
\draw[->, line width=1.1] (3,1.9) -- (3,1.1);
\draw[->, line width=1.1] (2.9,1) -- (1.1,1);
\draw[->, line width=1.1] (1,2.9) -- (1,1.1);
\draw[->, line width=1.1] (1.9,1.9) -- (1.1,1.1);

\draw[step=1cm,gray,very thin] (7.1,1.1) grid (9.9,3.9);

\draw [fill] (8,2) circle [radius=0.05];
\draw [fill] (9,2) circle [radius=0.05];
\draw [fill] (8,3) circle [radius=0.05];
\draw [fill] (9,3) circle [radius=0.05];

\node [below left]  at (8,2)    {$\cU \cV z_{i}$};
\node [below right]  at (9,2)    {$\cV y_{i}$};
\node [above left]  at (8,3)    {$\cU w_{i}$};
\node [above right]  at (9,3)    {$x_{i}$};

\draw[->, line width=1.1] (8.9,2) -- (8.1,2);
\draw[->, line width=1.1] (9,2.9) -- (9,2.1);
\draw[->, line width=1.1] (8.9,3) -- (8.1,3);
\draw[->, line width=1.1] (8,2.9) -- (8,2.1);

\end{tikzpicture}
    \caption{$\CFKUV({Q(4_1}))$, where $\cR = \mathbb{F}[\cU, \cV]$}
    \label{fig:HFKwDiag}
\end{figure}

Before we consider any additional diagonal differentials, we note that any such arrows will be between two boxes or between a box and the L-shaped complex, so it will behoove us to nail down as much grading data as we can.

So, we jump to Step \ref{step3}, with the promise to circle back to finish up Step \ref{step2} afterwards. Step \ref{step3} involves pining down our complex in the $(i,j)$-plane to the extent that that is possible. We will make use of what we know about how certain knot invariants, namely, genus and the Alexander polynomial, behave under satelliting in general and the Mazur pattern in particular.  Recall that knot Floer homology detects genus, as described in Section \ref{sec:HF}. The satellite genus formula $$g(P(K)) = r\cdot g(K) + g(P \subset \mathbb{D}^{2} \times \mathbb{S}^{1})$$ where $r$ is the winding number of $P$, tells us that $g(Q(4_1)) = 2$. We can conclude from this that the Alexander gradings of our generators will be integers in the interval $[-2,2]$.

We note that as the generator of both vertical and horizontal homology in the complex, the generator $a$ has Alexander grading and Maslov grading $0$, and hence $$gr(a) = (gr_{\cU}(a),gr_{\cV}(a)) = (0,0)$$

\begin{figure}
    \centering
    \begin{tikzpicture}

\draw [stealth-stealth, line width=1.1](0,-3) -- (0,3);

\node[left] at (-.35,0) {\footnotesize $j=0$};
\node[left] at (-.35,1) {\footnotesize $j=1$};
\node[left] at (-.35,2) {\footnotesize $j=2$};
\node[left] at (-.35,-1) {\footnotesize $j=-1$};
\node[left] at (-.35,-2) {\footnotesize $j=-2$};

\draw [fill] (0,-1) circle [radius=0.08];
\draw [fill] (0,-2) circle [radius=0.08];
\draw [fill] (0,1) circle [radius=0.08];
\draw [fill] (0,2) circle [radius=0.08];
\draw [fill] (0,0) circle [radius=0.08];
\draw [fill] (-.2,0) circle [radius=0.08];
\draw [fill] (0.2,0) circle [radius=0.08];

\node [right]           at (0.2,-1)    {\large $d$};
\node [right]           at (0.2,-2)    {\large $f$};
\node [right]           at (0.2,1)    {\large $c$};
\node [right]           at (0.2,2)    {\large $e$};
\node [right]           at (0.2,2)    {\large $e$};
\node [right]           at (0.2,0)    {\large $a,g,b$};


\end{tikzpicture}
    \caption{Some generators of $\CFKh{(Q(4_1))}$}
    \label{fig:gennomaslov}
\end{figure}

Next, $g(Q(4_1)) = 2$ tells us that our L-shaped complex must be situated in the $(i,j)$ plane precisely as shown in Figure \ref{fig:gennomaslov}. Note that, for Figures \ref{fig:gennomaslov} - \ref{fig:gen2casetwomaslov}, whenever a series of generators share a horizontal axis as in the case of $a,g,b$, these generators are still meant to be sitting along the axis $i=0$ but are staggered for visualization purposes.   

Now, it will be helpful to recall how knot Floer homology categorifies our Alexander polynomial, namely

$$t - 3 + \frac{\displaystyle 1}{\displaystyle t} = \Delta_{K}(t) = \sum_{M, s} (-1)^{M} \hspace{.1cm} t^{s} \hspace{.1cm} \text{rank}\left(\HFKh_{M}(K,s)\right)$$

\noindent where $M$ denotes the Maslov grading and $s$ the Alexander grading, and where we have used the fact that $\Delta_{Q(K)} = \Delta_{K}$ for $Q$ the untwisted Mazur pattern. Now, we note that we have 31 generators of our knot Floer complex; so, we must have that 26 of them appear in pairs $(\alpha, \beta)$ where $s(\alpha) = s(\beta)$ and $M(\alpha) \equiv M(\beta) + 1$ (mod $2$), and then the remaining 5 are arranged in the following bigradings: one generator in ($s=1$, $M \equiv m_1$ (mod 2)), three generators in ($s=0$,  $M \equiv m_{1} + 1$ (mod 2)), and one generator in bigrading ($s=-1$, $M \equiv m_1$ (mod 2)). Then, since the generators $e$ and $f$ are in Alexander gradings $s=2$ and $s=-2$ respectively, they must be canceled in the $\Delta_{K}$ computation.

Recalling the conventions for naming the generators of the square acyclic complexes as given in Figure \ref{fig:gen1maslov}, to cancel $e$ in the $\Delta_{K}$ computation we must have a square acyclic complex with generators:

\begin{itemize}
\item[*] $w_{i}$ in bigrading ($s = 2$, $M \equiv M(e) + 1 \hspace{.15cm} \text{mod} \hspace{.15cm} 2)$
\item[*] $x_{i}$ in bigrading ($s=1$, $M \equiv M(e) \hspace{.15cm} \text{mod} \hspace{.15cm} 2)$
\item[*] $y_{i}$ in bigrading ($s=0, M \equiv M(e) + 1 \hspace{.15cm} \text{mod} \hspace{.15cm} 2)$
\item[*] $z_{i}$ in bigrading ($s=1$, $M \equiv M(e) \hspace{.15cm} \text{mod} \hspace{.15cm} 2)$ 
\end{itemize}

\noindent where we've use the fact that $\partial$ lowers the Maslov grading by 1 to determine the Maslov gradings of the generators relative to one another. We will call this square complex Box 1. Similarly, to cancel $f$ in the $\Delta_{K}$ computation we must have a square acyclic complex with generators:
\begin{itemize}
\item[*] $w_{i}$ in bigrading ($s = 0$, $M \equiv M(f) + 1 \hspace{.15cm} \text{mod} \hspace{.15cm} 2)$
\item[*] $x_{i}$ in bigrading ($s=-1$, $M \equiv M(f) \hspace{.15cm} \text{mod} \hspace{.15cm} 2)$
\item[*] $y_{i}$ in bigrading ($s=-2, M \equiv M(f) + 1 \hspace{.15cm} \text{mod} \hspace{.15cm} 2)$
\item[*] $z_{i}$ in bigrading ($s=1$, $M \equiv M(f) \hspace{.15cm} \text{mod} \hspace{.15cm} 2)$ 
\end{itemize}
\noindent We denote this Box 2. From here, it is not difficult to assign relative, modulo 2 Maslov gradings for all generators mentioned thus far. We show in Figure \ref{fig:gen1maslov} the resultant complex thus far; recall that usually, Maslov gradings are suppressed from such a diagram, but we encode the relative mod 2 gradings via the following convention: a generator in relative Maslov grading $m \equiv 1$ (mod $2$) is given by an open circle and a generator in relative Maslov grading $m \equiv 0$ (mod $2$) is given by a closed circle. As in Figure \ref{fig:gennomaslov} the labels of the generators are to be read respectively to the order of the generators. For example, in Figure \ref{fig:gen1maslov}, $e$ has Alexander grading $2$ and relative Maslov grading $M \equiv 1$ mod 2, and $w_{4}$ has Alexander grading $2$ and relative Maslov grading $M \equiv 0$ mod 2.

There remain four square acyclic complexes to be placed in the plane; but note that as we have recovered the Alexander polynomial completely, these will be arranged in pairs so as to ensure all generators cancel in the computation of $\Delta_{K}$. There are two cases which permit this to happen, as shown in Figures \ref{fig:gen2caseonemaslov} and \ref{fig:gen2casetwomaslov}.

\begin{figure}
    \centering
    \begin{tikzpicture}

\draw [stealth-stealth, line width=1.1](0,-3) -- (0,3);

\node[left] at (-.5,0) {\footnotesize $j=0$};
\node[left] at (-.5,1) {\footnotesize $j=1$};
\node[left] at (-.5,2) {\footnotesize $j=2$};
\node[left] at (-.5,-1) {\footnotesize $j=-1$};
\node[left] at (-.5,-2) {\footnotesize $j=-2$};

\draw [fill=white] (-.15,2) circle [radius=0.08];
\draw [fill] (.15,2) circle [radius=0.08];

\draw [fill] (-0.2,1) circle [radius=0.08];
\draw [fill=white] (0,1) circle [radius=0.08];
\draw [fill=white] (0.2,1) circle [radius=0.08];

\draw [fill] (-.4,0) circle [radius=0.08];
\draw [fill] (-.2,0) circle [radius=0.08];
\draw [fill=white] (0,0) circle [radius=0.08];
\draw [fill] (0.2,0) circle [radius=0.08];
\draw [fill] (.4,0) circle [radius=0.08];

\draw [fill] (-0.2,-1) circle [radius=0.08];
\draw [fill=white] (0,-1) circle [radius=0.08];
\draw [fill=white] (0.2,-1) circle [radius=0.08];

\draw [fill=white] (-.15,-2) circle [radius=0.08];
\draw [fill] (.15,-2) circle [radius=0.08];

\node [right]           at (0.5,-1)    {\large $d, x_{2}, z_{2}$};
\node [right]           at (0.5,-2)    {\large $f, y_{2}$};
\node [right]           at (0.5,1)    {\large $c, x_{1}, z_{1}$};
\node [right]           at (0.5,2)    {\large $e$, $w_{1}$};
\node [right]           at (0.5,0)    {\large $a,g,b, y_{1}, w_{2}$};


\end{tikzpicture}
    \caption{Some generators of $\CFKh{(Q(4_1))}$ with relative Maslov grading data}
    \label{fig:gen1maslov}
\end{figure}
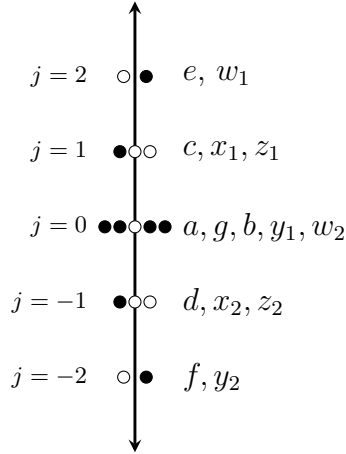

\begin{figure}
    \centering
    \begin{tikzpicture}

\draw [stealth-stealth, line width=1.1](0,-3) -- (0,3);

\node[left] at (-.6,0) {\footnotesize $j=0$};
\node[left] at (-.6,1) {\footnotesize $j=1$};
\node[left] at (-.6,2) {\footnotesize $j=2$};
\node[left] at (-.6,-1) {\footnotesize $j=-1$};
\node[left] at (-.6,-2) {\footnotesize $j=-2$};

\draw [fill] (-.15,2) circle [radius=0.08];
\draw [fill=white] (.15,2) circle [radius=0.08];

\draw [fill=white] (-0.34,1) circle [radius=0.08];
\draw [fill=white] (-.14,1) circle [radius=0.08];
\draw [fill] (0.14,1) circle [radius=0.08];
\draw [fill] (0.34,1) circle [radius=0.08];

\draw [fill] (-0.34,0) circle [radius=0.08];
\draw [fill] (-.14,0) circle [radius=0.08];
\draw [fill=white] (0.14,0) circle [radius=0.08];
\draw [fill=white] (0.34,0) circle [radius=0.08];

\draw [fill=white] (-0.34,-1) circle [radius=0.08];
\draw [fill=white] (-.14,-1) circle [radius=0.08];
\draw [fill] (0.14,-1) circle [radius=0.08];
\draw [fill] (0.34,-1) circle [radius=0.08];

\draw [fill] (-.15,-2) circle [radius=0.08];
\draw [fill=white] (.15,-2) circle [radius=0.08];

\node [right]           at (0.5,2)    {\large $w_{4}, w_{3}$};
\node [right]           at (0.5,1)    {\large $x_{4}, z_{4}, x_{3}, z_{3}$};
\node [right]           at (0.5,0)    {\large $y_{4}, w_{6}, y_{3}, w_{5}$};
\node [right]           at (0.5,-1)    {\large $x_{6}, z_{6}, x_{5}, z_{5}$};
\node [right]           at (0.5,-2)    {\large $y_{6}, y_{5}$};


\end{tikzpicture}
    \caption{Case One of Boxes 3,4,5,6 with rel. Maslov grading data}
    \label{fig:gen2caseonemaslov}
\end{figure}

\begin{figure}
    \centering
    \begin{tikzpicture}

\draw [stealth-stealth, line width=1.1](0,-3) -- (0,3);

\node[left] at (-1.1,0) {\footnotesize $j=0$};
\node[left] at (-1.1,1) {\footnotesize $j=1$};
\node[left] at (-1.1,2) {\footnotesize $j=2$};
\node[left] at (-1.1,-1) {\footnotesize $j=-1$};
\node[left] at (-1.1,-2) {\footnotesize $j=-2$};

\draw [fill] (-0.8,1) circle [radius=0.08];
\draw [fill] (-.3,1) circle [radius=0.08];
\draw [fill=white] (0.3,1) circle [radius=0.08];
\draw [fill=white] (0.8,1) circle [radius=0.08];

\draw [fill=white] (-.9,0) circle [radius=0.08];
\draw [fill=white] (-.7,0) circle [radius=0.08];
\draw [fill=white] (-.4,0) circle [radius=0.08];
\draw [fill=white] (-.2,0) circle [radius=0.08];
\draw [fill] (0.2,0) circle [radius=0.08];
\draw [fill] (0.4,0) circle [radius=0.08];
\draw [fill] (0.7,0) circle [radius=0.08];
\draw [fill] (0.9,0) circle [radius=0.08];

\draw [fill] (-0.8,-1) circle [radius=0.08];
\draw [fill] (-.3,-1) circle [radius=0.08];
\draw [fill=white] (0.3,-1) circle [radius=0.08];
\draw [fill=white] (0.8,-1) circle [radius=0.08];

\node [right]           at (1.1,1)    {\large $w_{4}, w_{6}, w_{3}, w_{5}$};
\node [right]           at (1.1,0)    {\large $y_{4}, z_{4}, y_{6}, z_{6}, y_{3}, z_{3}, y_{5}, z_{5}$};
\node [right]           at (1.1,-1)    {\large $z_{4}, z_{6}, z_{3}, z_{5}$};


\end{tikzpicture}
    \caption{Case Two of Boxes 3,4,5,6 with rel. Maslov grading data}
    \label{fig:gen2casetwomaslov}
\end{figure}
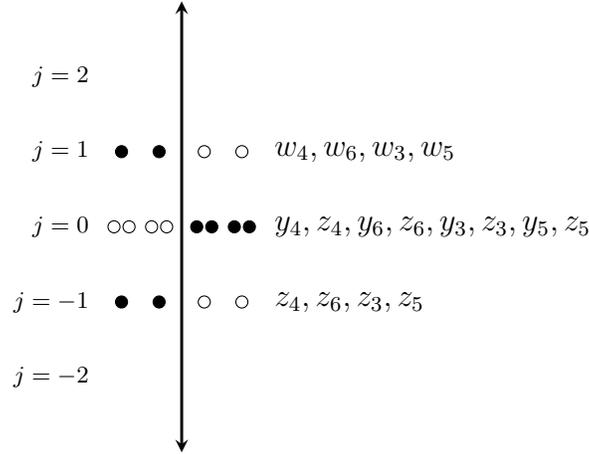

From these computations, we have enough information to determine the Alexander gradings exactly of our generators, and the mod 2 Maslov gradings. In particular, we will be able to compute $(gr_{\cU}, gr_{\cV})$ mod 2, and to compute $gr_{\cU} - gr_{\cV} = 2s$ on the nose, for every generator. This grading information will suffice for the involutive knot Floer computations in the subsequent section \ref{sec:4invol}. Note that the gradings for boxes Box 1 and Box 2 are the same in both Cases One and Two, but it will be useful for organizational purposes to repeat this data. Table \ref{grtableL} contains data common to both Cases, and Tables \ref{grtableOne} and \ref{grtableTwo} contain the grading data for Cases One and Two respectively.

\begin{table}[ht]
    \centering
    \begin{tabular}{|c|c|c|}
        \hline
        generator& $(gr_{\cU}, gr_{\cV})$ mod $2$ & $gr_{\cU} - gr_{\cV}$\\
       \hline
        a & (0,0) &     0       \\
        b & (1,1) &     0       \\
        c & (0,0) &     2       \\
        d & (0,0) &     -2      \\
        e & (1,1) &     4       \\
        f & (1,1) &     -4      \\
        g & (0,0) &     0       \\
         \hline
    \end{tabular}
   \caption{Relative gradings for the L-shaped complex}
   \label{grtableL}
\end{table}

\begin{table}[ht]
    \centering
    \begin{tabular}{|c|c|c|}
        \hline
         generator& $(gr_{\cU}, gr_{\cV})$ mod $2$ & $gr_{\cU} - gr_{\cV}$\\
         \hline
         $w_{1,4}$  &   (0,0)     &     4   \\
         $x_{1,4}$  &   (1,1)     &     2   \\
         $y_{1,4}$  &   (0,0)     &     0   \\
         $z_{1,4}$  &   (1,1)     &     2   \\
         \hline
         $w_{2,6}$  &   (0,0)     &     0    \\
         $x_{2,6}$  &   (1,1)     &     -2   \\
         $y_{2,6}$  &   (0,0)     &     -4   \\
         $z_{2,6}$  &   (1,1)     &     -2   \\
         \hline
         $w_3$      &   (1,1)   &        4      \\
         $x_3$      &   (0,0)   &        2     \\
         $y_3$      &   (1,1)   &        0      \\
         $z_3$      &   (0,0)   &        2      \\
         \hline
         $w_5$      &   (1,1)   &        0      \\
         $x_5$      &   (0,0)   &        -2     \\
         $y_5$      &   (1,1)   &        -4     \\
         $z_5$      &   (0,0)   &        -2      \\
        
         \hline
    \end{tabular}
    \caption{Relative gradings for all boxes in Case One}
    \label{grtableOne}
\end{table}

\begin{table}[ht]
    \centering
    \begin{tabular}{|c|c|c|}
        \hline
         generator& $(gr_{\cU}, gr_{\cV})$ mod $2$ & $gr_{\cU} - gr_{\cV}$\\
         \hline
         $w_1$      &   (0,0)   &        4      \\
         $x_1$      &   (1,1)   &        2     \\
         $y_1$      &   (0,0)   &        0      \\
         $z_1$      &   (1,1)   &        2      \\
         \hline
         $w_2$      &   (0,0)   &        0      \\
         $x_2$      &   (1,1)   &        -2     \\
         $y_2$      &   (0,0)   &        -4     \\
         $z_2$      &   (1,1)   &        -2      \\
         \hline
         $w_{3,5}$  &   (1,1)     &     2   \\
         $x_{3,5}$  &   (0,0)     &     0   \\
         $y_{3,5}$  &   (1,1)     &     -2   \\
         $z_{3,5}$  &   (0,0)     &     0    \\
         \hline
         $w_{4,6}$  &   (0,0)     &     2    \\
         $x_{4,6}$  &   (1,1)     &     0   \\
         $y_{4,6}$  &   (0,0)     &     -2   \\
         $z_{4,6}$  &   (1,1)     &     0   \\
        
         \hline
    \end{tabular}
    \caption{Relative gradings for all boxes in Case Two}
    \label{grtableTwo}
\end{table}

\newpage
With this data in hand, we can now consider whether any other diagonal differential(s) must be present in our complex, which cannot be eliminated via change of basis. If such differentials exist, they must preserve $\partial^{2} = 0$ and must map between generators whose Maslov gradings are opposite parity, since the differential on our complex drops this grading by 1. These requirements narrow down the possibilities for diagonal arrows to the following 5 general cases, and we will split them into 2 categories here:

\begin{enumerate}
    \item In Case A, some series of diagonal arrows exist between a box in the set {Box 1, Box 2, Box 4, Box 6} and a box in the set {Box 3, Box 5}. For convention, let us say the diagonals in this case go from some Box $\alpha$ to some Box $\beta$, and that in the $(i,j)$-plane the arrows have length $(\cU\cV)^{m}$. We note that the diagonals might have different relative lengths- in particular, in Case Two, where there are boxes along different lines of slope 1 in the $(i,j)$-plane; but the method for dealing with them is morally the same, up to perhaps additional powers of $\cV$ or $\cU$. Such cases are left to the reader to consider more closely, should they so choose. In Case A, we may have the following differential map:
     \begin{align*}
         \begin{split}
             \partial (w_{\alpha}) = & \hspace{.15cm} \cV z_{\alpha} + (\cU\cV)^{m} w_{\beta}\\
             \partial (x_{\alpha}) = & \hspace{.15cm} \cU w_{\alpha} + \cV y_{\alpha} + (\cU\cV)^{m} x_{\beta} \\
             \partial (y_{\alpha}) = & \hspace{.15cm} \cU z_{\alpha} + (\cU\cV)^{m} y_{\beta} \\
             \partial (z_{\alpha}) = & \hspace{.15cm} (\cU\cV)^{m} z_{\beta} \\
             \partial (w_{\beta}) = & \hspace{.15cm} \cV z_{\beta} \\
             \partial (x_{\beta}) = & \hspace{.15cm} \cU w_{\beta} + \cV y_{\beta} \\
             \partial (y_{\beta}) = & \hspace{.15cm}\cU z_{\beta} \\
             \partial (z_{\beta}) = & \hspace{.15cm} 0 \\
         \end{split}
     \end{align*}
     \noindent Then, to eliminate the edges between the two boxes, we perform the following changes of basis: 
         \begin{align*}
         \begin{split}
             w_{\alpha} \mapsto w_{\alpha}^{'} = & \hspace{.15cm} w_{\alpha} + \cV^{m}\cU^{m-1} x_{\beta}  \\
             z_{\alpha} \mapsto  z_{\alpha}^{'} = & \hspace{.15cm} z_{\alpha} + \cV^{m}\cU^{m-1} y_{\beta}  \\
         \end{split}
     \end{align*}
    \item For Cases B, C, D, and E, we will consider diagonal arrows that could appear between the main L-shaped complex and some box in the set {Box 1, Box 2, Box 4, Box 6}. Again, we will address mostly Case 1, with the understanding that Case 2 is identical up to a plus or minus power of $\cU$ and/or $\cV$. These 4 cases can be thought of as special cases of Case A, in that we can think of the main L complex as containing two ``sub-boxes", as outlined in the figure directly below. 
\begin{center}
    
\begin{tikzpicture}
    
\draw [fill] (1,3) circle [radius=0.05];
\draw [fill] (2,3) circle [radius=0.05];
\draw [fill] (2,2) circle [radius=0.05];
\draw [fill] (1,1) circle [radius=0.05];

\draw [fill] (2.8,1.5) circle [radius=0.05];
\draw [fill] (1.8,0.5) circle [radius=0.05];
\draw [fill] (3.8,1.5) circle [radius=0.05];
\draw [fill] (3.8,0.5) circle [radius=0.05];

\node [above left]      at (1,3)    {$\cU^{2}e$};
\node [above right]     at (2,3)    {$\cU c$};
\node [above right]     at (2.1,2.1) {$\cU\cV b$};
\node [above left]     at (1,1)    {$\cU^{2}\cV^{2}g$};

\node [above right]     at (3.8,1.5)    {$\cV d$};
\node [right]           at (3.8,0.5)    {$\cV^{2} f$};
\node [above]           at (2.8,1.5)    {$\cU\cV b$};
\node [below left]     at (1.8,0.5)    {$\cU^{2}\cV^{2}g$};

\draw[->, line width=1.1] (1.9,3) -- (1.1,3);
\draw[->, line width=1.1] (2,2.9) -- (2,2.1);
\draw[->, line width=1.1] (1,2.9) -- (1,1.1);
\draw[->, line width=1.1] (1.9,1.9) -- (1.1,1.1);

\draw[->, line width=1.1] (3.7,1.5) -- (2.9,1.5);
\draw[->, line width=1.1] (3.8,1.4) -- (3.8,0.6);
\draw[->, line width=1.1] (3.7,0.5) -- (1.9,0.5);
\draw[->, line width=1.1] (2.7,1.4) -- (1.9,0.6);

\end{tikzpicture}
\end{center}
\vspace{.5cm}

Then, the four cases are thus: 
 
 \begin{enumerate}[label={[\Alph*]}]
  \setcounter{enumii}{1}
  \item The diagonals go from some Box $\alpha$ in the set \{Box 1, Box 2, Box 4, Box 6\} to the subcomplex between the generators \{e, c, b, g\}. In such a case, we would have the following differential map:
  \begin{align*}
         \begin{split}
            \partial(w_{\alpha}) = & \hspace{.15cm} \cV z_{\alpha} + (\cU\cV)^{m}e \\
            \partial(x_{\alpha}) = & \hspace{.15cm} \cU w_{\alpha} + \cV y_{\alpha} + (\cU\cV)^{m}c \\
            \partial(y_{\alpha}) = & \hspace{.15cm} \cU z_{\alpha} + (\cU\cV)^{m}b \\
            \partial(z_{\alpha}) = & \hspace{.15cm} \cU^{m}\cV^{m+1}g \\
            \partial(e) = & \hspace{.15cm}\cV^{2}g \\
            \partial(c) = & \hspace{.15cm} \cU e + \cV b \\
            \partial(b) = & \hspace{.15cm} \cU\cV g \\
            \partial(g) = & \hspace{.15cm} 0 \\
         \end{split}
\end{align*}
     \noindent Then, to eliminate the edges between the two boxes, we perform the following changes of basis: 
         \begin{align*}
         \begin{split} 
        w_{\alpha} \mapsto  w_{\alpha}^{'} = & \hspace{.15cm} w_{\alpha} + \cU^{m}\cV^{m-1}c \\
        z_{\alpha} \mapsto z_{\alpha}^{'} = & \hspace{.15cm} z_{\alpha} + \cU^{m-1}\cV^{m}b\\
         \end{split}
     \end{align*}
     
  \item The diagonals go from the subcomplex between the generators \{e, c, b, g\} to some Box $\beta$ in the set \{Box 1, Box 2, Box 4, Box 6\}.
  
  \begin{align*}
         \begin{split}
            \partial(e) = & \hspace{.15cm}\cV^{2}g + (\cU\cV)^{m} w_{\beta} \\
            \partial(c) = & \hspace{.15cm} \cU e + \cV b + (\cU\cV)^{m} x_{\beta} \\
            \partial(b) = & \hspace{.15cm} \cU\cV g + (\cU\cV)^{m} y_{\beta} \\
            \partial(g) = & \hspace{.15cm} \cU^{m} \cV^{m-1} z_{\beta} \\
            \partial(w_{\beta}) = & \hspace{.15cm} \cV z_{\beta} \\
            \partial(x_{\beta}) = & \hspace{.15cm} \cU w_{\beta} + \cV y_{\beta} \\
            \partial(y_{\beta}) = & \hspace{.15cm} \cU z_{\beta} \\
            \partial(z_{\beta}) = & \hspace{.15cm} 0 \\
         \end{split}
\end{align*}
     \noindent Then, to eliminate the edges between the two boxes, we perform the following changes of basis: 
         \begin{align*}
         \begin{split} 
         e \mapsto e^{'} = & \hspace{.15cm} e + \cU^{m}\cV^{m-1} x_{\beta} \\
         g \mapsto g^{'} = & \hspace{.15cm} g + \cV^{m-1}\cU^{m-1}y_{\beta} \\
         \end{split}
     \end{align*}
  
  \item The diagonals go from some Box $\alpha$ in the set \{Box 1, Box 2, Box 4, Box 6\} to the subcomplex between the generators {b, d, f, g}.

  \begin{align*}
         \begin{split}
            \partial(w_{\alpha}) = & \hspace{.15cm} \cV z_{\alpha} + (\cU\cV)^{m}b \\
            \partial(x_{\alpha}) = & \hspace{.15cm} \cU w_{\alpha} + \cV y_{\alpha} + (\cU\cV)^{m}d \\
            \partial(y_{\alpha}) = & \hspace{.15cm} \cU z_{\alpha} + (\cU\cV)^{m}f \\
            \partial(z_{\alpha}) = & \hspace{.15cm} \cU^{m+1}\cV^{m}g \\
            \partial(b) = & \hspace{.15cm} \cU\cV g \\
            \partial(d) = & \hspace{.15cm} \cU b + \cV f\\
            \partial(f) = & \hspace{.15cm} \cU^{2}g \\
            \partial(g) = & \hspace{.15cm} 0 \\
         \end{split}
\end{align*}
     \noindent Then, to eliminate the edges between the two boxes, we perform the following changes of basis: 
    \begin{align*}
         \begin{split} 
         w_{\alpha} \mapsto w_{\alpha}^{'} = & \hspace{.15cm} w_{\alpha} + \cU^{m}\cV^{m-1}d \\
         z_{\alpha} \mapsto z_{\alpha}^{'} = & \hspace{.15cm} z_{\alpha} + \cU^{m-1}\cV^{m}f \\ 
         \end{split}
     \end{align*}
  
  \item The diagonals go from the subcomplex between the generators \{b, d, f, g\} to some Box $\beta$ in the set \{Box 1, Box 2, Box 4, Box 6\}.

  \begin{align*}
         \begin{split}
            \partial(b) = & \hspace{.15cm} \cU\cV g + (\cU\cV)^{m} w_{\beta}\\
            \partial(d) = & \hspace{.15cm} \cU b + \cV f +(\cU\cV)^{m} x_{\beta} \\
            \partial(f) = & \hspace{.15cm} \cU^{2}g + (\cU\cV)^{m} y_{\beta} \\
            \partial(g) = & \hspace{.15cm} (\cU\cV)^{m} z_{\beta} \\
            \partial(w_{\beta}) = & \hspace{.15cm} \cV z_{\beta} \\
            \partial(x_{\beta}) = & \hspace{.15cm} \cU w_{\beta} + \cV y_{\beta} \\
            \partial(y_{\beta}) = & \hspace{.15cm} \cU z_{\beta} \\
            \partial(z_{\beta}) = & \hspace{.15cm} 0 \\
         \end{split}
\end{align*}
     \noindent Then, to eliminate the edges between the two boxes, we perform the following changes of basis: 
         \begin{align*}
         \begin{split} 
         b \mapsto b^{'} = & \hspace{.15cm} b + \cU^{m}\cV^{m-1} x_{\beta} \\
         g \mapsto g^{'} = & \hspace{.15cm} g + \cU^{m-2}\cV^{m} y_{\beta} \\
         \end{split}
     \end{align*}
\end{enumerate}
\noindent Thus, the only diagonal arrow which must exist in the knot Floer complex is $\partial(b) = \cU\cV g$. We have then organized our basis elements sufficiently to proceed to considering the involutive knot Floer complex.
\end{enumerate}
\vspace{0.5cm}

\section{Involutive Knot Floer homology}
\label{sec:4invol}

\subsection{Involutive knot Floer homology}
The involutive knot Floer homology package of Hendricks and Manolescu makes use of a conjugation action on the knot Floer complex corresponding to reversing the orientation of the Heegaard surface, and switching the $\alpha$ and $\beta$ curves \cite{HendricksManolescu}. Let $C= \CFKUV{(K)}$ denote the knot Floer complex of $K$ over the ring $\cR = \mathbb{F}[\cU, \cV]$, with differential $\partial$. We equip our complex with a basis $\mathcal{B}$. We will make use of the notions of skew-homotopies; we will use the symbol $\simeqd$ and this will denote a homotopy that is $\mathcal{R}$-skew-equivariant and thus swaps the $\cU$ and $\cV$ actions. We introduce some formal derivatives of the differential $\partial$.

\begin{definition} Let $\eta \in \mathcal{B}$ and let $\cU^{a} \cV^{b}\eta \in C$. Then, we define:
\begin{enumerate}
    \item The \textit{$\cU$ derivative} of the element $\cU^{a} \cV^{b} \eta$ is its partial derivative with respect to the variable $\cU$, given by $D_{\cU}(\cU^{a} \cV^{b} \eta) = a \hspace{.1cm} \cU^{a-1} \cV^{b} \eta$.
    \vskip.1in
    \item The \textit{$\cV$ derivative} of the element $\cU^{a} \cV^{b} \eta$ is its partial derivative with respect to the variable $\cV$, given by $D_{\cV}(\cU^{a} \cV^{b} \eta) = b \hspace{.1cm} \cU^{a} \cV^{b-1} \eta$.
    \vskip.1in
    \item The chain map defined by the commutator $\Phi = [\partial, D_{\cU}]$ is an $\cR$ equivariant chain map. It is an endomorphism on $\CFKUV$ which raises $gr_{\cU}$ by 1 and lowers $gr_{\cV}$ by 1.
    \vskip.1in
    \item The chain map defined by the commutator $\Psi = [\partial, D_{\cV}]$ is an $\cR$ equivariant chain map. It is an endomorphism on $\CFKUV$ which lowers $gr_{\cU}$ by 1 and raises $gr_{\cV}$ by 1.
\end{enumerate}

\end{definition}

The maps $\Phi$ and $\Psi$ can be thought of as formal derivatives of the boundary map with respect to $\cU$ and $\cV$, respectively. We note that the map studied by Sarkar in \cite{Sarkar_2015} is homotopy equivalent to $Id + \Phi \circ \Psi$; we will thus refer to this as the Sarkar map, for brevity. We organize information about the image of our generators under the differential and under the Sarkar map in Tables \ref{tablesarkarL} and \ref{tablesarkarsquare}.

\begin{table}[ht]
    \centering
    \begin{tabular}{|c|c|c|c|}
        \hline
        generator & $\partial$  &   $Id+\Phi \circ \Psi$ \\
        \hline
        a   &       0       &   $a$    \\
        b   &   $\cU \cV g$   &   $b$    \\
        c   &  $\cU e+\cV b$&   $c+2\cV g = c$ \\
        d   &  $\cU b+\cV f$&   $d+\cU g$   \\
        e   &  $\cV^{2} g$   &   $e$          \\
        f   &  $\cU^{2} g$   &   $f$     \\
        g   &       0       &   $g$       \\
        \hline
    \end{tabular}
    \caption{Derivative and grading data for the L-shaped subcomplex}
    \label{tablesarkarL}
\end{table}

\begin{table}[ht]
    \centering
    \begin{tabular}{|c|c|c|c|c|}
        \hline
        generator & $\partial$  &   $Id+\Phi \circ \Psi$ \\
        \hline
        $w_i$ & $\cV z_i$ & $w_i$ \\
        $x_i$ & $\cU w_i + \cV y_i$ & $x_i + z_i$ \\
        $y_i$ & $\cU z_i$ & $y_i$ \\
        $z_i$ & 0 & $z_i$ \\
        \hline
    \end{tabular}
    \caption{Derivative data for the square subcomplexes}
    \label{tablesarkarsquare}
\end{table}

Involutive knot Floer homology assigns to the knot Floer complex $\CFKUV{(K)}$ with basis $\mathcal{B}$ and boundary map $\partial$ a special map corresponding to the aforementioned conjugation action. The specific properties this map exhibits are described in the following definition:

\begin{definition} \cite{HendricksManolescu}
Let $C$ be a chain complex with basis $\mathcal{B}$ and differential $\partial$, that is freely and finitely generated over $\mathcal{R}$. Let $\iota$ be an endomorphism on $C$. Then, $(C, \mathcal{B}, \partial, \iota)$ is called an {\em $\iota_K$-complex} if it satisfies the following properties:
\begin{enumerate}
    \item There is an isomorphism between $H_{*}(C, \partial)$ and $\mathcal{R}$ which preserves gradings, such that $1 \in \mathcal{R}$ is in grading $gr(1) = (gr_{\cU}(1), \gr_{\cV}(1)) = (0,0).$
    \item The map $\iota$ is $\mathcal{R}$-skew-equivariant, skew-graded, and skew-filtered.
    \item The map $\iota$ is not an involution precisely, but rather squares to something homotopy equivalent to the Sarkar map; $\iota^{2} \simeq Id + \Phi \circ \Psi$
\end{enumerate}

\end{definition}

\noindent Then, the conjugation action on the complex is denoted by $\iota_K$ and we refer to the package of $(\CFKUV({K}),\iota_K)$ as the $\iota_K$-complex of the knot $K$.

We will now reference ideas and notation found in \cite{Zemke_2019} and \cite{HKPS_2020} to focus our consideration of $\iota_K$-complexes. Recall that the chain homotopy type of $(\CFKUV({K}), \iota_K)$ is an invariant of the knot $K$. Let $\simeqd$ denote a skew-homotopy equivalence; that is, a $\cR$ skew-equivariant, skew-filtered chain homotopy equivalence. The symbol $\simeq$ will, as usual, denote a  $\cR$-equivariant, filtered chain homotopy equivalence. In \cite{Zemke_2019}, Zemke introduces the following definition:

\begin{definition} [\cite{Zemke_2019}, Definition 2.4] We define two $\iota_K$ complexes $(C_1, \iota_1)$ and $(C_2, \iota_2)$ to be {\em locally equivalent} if there exists chain maps 

$$F: C_1 \rightarrow C_2 \hspace{1cm} \text{and} \hspace{1cm}  G: C_2 \rightarrow C_1$$

\noindent which are filtered, bigraded, and $\mathbb{F}[\cU, \cV]$-equivariant, such that

$$ F \circ \iota_1 \simeqd \iota_2 \circ F \hspace{1cm} \text{and} \hspace{1cm} G \circ \iota_2 \simeqd \iota_2 \circ G$$

\noindent and $F$ and $G$ are isomorphisms on $H_{*}((\cU, \cV)^{-1}C_1)$ and $H_{*}((\cU, \cV)^{-1}C_2)$ respectively.

\end{definition}

We will use a modification of this notion, provided by Hom-Kang-Park-Stoffregen:

\begin{definition}[\cite{HKPS_2020}, Definition 2.7]  We define two $\iota_K$ complexes $(C_1, \iota_1)$ and $(C_2, \iota_2)$ to be {\em almost locally equivalent} if there exists chain maps

$$F: C_1 \rightarrow C_2 \hspace{1cm} \text{and} \hspace{1cm}  G: C_2 \rightarrow C_1$$

\noindent which are filtered, bigraded, and $\mathbb{F}[\cU, \cV]$-equivariant, such that

$$ F \circ \iota_1 \simeqd \iota_2 \circ F \muv \hspace{1cm}  \text{and} \hspace{1cm} G \circ \iota_2 \simeqd \iota_2 \circ G \muv $$

\noindent and $F$ and $G$ are isomorphisms on $H_{*}((\cU, \cV)^{-1}C_1)$ and $H_{*}((\cU, \cV)^{-1}C_2)$ respectively.

\end{definition}

Our goal in the remainder of this section is to provide a partial computation of the $\iota_K$-complex of $Q(4_1)$ up to almost local equivalence. Our partial computation comes in the form of a weaker notion of equivalence for $\iota_{K}$-complexes:

\begin{definition}[Defn 2.6, \cite{HKPS_2020}] 
Let $C$ be a chain complex with basis $\mathcal{B}$ and differential $\partial$, that is freely and finitely generated over $\mathcal{R}$. Then, $(C, \mathcal{B}, \partial, \iota)$ is called an {\em almost $\iota_K$-complex} if it satisfies the following properties:
\begin{enumerate}
    \item There is an isomorphism between $H_{*}(C/(\cU, \cV), \partial)$ and $\mathcal{R}/(\cU, \cV)$
    \item The map $\iota$ is a skew-graded, $\mathbb{F}$-linear endomorphism on $C/(\cU, \cV)$ 
    \item $\iota^{2} = Id + \Phi \circ \Psi \muv.$
    
\end{enumerate}

\end{definition}

Thus, we will be computing the almost $\iota_K$-complex associated to $\CFKUV{Q(4_1)}$ up to almost local equivalence. Then, we will use this data to show that no nontrivial linear combination of the almost $\iota_{K}$-complex of $Q(4_{1})$ can be locally equivalent to the almost $\iota_{K}$-complex of the uknot. By Lemma 2.9 of \cite{HKPS_2020}, we know that concordant knots must have almost locally equivalent almost $\iota_K$-complexes, and thus this result implies our knot cannot have finite order in $\cC$.

We note that our proof method is inspired by and closely follows a proof method in the paper of Hom-Kang-Park-Stoffregen. We further note that our generators labeled $\{a, b, c, d, e, f, g\}$ and the components of $\partial$ that map between them form a complex that is dual to a subcomplex of the knot Floer complex $K_n$ for $n=2$, the $(3,-1)$ cable of the figure eight knot. Specifically, it is dual to the subcomplex labeled Figure 2d in their paper, for the case $n=2$. Following the example of the authors, we will compute the almost $\iota_K$ map on $\CFKUV({Q(K)})$ up to almost local equivalence. That is, we will fix the image of the map modulo the ideal generated by $\cU$ and $\cV$. We require one more lemma before we will discuss our result. 

\begin{lemma}[Lemma 2.11, \cite{HKPS_2020}] Let f and g be graded chain maps between knot Floer complexes $(C_1, \partial_1)$ and $(C_2, \partial_2)$ over $\mathbb{F}[\cU, \cV]$, such that $\text{im} \hspace{0.1cm} \partial_1  \subset (\cU, \cV)$ and $\text{im} \hspace{0.1cm} \partial_2 \subset (\cU, \cV)$. Suppose that either $f \simeq g \muv$ or $f \simeqd g \muv$. Then,
$$f = g \muv$$
\end{lemma}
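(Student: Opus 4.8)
The plan is to reduce everything modulo the ideal $(\cU,\cV)$ and exploit the fact that the hypotheses force the differentials of the reduced complexes to vanish. Write $\overline{C_i} := C_i/(\cU,\cV)C_i$ for the quotient, an $\mathbb{F}$-vector space. Since $\operatorname{im}\partial_i \subseteq (\cU,\cV)C_i$ by hypothesis, the differential $\partial_i$ induces the zero map $\overline{\partial_i}=0$ on $\overline{C_i}$; so each $\overline{C_i}$ is a chain complex with trivial differential, and every $\mathbb{F}$-linear map into or out of it is a chain map automatically. The maps $f,g$ induce $\mathbb{F}$-linear maps $\overline f,\overline g\colon \overline{C_1}\to\overline{C_2}$, and the claim $f = g \muv$ is precisely the statement $\overline f = \overline g$.

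Now suppose $f \simeq g \muv$. I would unwind this to mean that $\overline f$ and $\overline g$ are chain homotopic, i.e.\ there is an $\mathbb{F}$-linear map $\overline H\colon \overline{C_1}\to\overline{C_2}$ with $\overline f - \overline g = \overline{\partial_2}\circ\overline H + \overline H\circ\overline{\partial_1}$. Because $\overline{\partial_1}=\overline{\partial_2}=0$, the right-hand side vanishes, hence $\overline f=\overline g$, which is the desired conclusion. For the case $f \simeqd g \muv$, nothing new is needed: upon reduction modulo $(\cU,\cV)$, multiplication by $\cU$ and by $\cV$ both act as $0$ on $\overline{C_1}$ and $\overline{C_2}$, so there is no distinction between an $\mathbb{F}[\cU,\cV]$-equivariant map and a skew-equivariant one, nor between the corresponding homotopies; the same one-line computation with $\overline H$ the reduction of the skew-homotopy gives $\overline f = \overline g$.

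I do not anticipate any real obstacle; the entire content is the observation that $\partial_1$ and $\partial_2$ die modulo $(\cU,\cV)$, so that every homotopy term in the chain-homotopy identity is killed. The one point I would be careful to record is the convention: ``$f\simeq g \muv$'' (resp.\ ``$f\simeqd g\muv$'') is to be read as asserting the existence of a map $H$ for which the chain-homotopy identity $f-g=\partial_2 H+H\partial_1$ holds after reducing mod $(\cU,\cV)$ — once this is made precise, the argument is immediate and parallels Lemma 2.11 of \cite{HKPS_2020}.
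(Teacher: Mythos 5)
Your argument is correct and is exactly the standard one: the paper itself gives no proof (it cites Lemma 2.11 of \cite{HKPS_2020} directly), and the HKPS proof is precisely your observation that both homotopy terms $\partial_2\circ H$ and $H\circ\partial_1$ land in $(\cU,\cV)$ --- the first because $\operatorname{im}\partial_2\subset(\cU,\cV)$, the second because $H$ (equivariant or skew-equivariant) preserves the ideal $(\cU,\cV)$ while $\operatorname{im}\partial_1\subset(\cU,\cV)$. Your packaging via the quotient complexes $\overline{C_i}$ with vanishing induced differentials, and your explicit note that equivariance and skew-equivariance coincide after reduction, are exactly the right points to record.
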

\vskip.1in

Now, we are ready to discuss our partial computation of our $\iota_K$. We provide the following lemma to summarize the relevant results of this partial computation.

\begin{lemma}
\label{lem:caseone}
When our knot Floer complex is organized as in Case One, we have that 
        \begin{enumerate}
            \item \label{lem:caseoneA} $\iota_{K}(g) = g \muv$
            \item \label{lem:caseoneB} $\iota_{K}(a) = a + g \muv$
            \item \label{lem:caseoneC} $g$ can be homotoped away from appearing in $\iota_{K}$ of any other basis elements
        \end{enumerate}
\end{lemma}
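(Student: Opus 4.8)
The plan is to reduce the problem to finite--dimensional linear algebra over $\mathbb{F}$ by passing modulo $(\cU,\cV)$, use the grading tables to eliminate almost all potential terms, and feed in the one genuinely nontrivial input --- the involution on the $\{a,\dots,g\}$-subcomplex --- from the Hom--Kang--Park--Stoffregen computation via the duality flagged in the text. First I would record that every differential of $\CFKUV(Q(4_1))$ appearing in Tables \ref{tablesarkarL} and \ref{tablesarkarsquare}, as well as the lone diagonal $\partial b = \cU\cV g$, carries a factor of $\cU$ or $\cV$; hence $\partial \equiv 0 \pmod{(\cU,\cV)}$, and an almost $\iota_K$-map is just a skew-bigraded $\mathbb{F}$-linear endomorphism of $\CFKUV(Q(4_1))/(\cU,\cV)$ constrained only by (a) the bigradings of Tables \ref{grtableL} and \ref{grtableOne}, since we are in Case One, (b) the relation $\iota_K^2 = \mathrm{Id}+\Phi\circ\Psi \pmod{(\cU,\cV)}$, with $\Phi\circ\Psi$ read off those same tables, and (c) the homology condition, under which $\iota_K$ induces the identity on the one--dimensional homology generated by $[a]$. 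Because $\iota_K$ is skew-graded and $g$ has Alexander grading $0$ (so $\gr_{\cU}(g)=\gr_{\cV}(g)$), the generator $g$ can occur in $\iota_K(\eta)$ only when $\eta$ shares the bigrading of $g$; scanning Tables \ref{grtableL} and \ref{grtableOne} leaves $a$, $g$, and a short explicit list of box generators as the only candidates.

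For parts \ref{lem:caseoneA} and \ref{lem:caseoneB}, I would transport the Hom--Kang--Park--Stoffregen involution on the subcomplex of $K_n$ with $n=2$ labeled Figure~2d of \cite{HKPS_2020} --- the subcomplex to which $\{a,\dots,g\}$ is dual, as noted above --- to obtain $\iota_K(g)=g$ and $\iota_K(a)=a+g$ on the L-shaped subcomplex, the surviving $+g$ in $\iota_K(a)$ being exactly the obstruction term that must remain. To upgrade this to a statement about $\iota_K$ on all of $\CFKUV(Q(4_1))$, I would check that the only conceivable extra terms are box generators, which are cleared away in \ref{lem:caseoneC}, together with a possible $a$ in $\iota_K(g)$; the latter is excluded because $\Phi\circ\Psi(g) = \Phi\circ\Psi(a) = 0$ (Table \ref{tablesarkarL}) forces $\iota_K^2(g)=g$ and $\iota_K^2(a)=a$, which is incompatible with $\iota_K(g)=g+a$ once $\iota_K(a)=a+g$ is in hand. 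Throughout, Lemma 2.11 of \cite{HKPS_2020} lets me pass from the skew-homotopies these manipulations produce to honest equalities modulo $(\cU,\cV)$.

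For part \ref{lem:caseoneC} I would run a Gauss-elimination argument on the ``$g$-row'' of the matrix of $\iota_K \bmod (\cU,\cV)$: for each basis element $\eta \notin\{a,g\}$ in whose image $g$ appears, change basis by pairing $\eta$ with another generator of its bigrading carrying the same $g$-term, or by absorbing $g$ using the identities $\partial e = \cV^2 g$, $\partial f = \cU^2 g$, $\partial b = \cU\cV g$; each step strictly decreases the number of nonzero $g$-entries while leaving only the entry in the $a$-row. The main obstacle is the bookkeeping required here: every such change of basis must be shown to be bigraded and Alexander-filtered (immediate, as paired generators share a bigrading), to preserve the chain--complex structure over $\mathbb{F}[\cU,\cV]$ (the six square subcomplexes and the single diagonal $\partial b=\cU\cV g$), and to keep $\iota_K^2=\mathrm{Id}+\Phi\circ\Psi \pmod{(\cU,\cV)}$ --- and, crucially, removing a $g$-term from one $\iota_K(\eta)$ may reintroduce $g$ in $\iota_K$ of other generators or in $\iota_K^2$. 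Ordering the eliminations by decreasing Maslov grading so that the process terminates, and verifying compatibility with the partial computation of $\iota_K$ carried out in this section, is the technical heart of the argument.
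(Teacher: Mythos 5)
There is a genuine gap at the heart of parts \ref{lem:caseoneA} and \ref{lem:caseoneB}. You propose to ``transport'' the involution computed by Hom--Kang--Park--Stoffregen on the subcomplex of $\CFK(K_2)$ to which $\{a,\dots,g\}$ is dual, and thereby read off $\iota_K(g)=g$ and $\iota_K(a)=a+g$. But the duality noted in the text is only a formal resemblance between chain complexes of two \emph{different} knots ($Q(4_1)$ versus the $(3,-1)$-cable of $4_1$); $\iota_K$ is extra data beyond the chain homotopy type of $\CFK_{\cR}$, so nothing functorial lets you carry the HKPS involution over, and in any case $\iota_K$ need not even preserve the L-shaped subcomplex inside the full complex of $Q(4_1)$. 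What the paper actually borrows from \cite{HKPS_2020} is the \emph{method}, and the hard content --- showing $\iota_K(a)\neq a$, i.e.\ that $g$ genuinely appears in $\iota_K(a)$ --- is established by a long case analysis: one assumes $\iota_K(a)=a$, tracks the homotopy $H$ realizing $\iota_K^2\simeq 1+\Phi\Psi$ through $\iota_K^2(c)$ and $\iota_K^2(d)$, and derives a parity contradiction between the number of copies of $\cV g$ in $\iota_K^2(c)$ and of $\cU g$ in $\iota_K^2(d)$, after first cataloguing (via grading tables and boundary constraints) exactly which elements can contribute such terms. Your proposal has no substitute for this argument. The problem then propagates: your exclusion of $a$ from $\iota_K(g)$ relies on already knowing $\iota_K(a)=a+g$ (if instead $\iota_K(a)=a$, then $\iota_K(g)=g+a$ is perfectly consistent with $\iota_K^2(g)=g$), whereas the paper excludes it independently via $\partial\iota_K(b)=\cU\cV\,\iota_K(g)$, which forces every summand $\mu$ of $\iota_K(g)$ to have $\cU\cV\mu$ a boundary.

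A secondary issue: you assert that after reducing mod $(\cU,\cV)$ the only constraints are the bigradings, the Sarkar relation, and the action on homology. This discards the constraints the paper uses most heavily, which come from $\iota_K$ being a chain map over the full ring $\mathbb{F}[\cU,\cV]$ --- e.g.\ any $\eta$ appearing in $\iota_K(z_i)$ must have $\cU\eta$ a boundary because $\partial w_i=\cV z_i$, and similarly for the exclusions in the big case analysis. Since $\partial\equiv 0$ mod $(\cU,\cV)$, working purely in the quotient loses exactly the leverage needed to rule out terms like $a$ in $\iota_K(g)$ or $\cV a$ in $\iota_K(d)$. Your outline for part \ref{lem:caseoneC} is in the right spirit (the paper uses explicit skew-homotopies such as $H(y_3)=g$, $H(w_5)=g$, $H(b)=g$), but it presupposes the narrowing-down of candidates that only the full chain-map analysis provides.
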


\vskip.1in
\begin{proof}
For ease of reading, we proceed claim by claim to build the proof of this lemma.
\begin{enumerate}[label={[\Alph*]}]
    \item \label{claimA} \textbf{Claim}: $$\iota_{K}(z_3) = z_5 \muv$$
                $$\iota_{K}(z_5) = z_3 \muv$$
    \textit{Proof of claim:} First, we note that all $z_i$ must satisfy $\partial \iota_{K}(z_i) = \iota_{K}(\partial z_i) = 0$. In addition, the fact that $\partial w_i = \cV z_i$ implies that $\partial \iota_{K}(w_i) = \iota_{K}(\cV z_i) = \cU \iota_{K}(z_i)$, so every element $\eta$ that appears in $\iota_{K}(z_i)$ must satisfy that $\cU \eta$ is a boundary. In particular, combined with the data from Tables \ref{grtableL} and \ref{grtableOne}, these facts give us that $\iota_{K}(z_3)$ is an $\mathbb{F}[\cU\cV]$-linear combination of $z_5$ and $\cU^{2}z_3$, and $\iota_{K}(z_5)$ is an $\mathbb{F}[\cU\cV]$-linear combination of $z_3$ and $\cV^{2}z_3$. Our claim follows immediately.
    
    \vspace{0.25cm}
    \item \label{claimB} \textbf{Claim}: 
     $$\iota_{K}(g) = g \muv$$
    \textit{Proof of claim:} We note that $\partial \iota_{K}(g) = \iota_{K}(\partial g) = 0$, and that $g$ is in grading $(0,0)$ mod 2, with $gr_{\cU} - gr_{\cV} = 0$. So, we know that $\iota_{K}(g) \muv$ will be a linear combination of $\{a, g\}$. But also, as $\partial \iota_{K}(b) = \iota_{K}(\partial b) = \iota_{K}(\cU\cV g) = \cV\cU\iota_{K}(g)$, so then any element $\mu$ that appears in $\iota_{K}(g)$ must satisfy that $\cV\cU \mu$ is a boundary. Thus, $a$ is not in $\iota_{K}(g)$, and we conclude $\iota_{K}(g) = g \muv$.
    \vspace{0.25cm}
    \item \label{ClaimC} \textbf{Claim}: $$\iota_{K}(a) = a + g \muv$$ 
    This statement is proved via the proof of the following claims:
    \vspace{0.5cm}
        \begin{enumerate}[label={[\alph*]}]
        \item \textbf{Claim}: $$\iota_{K}(a) \muv \hspace{.1cm} \text{is a} \hspace{.1cm} \text{linear combination of} \hspace{.1cm} \{a, g\}$$
        \textit{Proof of claim:} This follows from identical logic as in the proof of \textbf{Claim} \ref{claimB}.
        
        \vspace{0.3cm}
        \item \textbf{Claim}: $$\iota_{K}(a) \neq 0 \muv$$
        \textit{Proof of claim:} Since $\iota_{K}$ squares to something homotopic to the Sarkar map, then if for sake of contradiction $\iota_{K}(a) = 0 \muv$, we have that
        $$0 = \iota_{K}^{2}(a) = a + H(\partial a) + \partial H(a) = a + \partial H(a)$$ \noindent But then this implies that $a = \partial H(a)$, and $a$ is not a boundary, which gives us our contradiction. 
        \vspace{0.3cm}
        \item \textbf{Claim}: $$\iota_{K}(a) \neq g \muv$$
        \textit{Proof of claim:} For sake of contradiction, if $\iota_{K}(a) = g \muv$, then combined with what we showed in \textbf{Claim} \ref{claimB}, we have that $\iota_{K}^{2}(a) = \iota_{K}(g) = g \muv$. However, we also have that $\iota_{K}^{2}(a) = a + H(\partial a) + \partial H(a) = a + \partial H(a)$. These statements combined would imply that $$g = a + \partial H(a) \muv$$
        $$\therefore \hspace{.25cm} a + g = \partial H(a) \muv$$
        \noindent which is a contradiction, as neither $a$ nor $g$ are boundaries. 
        \vspace{0.3cm}
        \item \textbf{Claim}: $$\iota_{K}(a) \neq a \muv$$ 
            \begin{enumerate}[label={[\Roman*]}]
                \item For sake of contradiction we assume that $\iota_{K}(a) = a \muv$.
                \vspace{0.25cm}
                \item Because $\iota_{K}^{2} \simeq 1 + \Phi\Psi$, we know that the map $H$ realizing this homotopy must satisfy:  
                
                \begin{equation} \label{eq1}
                \begin{split}
                \iota_{K}^{2}(d) & = d + \cU g + \partial H(d) + H(\partial d) \\
                & = d + \cU g + \partial H(d) + \cU H(b) + \cV H(e)
                \end{split}
                \end{equation}
                and, simultaneously, 
                \begin{equation} \label{eq2}
                \begin{split}
                \iota_{K}^{2}(c) & = c + \partial H(c) + H(\partial c) \\
                & = c + \partial H(c) + \cV H(b) + \cU H(f)
                \end{split}
                \end{equation}
                We will consider two subcases:
                \begin{enumerate}[label={[\roman*]}]
                    \item Either $g$ appears in $H(b)$; in which case, the \hspace{.01cm} $\cU g$ on the right side of equation \ref{eq1} is canceled, and thus the left side of equation \ref{eq1} admits $0$ (mod $2$) copies of \hspace{.01cm} $\cU g$. Also, this produces a copy of $\cV g$ on the right side of equation \ref{eq2}, and thus the left side of equation \ref{eq2} must admit $1$ (mod $2$) copies of $\cV g$.
                    \item Or, $g$ does not appear in $H(b)$; in which case, the copy of \hspace{.01cm} $\cU g$ on the right side of equation \ref{eq1} is not canceled, and thus the left side of equation \ref{eq1} must admit $1$ (mod $2$) copies of \hspace{.01cm} $\cU g$. Also, this means that the right side of equation \ref{eq2} and likewise, its left side as well, have 0 (mod 2) copies of \hspace{.01cm} $\cV g$.
                    \vspace{.25cm}
                \end{enumerate}
                
                \item \label{ClaimIII} The above two subcases indicate that, regardless of which case we are in, \\
                $$\hspace{1in} (\text{copies of} \hspace{.15cm} \cV g \hspace{.15cm} \text{in} \hspace{.15cm} \iota_{K}^{2}(c)) - (\text{copies of} \hspace{.15cm} \cU g \hspace{.15cm} \text{in} \hspace{.15cm} \iota_{K}^{2}(d)) \equiv 1 \hspace{.15cm} \text{mod} \hspace{.15cm} 2$$
                \vspace{.05cm}
                
                However, we claim that if $\iota_{K}(a)  = a \muv$, that in fact these amounts must be congruent mod 2, which gives us our contradiction.
                \vspace{.25cm}
                \\
                The proof of this claim involves identifying which elements $\vartheta$ can appear in $\iota_{K}(d)$ and in turn contribute a factor of $\cU g$ to $\iota_{K}^{2}(d)$; that is, for which $\cU g$ appears in $\iota_{K}(\vartheta)$. (Likewise, we will identify which elements $\vartheta^{'}$ can appear in $\iota_{K}(c)$ and for which $\cV g$ appears in $\iota_{K}(\vartheta^{'})$.) 
                
                \begin{enumerate}[label={[\roman*]}]
                    \item We begin with the assumption in statement [I]; in particular, that $g$ does not appear in the image of $\iota_{K}(a) \muv$. 
                    \item The only basis elements $\vartheta$ which could map to $\cU g$, based on the grading information given in Tables \ref{grtableL} and \ref{grtableOne}, are $\mathbb{F}[\cU \cV]$-factors of $$\hspace{4cm} \{ \cV g, c, \cV^{2} d, \cU w_{1}, \cU w_{4}, \cV y_{1}, \cV y_{4}, \cV w_{2}, \cV w_{6}, \cV^{3} y_{2}, \cV^{3} y_{6}, x_{3}, z_{3}, \cV^{2} x_{5}, \cV^{2} z_{5} \}$$
                    \noindent Note we have removed from consideration $\cV a$ per our assumption in part [i].
                    \item \label{b2b} As a chain map, $\iota_{K}$ must take boundaries to boundaries. Since $\cU g$ is not a boundary, it cannot appear in the image of any boundary under this map. Thus, $\cU g$ cannot appear in the image of any of the following elements: $$\hspace{4cm} \{  \cU w_{1}, \cU w_{4}, \cV y_{1}, \cV y_{4}, \cV^{3} y_{2}, \cV^{3} y_{6}, \cV^{2} z_{5} \}$$
                        \item \label{b2b2} For morally the same reasoning as used in [iii], $\cU g$ can appear in neither $\iota_{K}$ of either $\cV w_{2}$ nor $\cV w_{6}$. While these elements are not boundaries over $\mathbb{F}[\cU, \cV]$, a factor of $g$ appearing in their image under $\iota_{K}$ will still violate how the map must commute the differential. Consider: if $\cU g$ were to appear in $\iota_{K}(\cV w_{i})$, this would imply that $g$ appears in $\iota_{K}(w_{i})$, which in turn tells us that as $$\hspace{1.3in} \partial \iota_{K}(x_{i}) = \iota_{K}(\partial x_{i}) = \iota_{K}(\cU w_{i} + \cV y_{i}) = \cV \iota_{K}(w_{i}) + \cU \iota_{K}(y_{i})$$ then $\cV g$ must be a boundary, which it is not. 
                    \item \label{biglist} We now have that any of the elements $\{ c, \cV g, \cV^{2} d, x_{3}, \cV^{2} x_{5}, z_{3} \}$ can appear in $\iota_{K}(d)$ and could contribute a factor of $\cU g$ to $\iota_{K}^{2}(d)$; identical reasoning with skew-graded basis elements gives us that the only elements that can appear in $\iota_{K}(c)$ and contribute a factor of $\cV g$ to $\iota_{K}^{2}(c)$ are $\{ d, \cU g, \cU^{2} c, x_{5}, \cU^{2} x_{3}, z_{5} \}$. We further claim that in fact:
                    \begin{itemize}[leftmargin=1.5cm]
                        \item $\cV^{2} d$ is in $\iota_{K}(d)$ if and only if $\cU^{2}c$ is in $\iota_{K}(c)$ and $g$ is in $\iota_{K}(\cU d) $ if and only if $ g$ is in $\iota_{K}(\cV c)$
                        \item $\cV^{2} x_{5}$ is in $\iota_{K}(d) $ if and only if $ \cU^{2} x_{3}$ is in $\iota_{K}(c)$, and $g$ is in $\iota_{K}(\cU x_{5}) $ if and only if $ g$ is in $\iota_{K}(\cV x_{3})$
                        \item $x_{3}$ is in $\iota_{K}(d) $ if and only if $ x_{5}$ is in $\iota_{K}(c)$, and $\cU g$ is in $\iota_{K}(x_{3}) $ if and only if $ \cV g$ is in $\iota_{K}(x_{5})$
                        \item $z_{3}$ is in $\iota_{K}(d) $ if and only if $ z_{5}$ is in $\iota_{K}(c)$, and $\cU g$ is in $\iota_{K}(z_{3}) $ if and only if $ \cV g$ is in $\iota_{K}(z_{5})$
                    \end{itemize}
                    \item We will prove the first claim; the subsequent three claims follow precisely the same proof structure and logic. The crux of this statement is that $c$ must appear in $\iota_{K}(d)$ and $d$ must appear in $\iota_{K}(c)$, and that they are the only basis elements that do so; namely, $d$ is the only basis element that appears in $\iota_{K}(c)$ and vice versa. This fact is easy to see from studying the grading Tables \ref{grtableL} and \ref{grtableOne}, and by the fact that the Sarkar map reduces to the identity$\muv$ on all our basis elements. Extending this argument tells us that also $e$ must appear in $\iota_{K}(f)$ and vice versa, and $b$ must appear in $\iota_{K}(b)$. We are now ready to outline the arguments of our claim. ({\em A style note: we here move outside of our nested list for clarity of argument and to avoid a too-deeply-nested list. We return to it below}).

\end{enumerate}
\end{enumerate}
\end{enumerate}
\end{enumerate}
\vspace{.5cm}

\begin{proof} Suppose that $\cV^{2}d$ is in $\iota_{K}(d)$. Since $\iota_{K}$ commutes the differential, $\partial \cV^{2} d = \cV^{3} f + \cV^{2} \cU b$ appears in $\partial \iota_{K}(d) = \cV \iota_{K}(b) + \cU \iota_{K}(f)$. This tells us that $\cV^{2}f$ must be in $\iota_{K}(b)$. Since also $\iota_{K}(\partial c) = \partial \iota_{K}(c)$, we must have that either $\cU \cV d$ is in $\iota_{K}(c)$, or $\cU\cV f$ is in $\iota_{K}(e)$.
\begin{enumerate}[leftmargin=1.5cm, labelsep=.1cm,align=left]

\item \label{part1} Suppose $\cU \cV d$ is in $\iota_{K}(c)$. Then, the rest of $\partial \cU \cV d$ must also appear in $\partial \iota_{K}(c)$; namely, either $\cU^{2}b$ appears in $\iota_{K}(e)$ or $\cU \cV b$ appears in $\iota_{K}(b)$.

\begin{enumerate}[leftmargin=1.5cm, labelsep=.1cm,align=left, label=(1.\alph*)]

\item \label{part1b} Suppose $\cU^{2 }b$ appears in $\iota_{K}(e)$. Then, since $\partial \iota_{K}(e) = \iota_{K}(\partial e)$, we have that $\cU \cV g$ must appear in $\iota_{K}(g)$. This in turn tells us that $\cV^{3} \cU g$ appears in $\partial \iota_{K}(f)$, so either:
    \begin{enumerate}[leftmargin=1.5cm, labelsep=.1cm,align=left, label=(1.a.\roman*)]
        \item \label{part1bi} $\cU \cV e$ appears in $\iota_{K}(f)$; then, we have that $\cU^{2} \cV e$ appears in $\partial \iota_{K}(d)$. We have two possible cases:
            \begin{enumerate}[leftmargin=1.5cm, labelsep=.1cm,align=left, label=(1.a.i.\Alph*)]
            \item \label{part1aiA} $\cU^{2} e$ appears in $\iota_{K}(b)$; this in turn would imply that $\cU^{2} c$ must appear in $\iota_{K}(c)$, our desired result.
            \item \label{part1aiB} $\cU \cV c$ appears in $\iota_{K}(d)$; but this would mean that the factor of $\cU \cV^{2} b$ must cancel in $\partial \iota_{K}(d)$, and thus since $\cV^{2} b$ is in $\iota_{K}(f)$, then also $\cU \cV b$ appears in $\iota_{K}(b)$. But then we have $\cV^{2} f$ and $\cU \cV b$ are both in $\iota_{K}(b)$, so too must $\cU^{2} e$ appear in $\iota_{K}(b)$. But this is the statement of \ref{part1aiA} and hence a contradiction.
            \end{enumerate}
            
        \item \label{part1bii} $\cV^{2} b$ appears in $\iota_{K}(f)$. Then, we have the following facts: $\cV^{2} f$ appears in $\iota_{K}(b)$ and $\cV^{2} b$ appears in $\iota_{K}(f)$. So, we look to the Sarkar map; we have that
        \begin{align*}
            \begin{split}
            \iota_{K}^{2}(b) & = b + \partial H(g) + H (\partial b) \\
            & = \iota_{K}(b + \cV^{2} f + \cdots) \\
            & = b + \cU^{2} e + \cU^{2} \cV^{2} b  + \cdots
            \end{split}
        \end{align*}
        \noindent Then, there exists some element $\alpha$ in $\iota_{K}(b)$ such that $\cU^{2} \cV^{2} b$ is in $\iota_{K}(\alpha)$. Based on grading data alone, such an $\alpha$ must be in the set 
        $$\hspace{2in} \{ \cU^{2} e, \cU \cV b, \cU^{2} w_{1}, \cU^{2} w_{4}, y_{1}, y_{4}, w_{2}, w_{6}, \cV^{2} y_{2}, \cV^{2} y_{6}, \cU x_{3}, \cU z_{3}, \cV x_{5}, \cV z_{5} \} $$
        
        \noindent By hypothesis, $\cU^{2} e$ and $\cU \cV b$ are not in $\iota_{K}(b)$. Then, we can use an $\muv$ argument to eliminate any basis elements, since we know that the only basis element that appears in $\iota_{K}(b)$ must have any basis elements in its image appear in $\iota_{K}^{2}(b) \muv$; in particular, only $b$. Then, $\cU x_{3}$ is not in $\iota_{K}(b)$, as else $\cU^{2} x_{3}$ would appear in $\partial \iota_{K}(c)$, and it is not a boundary. Likewise, $\cV x_{5}$ cannot appear in $\iota_{K}(b)$ since $\cV^{2} x_{5}$ would appear in $\partial iota_{K}(d)$ and it is not a boundary. If $\cU^{2} \cV^{2} b$ were to appear in $\iota_{K}(\cU z_{3})$, then this implies that $\cU^{2}\cV b$ appears in $\iota_{K}(z_{3})$. This in turn tells us that $\cU^{3} \cV^{2} g$ appears in $\iota_{K}(z_{3})$; it must cancel, since $\partial z_{3} = 0$. But the only two options are $\cU^{3} f$ in $\iota_{K}(z_{3}$, which it cannot, since $\partial \iota_{K}(w_{3}) = \cU \iota_{K}(z_{3})$ and $\cU^{4} f$ is not a boundary; or $\cU \cV ^{2}$ is in $\iota_{K}(z_{3})$; but $\cU^{2} \cV b + \cU \cV^{2} e$ is not actually a boundary over $\mathbb{F}[\cU, \cV]$. Finally, we can use the Sarkar map $\muv$ to conclude that up to a change of basis, $y_{2}$ is in $\iota_{K}(w_{1})$ and vice versa, and they are the unique basis elements in each other's images. This tells us that should $\cV^{2} y_{2}$ appear in $\iota_{K}(b)$ with $\cU^{2} \cV^{2} b$ in its image under $\iota_{K}$, then so too must $\cU^{2} w_{1}$ appear in $\iota_{K}(b)$ with $\cU^{2} \cV^{2} b$ in its image under $\iota_{K}$; and thus these factors of $\cU^{2} \cV^{2} b$ would cancel in $\iota_{K}^{2}(b)$. A similar argument tells us that $\cU^{2} w_{4}$ and $\cV^{2} y_{6}$ will behave in the same way. Thus, in accordance with our hypotheses, there is no element that can appear in $\iota_{K}(b)$ and contribute a $\cU^{2} \cV^{2} b$ to $\iota_{K}^{2}(b)$. 
    \end{enumerate}

\item \label{part1a} $\cU \cV b$ appears in $\iota_{K}(b)$. Then, since $\partial \iota_{K}(b) = \iota_{K}(\partial b)$, we can conclude that $\cU \cV g$ appears in $\iota_{K}(g)$. Since $\partial \iota_{K}(e) = \cU^{2} \iota_{K}(g)$, then either:
    \begin{enumerate}[leftmargin=1.5cm, labelsep=.1cm,align=left, label=(1.b.\roman*)]
        \item \label{part1ai} $\cU \cV f$ appears in $\iota_{K}(e)$; but this is the statement of \ref{part2}, and hence a contradiction.
        \item \label{part1aii} $\cU^{2} b$ appears in $\iota_{K}(e)$; but this is the statement of \ref{part1b}, and hence a contradiction.
    \end{enumerate}
\end{enumerate}

\item \label{part2} Suppose $\cU\cV f$ is in $\iota_{K}(e)$.  Then, we can use this to again show that $\cU \cV g$ must appear in $\iota_{K}(g)$, and thus $\cV^{3} \cU g$ appears in $\partial \iota_{K}(f)$. So we have that either:
\begin{enumerate}[leftmargin=1.5cm, labelsep=.1cm,align=left, label=(2.\alph*)]
    \item \label{part2a} $\cU \cV e$ appears in $\iota_{K}(f)$. Then, this tells us that $\cU^{2} \cV e$ must appear in $\iota_{K}(d)$, so either: 
        \begin{enumerate}[leftmargin=1.5cm, labelsep=.1cm,align=left, label=(2.a.\roman*)]
            \item \label{part2ai} $\cU^{2} e$ appears in $\iota_{K}(b)$. Then, since $\partial \iota_{K}(b) = \iota_{K}(\partial b)$, either $\cU \cV b$ and $\cV^{2} f$ are both in $\iota_{K}(b)$, or neither are (in either case, we will have 1 mod 2 copies of $\iota_{K}(\partial b)$ in $\partial \iota_{K} (b)$, so these outcomes are logically equivalent). Since we have assumed that $\cV^{2} f$ is in $\iota_{K}(b)$ in our originally hypothesis, then $\cU \cV b$ must be in $\iota_{K}(b)$. But this would imply that $\cU \cV d$ appears in $\iota_{K}(c)$, which is the statement of \ref{part1} and hence a contradiction.  
            \item \label{part2aii} $\cU \cV c$ appears in $\iota_{K}(d)$. This would mean that there are two copies of $\cV^{2} \cU b$ in $\partial \iota_{K}(d)$ (one each from $\cV^{2} d$ and $\cU \cV c$) and hence must cancel; namely, $\cV^{2} b$ must appear in $\iota_{K}(f)$ and $\cU \cV b$ must appear in $\iota_{K}(b)$. But, then as in \ref{part2ai} we must have 1 mod 2 copies of $\iota_{K}(\partial b)$ in $\partial \iota_{K} (b)$. So then we must have also $\cU^{2} e$ appears in $\iota_{K}(b)$; but this is the statement of \ref{part2ai}, and hence a contradiction.
        \end{enumerate}
    \item \label{part2b} $\cV^{2} b$ appears in $\iota_{K}(f)$. Then, we have the same hypotheses as in \ref{part1bii} and so can repeat the proof process there to arrive at a contradiction.
\end{enumerate}
\end{enumerate}
\end{proof}
\vspace{1cm}
 
\begin{enumerate}[leftmargin=1.65in, label={[\roman*]}]
\setcounter{enumi}{7}
\item The reverse implication is exactly the same argument with skew-gradings; and thus we can conclude that $\cV^{2} d$ appears in $\iota_{K}(d) \iff \cU^{2} d$ appears in $\iota_{K}(c)$. The argument that $g$ appears in $\iota_{K}(\cU d) \iff g$ appears in $\iota_{K}(\cV c)$ is identical to an argument we have used many other times in this paper; namely, that $c$ is the unique basis element in the image of $\iota_{K}(d)$, $d$ is the unique basis element in the image of $\iota_{K}(c)$, and $g$ is the unique basis element in the image of $\iota_{K}(g)$. Tracing through the Sarkar map with these facts in mind will give us our desired result. 
\item Then, we assume that we have shown all four bulleted statements listed in \ref{biglist}; it is thus a linear algebra argument to conclude that regardless of which the images of $\iota_{K}(d)$ and $\iota_{K}(c)$, and regardless as to which elements do in fact contribute the requisite $\cU g$ (resp. $\cV g$) factors, we will always have that the copies of $\cU g$ in $\iota_{K}^{2}(d)$ and the copies of $\cV g$ in $\iota_{K}^{2}(c)$ are congruent mod 2, which contradicts \ref{ClaimIII}. Thus, $\cV a$ must both appear in $\iota_{K}(d)$ and, more significantly, must contribute a copy of $\cU g$ to $\iota_{K}^{2}(d)$. Indeed, then $a$ and $g$ both must appear in the image of $\iota_{K}(a)$. 

\end{enumerate}

\vspace{.5cm}

\begin{enumerate}[label={[\Alph*]}]
\setcounter{enumi}{3}
\item  It remains to show that, should $g$ appear in the image of $\iota_{K}$ of any basis elements besides $g$ and $a$, it can be homotoped away. Indeed, we claim the only cases we must consider for Case One are if $g$ appears in $\{\cU c, \cV d, \cU x_{3}, \cV x_{5} \}$; this follows from the work done in Claims \ref{b2b} and \ref{b2b2}.
\begin{enumerate}
\item \label{x3skewhom} First, we consider if $\cU g$ appears in $\iota_{K}(x_{3})$. Then, we can consider $\iota_{K}$ and $\iota_{K}^{'}$ such that $\iota_{K} + \iota_{K}^{'}$ is zero on every basis element except for $x_{3}$, where $(\iota_{K} + \iota_{K}^{'})(x_{3}) = \cU g$. Then, we claim there exists a skew-homotopy $H$ such that $\iota_{K} + \iota_{K}^{'} = \partial H + H \partial$. Indeed, the skew-homotopy defined by $H(y_{3}) = g$ and is zero on all other basis elements can easily be seen to satisfy the requirements. Similarly, following this exact argument, we can homotope away the following multiples of $g$ from the remaining basis elements via the given skew-homotopies: 
    \begin{enumerate}
        \item \label{x5skewhom} $\cV g$ can be homotoped away from $\iota_{K}(x_{5})$ via the skew-homotopy defined by $H(w_{5}) = g$, $H(*) = 0$ for $*$ all other basis elements
        \item \label{cskewhom} Simultaneously we can homotope away $\cU g$ from $\iota_{K}(c)$ and $\cV g$ from $\iota_{K}(d)$ via the skew-homotopy defined by $H(b) = g$, $H(*) = 0$ for $*$ all other basis elements
    \end{enumerate}
\end{enumerate}
\end{enumerate}

\end{proof}

Case Two requires an argument which is structurally the same as that of Case One, with a few caveats. It is perhaps easiest to first present the lemma statement, and identify where in the proof of the Lemma \ref{lem:caseone} the analogous proof will diverge. We will make use of the nested list labels above to reference specific steps of the proof.

\begin{lemma}
\label{lem:casetwo}
When our knot Floer complex is organized as in Case Two, we have that:
        \begin{enumerate}
            \item \label{lem:casetwoA}$\iota_{K}(g) = g \muv$
            \textbf{or}
            $\iota_{K}(g) = g + z_{3} + z_{5} \muv$
            \item \label{lem:casetwoB} $\iota_{K}(a) = a + g \muv$
            \textbf{or}
            $\iota_{K}(a) = a + g + z_{3} + z_{5}$
            \item \label{lem:casetwoC} $g$ can be removed via a change of basis from appearing in $\iota_{K}$ of any other basis elements
        \end{enumerate}
\end{lemma}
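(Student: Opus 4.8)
The plan is to transplant the argument of Lemma~\ref{lem:caseone} essentially word for word; the only genuine difference is one change in the grading bookkeeping. Comparing Table~\ref{grtableL} with Table~\ref{grtableTwo}, in Case Two the cycles $z_3$ and $z_5$ lie in Alexander grading $0$ with $(\gr_{\cU},\gr_{\cV})\equiv(0,0)\muv$, i.e.\ in exactly the bigrading occupied by $a$ and $g$. So every place in the Case One proof where a grading count isolated a linear combination of $\{a,g\}$ now isolates a linear combination of $\{a,g,z_3,z_5\}$, and the argument must carry these two extra cycle terms along. Everything else -- the roles of $b,c,d,e,f$, the relations $\partial b=\cU\cV g$, $\partial c=\cU e+\cV b$, $\partial d=\cU b+\cV f$, and the box relations $\partial w_i=\cV z_i$, $\partial x_i=\cU w_i+\cV y_i$, $\partial y_i=\cU z_i$ of Table~\ref{tablesarkarsquare} -- is unchanged.

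First I would prove the Case Two analogues of Claims~\ref{claimA} and \ref{claimB}. Since $\partial z_i=0$ the element $\iota_K(z_3)$ is a cycle, and $\partial w_i=\cV z_i$ forces every basis element appearing in $\iota_K(z_3)$ to have $\cU$ times it a boundary; with the grading data this excludes $a$ and $g$ and leaves $\iota_K(z_3),\iota_K(z_5)\in\F[\cU\cV]\langle z_3,z_5\rangle$. Here, unlike Case One, the Alexander grading no longer separates $z_3$ from $z_5$, so one needs the extra input that $\iota_K$ interchanges Box~3 and Box~5 -- these two boxes being swapped by the conjugation symmetry -- possibly after a normalizing change of basis; this gives $\iota_K(z_3)=z_5$ and $\iota_K(z_5)=z_3\muv$. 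For $\iota_K(g)$: from $\partial\iota_K(b)=\iota_K(\cU\cV g)=\cU\cV\,\iota_K(g)$ every term $\mu$ of $\iota_K(g)$ has $\cU\cV\mu$ a boundary, which excludes $a$ exactly as in Case One ($\cU\cV a$ is not a boundary) but now permits $z_3,z_5$ since $\cU\cV z_i=\partial(\cU w_i)$; feeding $\iota_K(z_3)=z_5$ and $\iota_K^2(g)=g\muv$ (the Sarkar map is the identity on $g$, Table~\ref{tablesarkarL}) into $\iota_K(\iota_K(g))=\iota_K(g)\muv$ forces the $z$-correction of $\iota_K(g)$ to be $\iota_K$-invariant, hence $0$ or $z_3+z_5$. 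This is part~\ref{lem:casetwoA}.

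Part~\ref{lem:casetwoB} then follows by rerunning the contradiction argument of Claim~\ref{ClaimC} -- the chain of subclaims around \ref{biglist} and the long nested case analysis -- with ``$\iota_K(g)=g$'' replaced throughout by ``$\iota_K(g)=g+\varepsilon(z_3+z_5)$''. The crucial observation is that that argument only ever counts copies of $\cU g$ and $\cV g$ inside $\iota_K^2(c)$ and $\iota_K^2(d)$, and a cycle term $z_i$ contributes nothing to those counts: it is killed by $\partial$, and $\cU z_i,\cV z_i$ are boundaries distinct from $\cU g,\cV g$. Hence each step goes through unchanged, one again concludes that both $a$ and $g$ appear in $\iota_K(a)$, and then $\iota_K^2(a)=a+\partial H(a)\equiv a\muv$ together with $\iota_K(z_3)=z_5$ pins the $z$-part of $\iota_K(a)$ to that of $\iota_K(g)$, giving the two coupled options. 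For part~\ref{lem:casetwoC} I would repeat step~\ref{x3skewhom}: Claims~\ref{b2b} and \ref{b2b2}, whose proofs use only that $\iota_K$ is a chain map and that $g$ is not a boundary, cut the list of basis elements whose $\iota_K$-image could carry a multiple of $g$ down to $\{\cU c,\cV d\}$ together with a short list of box generators read off from Table~\ref{grtableTwo}, and each such occurrence is removed by an explicit $\cR$-skew-homotopy supported on one generator -- $H(b)=g$ removes $\cU g$ from $\iota_K(c)$ and $\cV g$ from $\iota_K(d)$ simultaneously, and $H(y_i)=g$ or $H(w_i)=g$ handles the box terms -- using that $\F$ has characteristic two.

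The main obstacle is the bookkeeping in part~\ref{lem:casetwoB}: one must verify that injecting the correction $z_3+z_5$ into $\iota_K(g)$, and through the nested argument into $\iota_K$ of several further generators, never creates or cancels a copy of $\cU g$ or $\cV g$ that the Case One counting relied on, and one must re-derive for the Case Two layout the exact list of ``leftover'' generators that might still carry a power of $g$ before checking that the corresponding skew-homotopies genuinely intertwine the $\cU\leftrightarrow\cV$ swap. A secondary subtlety, absent in Case One, is pinning the correction down to $z_3+z_5$ rather than $z_3$ or $z_5$ alone; this is precisely what the Box~3/Box~5 exchange property of $\iota_K$ (and, if necessary, a normalizing change of basis of the two boxes) supplies.
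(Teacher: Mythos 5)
Your parts (1) and (2) track the paper's argument: the paper likewise reduces $\iota_K(z_3),\iota_K(z_5)$ to $\F[\cU\cV]$-combinations of $z_3,z_5$ (it enumerates the four involutive $2\times 2$ options and declares them equivalent up to change of basis, rather than invoking a geometric Box~3/Box~5 symmetry, but the effect is the same), and it pins the $z$-correction of $\iota_K(g)$ and $\iota_K(a)$ exactly as you do, by feeding the swap into $\iota_K^2=\mathrm{Id}+\Phi\Psi\muv$ and using that neither $z_3$ nor $z_5$ is a boundary. Your observation that the $z$-corrections cannot disturb the $\cU g$/$\cV g$ counting in the Claim~\ref{ClaimC} argument is a correct (and worth stating) justification for re-running Case One verbatim.

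The genuine gap is in part~\ref{lem:casetwoC}. In Case Two the generators $x_3,x_5$ (and $z_3,z_5$) sit in the \emph{same} bigrading as $g$ itself (Table~\ref{grtableTwo}: Alexander grading $0$, $(\gr_{\cU},\gr_{\cV})\equiv(0,0)$), so the occurrences you must kill are of a bare $g$ in $\iota_K(x_3)$, $\iota_K(x_5)$, not of $\cU g$ or $\cV g$ as in Case One. A skew-homotopy cannot remove a bare $g$: for $H(y_3)=g$ one gets $(\partial H+H\partial)(x_3)=\cU H(y_3)+\cV H(w_3)=\cU g$, and more generally $(\partial H+H\partial)(x_3)$ always lands in $\operatorname{im}\partial+(\cU,\cV)$, while $g\notin\operatorname{im}\partial$. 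This is precisely why the Case Two statement reads ``removed via a change of basis'' rather than ``homotoped away'': the paper absorbs these terms by the substitutions $x_i\mapsto x_i+a$ (and $z_i\mapsto z_i+a$), which works because $\iota_K(a)=a+g+\cdots$, so adding $a$ to the source adds $g$ to the image and cancels the unwanted term over $\F_2$. Your proposed $H(y_i)=g$, $H(w_i)=g$ would only remove $\cU g$ or $\cV g$ terms and leaves the bare-$g$ occurrences untouched. (A secondary imprecision: the Sarkar relation $\iota_K^2(a)=a\muv$ gives $\delta_3+\delta_5=\epsilon$ for the coefficients of $z_3,z_5$ in $\iota_K(a)$ versus $z_3+z_5$ in $\iota_K(g)$, which is not literally ``the $z$-part of $\iota_K(a)$ equals that of $\iota_K(g)$''; you should state and use the relation in this form.)
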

\begin{proof} Here we proceed claim by claim from the proof of Lemma \ref{lem:caseone}.
\begin{enumerate}[label={[\Alph*]}]
    \item \label{claimA2} \textbf{Claim}: $$\iota_{K}(z_3) = z_5 \muv$$
                $$\iota_{K}(z_5) = z_3 \muv$$
                OR
                $$\iota_{K}(z_3) = z_5 + g \muv$$
                $$\iota_{K}(z_5) = z_3 + g \muv$$
    \textit{Proof of claim:} Since $z_{3}$ and $z_{5}$ are again in different relative gradings from the other $z_{i}$, it follows by identical logic to Lemma \ref{lem:caseone}, \textbf{Claim} \ref{claimA}, that $\iota_{K}(z_{3})$ is an $\mathbb{F}[\cU \cV]$-linear combination of $z_{3}$ and $z_{5}$ and likewise $\iota_{K}(z_{5})$ is an $\mathbb{F}[\cU\cV]$-linear combination of $z_{5}$ and $z_{3}$. Then, we have the following options for their images, according to the Sarkar map:
       \begin{enumerate}
            \item   \begin{align*}
                    \begin{split}
                    \iota_{K}(z_{3}) = & z_{3} \muv \\
                    \iota_{K}(z_{5}) = & z_{5} \muv
                    \end{split}
                    \end{align*}
            \item   \begin{align*}
                    \begin{split}
                    \iota_{K}(z_{3}) = & z_{3} + z_{5} \muv \\
                    \iota_{K}(z_{5}) = & z_{5} \muv
                    \end{split}
                    \end{align*}
            \item   \begin{align*}
                    \begin{split}
                    \iota_{K}(z_{3}) = & z_{3} \muv \\
                    \iota_{K}(z_{5}) = & z_{5} + z_{3} \muv
                    \end{split}
                    \end{align*}
            \item \label{zsmap}   \begin{align*}
                    \begin{split}
                    \iota_{K}(z_{3}) = & z_{5} \muv \\
                    \iota_{K}(z_{5}) = & z_{3} \muv
                    \end{split}
                    \end{align*}
        \end{enumerate}
    \noindent All of these choices are equivalent up to some appropriate change of basis; so we can conclude that \ref{zsmap} is our map, without loss of generality. We note that in Case Two, we may also have that $g$ appears in the image of $\iota_{K}(z_{3})$ or $\iota_{K}(z_{5})$; the argument above demonstrates how $g$ appears in the image of $\iota_{K}(z_{3})$ if, and only if, it appears in the image of $\iota_{K}(z_{5})$. Thus our claim is proved.

 \item \label{claimA3} \textbf{Claim}: $$\iota_{K}(g) = g \muv$$
               OR
			 $$\iota_{K}(g) = g + z_{3} + z_{5} \muv$$
	We note that this is the first significant place that Lemma \ref{lem:caseone} and Lemma \ref{lem:casetwo} differ. In Case Two, we have that $z_{3}$ and $z_{5}$ have the same Alexander grading as $g$, and so we cannot definitively say that they do not appear in $\iota_{K}(g) \muv$, but we can say that $z_{3}$ appears in $\iota_{K}(g) \iff z_{5}$ does. Note that the proof that $\iota_{K}(g)$ is an $\mathbb{F}[\cU \cV]$-linear combination of $\{g, z_{3}, z_{5} \}$ is identical to the proof in Lemma \ref{lem:caseone}, Claim \ref{claimB} that $\iota_{K}(g)$ is an $\mathbb{F}[\cU \cV]$-linear combination of $\{g, \cU z_{3}, \cV z_{5} \}$. From there, we suppose for sake of contradiction that $\iota_{K}(g) = g + z_{3} \muv$.  Then, \textbf{Claim} \ref{claimA} combined with the homotopy equivalence to the Sarkar map implies that $\iota_{K}^{2}(g) = g + z_{3} + z_{5} = g + H(\partial g) + \partial H(g) = g + \partial H(g)$; but this cannot be true since neither $z_{3}$ nor $z_{5}$ are boundaries. It is logically identical to show that $\iota_{K}(g) \neq g + z_{5} \muv$. Finally, the fact that $g$ appears in neither $\iota_{K}(z_{3})$ nor $\iota_{K}(z_{5})$ indicates that $g$ must appear in $\iota_{K}(g)$, in order to preserve the homotopy equivalence to the Sarkar map. Thus, our claim is proved.
\item \textbf{Claim}: $$\iota_{K}(a) = a + g \muv$$ 
		OR	$$\iota_{K}(a) = a + g + z_{3} + z_{5} \muv$$
    \textit{Proof of claim:} We note that again, this Claim differs from \textbf{Claim} \ref{ClaimC} in the proof of Lemma \ref{lem:caseone} by the possible inclusion of $z_{3}$ and $z_{5}$ in the image. This is for morally the same reasoning as outlined in the proof of \ref{claimA3}, and we will deal with it in the same way. Showing that $a$ and $g$ must both appear in $\iota_{K}(a)$ is logically identical as in the proof of Lemma \ref{lem:caseone}. First, we show that $\iota_{K}(a) \neq 0$, which is precisely the same; then that $\iota_{K}(a) \neq a$ and in fact that $a$ and $g$ must appear in $\iota_{K}(a)$. In this case, as in the previous lemma, we can derive a contradiction using a linear algebra argument regarding what can appear in $\iota_{K}(d)$ and contribute a copy of $\cU g$, and what can appear in $\iota_{K}(c)$ and contribute a copy of $\cV g$. Despite the slightly different grading data in this case, the argument is morally the same; the if and only if statements mentioned in \ref{biglist} in particular follow the exact same argument and will not be replicated here. Once we have established that $a$ and $g$ both must appear in $\iota_{K}(a)$, it remains to consider what happens with $z_{3}$ and $z_{5}$; but again, using the Sarkar map and what we have shown in Claims \ref{claimA2} and \ref{claimA3} we can conclude that $z_{3}$ appears in $\iota_{K}(a)$ if and only if $z_{5}$ does.

\item It remains to again show that, should $g$ appear in the image of $\iota_{K}$ of any basis elements besides $g$ and $a$, it can be removed via a change of basis or homotoped away.

Indeed, we claim the only cases we must consider for Case Two are if $g$ appears in $\{\cU c, \cV d, x_{3}, x_{5}, z_{3}, z_{5} \}$; this follows from the fact that $g$ is not a boundary, and by argument similar to that used in \ref{b2b2}. 
    \item We can use the exact same skew-homotopy as in Case One to simultaneously eliminate $\cU g$ from the image of $\iota_{K}(c)$ and $\cV g$ from the image of $\iota_{K}(d)$; namely, the skew-homotopy defined by $H(b) = g$, $H(*) = 0$ for $*$ any other basis element.
    \item We have showed above in \ref{claimA2}  that $g$ appears in $\iota_{K}(z_{5}) \iff g$ appears in $\iota_{K}(z_{3})$. Then, the change of basis we require to remove $g$ from their images is:
        \begin{align*}
            z_{3} & \mapsto z_{3}^{'} = z_{3} + a \\
            z_{5} & \mapsto z_{5}^{'} = z_{5} + a \\
        \end{align*}
    \item We can mimic the same proof technique as used several times above to show that, using grading data and the Sarkar map, that $g$ appears in $\iota_{K}(x_{5}) \iff g$ appears in $\iota_{K}(x_{3})$. Then, the change of basis we require to remove $g$ from their images is:
        \begin{align*}
            x_{3} & \mapsto x_{3}^{'} = x_{3} + a \\
            x_{5} & \mapsto x_{5}^{'} = x_{5} + a \\
        \end{align*}
\end{enumerate}
\end{proof}
\section{The Mazur pattern of $4_{1}$ cannot have finite order in $\mathcal{C}$}
\label{sec:5final}

\subsection{Local equivalence}

We are now ready to prove \ref{prop:myknots}, which in turn immediately implies \ref{theorem:main}.
\begin{proof}
\label{locequiv}
Denote by $\mathcal{C}_{0}$ the almost-$\iota_{K}$-complex of the unknot, and let $\eta$ be the lone generator of $\mathcal{C}_{0}$. Note that $\iota_{K}$ acts on $\mathcal{C}_{0}$ in the obvious way, namely, $\iota_{K}(\eta) = \eta \muv$. Now, let $\mathcal{C}$ denote the almost-$\iota_{K}$-complex of the Mazur pattern of the figure eight knot, as described in Section \ref{sec:4invol}. We claim that there cannot exist an almost local equivalence $f$ such that 

$$f: \mathcal{C}_{0} \rightarrow \bigotimes_{i=1}^{N} \mathcal{C}$$

In particular, we will show that if an $f$ exists which satisfies $f\iota_{K} = \iota_{K}f \muv$ and which induces an isomorphism $f_{*}: H_{*}((\cU, \cV)^{-1}\mathcal{C}_{0}) \rightarrow H_{*}((\cU, \cV)^{-1}\bigotimes_{i=1}^{N} \mathcal{C})$, then such an $f$ fails to satisfy $f \omega \simeq \omega f$ where $\omega$ is the map $\omega = 1 + \iota_{K}$. This proof is a variation on that used in the paper \cite{HKPS_2020}, which was used to prove a similar result.

To begin, we note that $\eta$ is a cycle which generates the localized homology of $\mathcal{C}_{0}$, and thus $f(\eta)$ will generate the localized homology of $\bigotimes_{i=1}^{N} \mathcal{C}$. In particular, since $a \otimes a \otimes \cdots a \otimes a$ is a cycle, then

$$f(\eta) = a\otimes a \otimes \cdots a \otimes a + \lambda$$

\noindent where $\lambda$ is some linear combination of elements $\lambda_{i}$ which each satisfy that $\cU^{j} \lambda_{i}$ is a boundary for some $j >> 0$. For example, though $g$ is itself not a boundary of our chain complex, a priori $g$ may appear in $\lambda$. Next, we suppose for sake of contradiction that $f \omega \simeq \omega f$; specifically, we assume there exists some homotopy $H$ such that $f \omega = \omega f + H \partial + \partial H$. Then, for $\eta$ specifically, we have the equivalence

    \begin{equation} \label{eqn1}
                \begin{split}
                f \omega(\eta) & = \omega f(\eta) + H(\partial \eta) + \partial H(\eta) \\
                & = \omega f(\eta) \partial H(\eta)
                \end{split}
    \end{equation}
The left hand side becomes $f \omega(\eta) = f (1 + \iota_{K})(\eta) = f(\eta + \eta) = f(0) = 0$, which tells us that $\omega f(\eta)$ must be a sum of boundaries. Now, 

\begin{align*}
    \begin{split}
    \omega f (\eta) & = \omega (a\otimes a \otimes \cdots a \otimes a + \lambda) \\
    & = (1 + \iota_{K})(a\otimes a \otimes \cdots a \otimes a + \lambda) \\
    & = (a\otimes a \otimes \cdots a \otimes a + \lambda) + \iota_{K}(a\otimes a \otimes \cdots a \otimes a + \lambda) \\
    & = (a\otimes a \otimes \cdots a \otimes a + \lambda) + (a + g) \otimes (a + g) \otimes \cdots \otimes (a + g) + \iota_{K}(\lambda) \\
    & = (a\otimes a \otimes \cdots a \otimes a + \lambda) + g \otimes a \otimes \cdots \otimes a + \cdots + \iota_{K}(\lambda)
    \end{split}
\end{align*}

\noindent Then, there is at least one copy of $g \otimes a \otimes a \otimes \cdots \otimes a$ that appears in $\omega f(\eta)$; it remains to show that there is exactly one copy. In particular, we must show it is not canceled by anything in $\omega(\lambda)$. Indeed, $g \otimes a \otimes a \otimes \cdots \otimes a$ would only appear in $\lambda$ if $g$ appears in the image of some other basis element in a way that cannot be homotoped away; but as shown in \ref{lem:caseoneC} and \ref{lem:casetwoC}, this cannot happen. Thus, $0 = f\omega(\eta) \neq \omega f(\eta) + H(\partial \eta) + \partial H(\eta)$, and so $f \omega \not\simeq \omega f$, and $f$ fails to be a chain map.
\end{proof}

Upcoming work will address \ref{conj}; the proof that all iterates of $Q^{n}(4_{1})$ are infinite order is analogous to the argument herein; iterating $Q$ on the knot complex discussed in this paper lengthens each leg of the $L$-shaped complex by $1$, and generates other shapes that we have already dealt with. In particular, we have an odd number of acyclic complexes with the new $L$-shaped complex the unique summand of this shape. The proof that these iterates are linearly independent in $\mathcal{C}$ will require using a local equivalence argument to prove no two complexes are linear combinations of each other, an extension of the proof technique in \ref{locequiv}.

Recent work of \cite{Hedden_2021} tells us that the subgroup in $\mathcal{C}$ generated by the image of any pattern with winding number $w \neq 0$ will be infinite rank; upcoming work will reprove this in the case of $Q$, with an eye towards extending proof techniques to consider patterns with $w = 0$. 
\newpage

\bibliographystyle{amsalpha}
\bibliography{6paper.bib}

\end{document}